\titleformat{\section}{\vskip10pt\large\bfseries}{\thesection.}{0.5em}{\centering\vspace{5pt}}
\titleformat{\subsection}{\vskip10pt\normalsize\bfseries}{\thesubsection.}{0.5em}{}
\newtheorem{theorem}{Theorem}[section]
\newtheorem{lemma}[theorem]{Lemma}
\newtheorem{proposition}[theorem]{Proposition}
\newtheorem{remark}[theorem]{Remark}
\theoremstyle{definition}
\def\R{\mathbb{R}}
\def\Z{\mathbb{Z}}
\def\T{\mathbb{T}}
\def\C{\mathbb{C}}
\def\l{\langle}
\def\r{\rangle}
\newcommand{\fe}{\mathrm{e}}
\newcommand{\bZ}{{\mathbb Z}}
\numberwithin{equation}{section}
\begin{document}

\title[]{A modified splitting method for the cubic nonlinear Schr\"odinger equation}

\author[]{\,\,Yifei Wu}
\address{\hspace*{-12pt}Yifei Wu: Center for Applied Mathematics, Tianjin University, 300072, Tianjin, P. R. China.}
\email{yerfmath@gmail.com}

\subjclass[2010]{65M12, 65M15, 35Q55}


\keywords{Nonlinear Schr\"{o}dinger equation, numerical solution, first-order convergence, low regularity, fast Fourier transform}

\maketitle

\begin{abstract}\noindent
As a classical time-stepping method, it is well-known that the Strang splitting method reaches the first-order accuracy  by losing two spatial derivatives.

In this paper, we propose a modified splitting method for the 1D cubic nonlinear Schr\"odinger equation: 
\begin{align*} 
u^{n+1}=\fe^{i\frac\tau2\partial_x^2}\widetilde{\mathcal N}_\tau
\left[\fe^{i\frac\tau2\partial_x^2}\big(\Pi_\tau +\fe^{-2\pi i\lambda M_0\tau}\Pi^\tau \big)u^n\right],
\end{align*}
with 
$\widetilde{\mathcal N}_t(\phi)=\fe^{-i\lambda t|\Pi_\tau\phi|^2}\phi,$ and $M_0$ is the mass of the initial data.  

Suitably choosing the filters $\Pi_\tau$ and $\Pi^\tau$, it is shown rigorously that it  reaches the first-order accuracy  by only losing  $\frac32$-spatial derivatives. 
Moreover, if $\gamma\in (0,1)$,  the new method presents the convergence rate of $\tau^{\frac{4\gamma}{4+\gamma}}$ in $L^2$-norm for the $H^\gamma$-data; if $\gamma\in [1,2]$,  it presents the convergence rate of $\tau^{\frac25(1+\gamma)-}$ in $L^2$-norm for the $H^\gamma$-data.
These results are better than the expected ones for the standard  (filtered)  Strang  splitting methods.
Moreover, the mass is conserved:  
$$
 \frac1{2\pi}\int_\T |u^n(x)|^2\,d x\equiv M_0, \quad n=0,1,\ldots, L.
$$

The key idea is based on the observation that   the low frequency and high frequency components of solutions are almost separated (up to some smooth components). Then  the algorithm is constructed by tracking the solution behavior at  the low  and high frequency components separately.

\end{abstract}


\section{Introduction}\label{sec:introduction}

This article concerns the numerical solution of the cubic nonlinear Schr\"odinger (NLS) equation  
\begin{equation}\label{model}
 \left\{\begin{aligned}
& i\partial_tu(t,x)
=\partial_{xx} u(t,x)+\lambda|u(t,x)|^2u(t,x)
 &&\mbox{for}\,\,\, x\in\T\,\,\,\mbox{and}\,\,\, t\in(0,T] , \\
 &u(0,x)=u^0(x) &&\mbox{for}\,\,\,  x\in\T,
 \end{aligned}\right.
\end{equation}
on the one-dimensional torus $\T=(-\pi,\pi)$ with the initial value $u^0\in H^\gamma(\T),\gamma\ge 0$, where $\lambda=1$ and $-1$ are referred to as the focusing and defocusing cases, respectively. It is known that problem \eqref{model} is globally well-posed in $H^\gamma(\T)$ for $\gamma\ge 0$; see \cite{Bo}.

The $L^2$-solution of the NLS equation satisfies the following mass conservation law:
\begin{align}
\frac1{2\pi}\int_\T |u(t,x)|^2\,d x = \frac1{2\pi}\int_\T |u^0(x)|^2\,d x\triangleq M_0  \quad\mbox{for}\,\,\, t>0 .  \label{mass}
\end{align}





We denote  the flow $\mathcal N_t$ to be 
$$
\mathcal N_t(\phi)=\fe^{-i\lambda t|\phi|^2}\phi,
$$
which solves the equation 
\begin{equation*}
 \left\{\begin{aligned}
& i\partial_tu(t,x)
=\lambda|u(t,x)|^2u(t,x),
 && \\
 &u(0,x)=\phi(x). &&
 \end{aligned}\right.
\end{equation*}
Moreover, denote $\fe^{i\tau\partial_x^2}$ to be the linear flow.

There are two classical  time-stepping method, namely, the Lie splitting method:
$$
\left[\fe^{i\tau\partial_x^2}\mathcal N_\tau\right]^n;
$$
and Strang splitting method:
$$
\left[\fe^{i\frac\tau2\partial_x^2}\mathcal N_\tau\fe^{i\frac\tau2\partial_x^2}\right]^n.
$$
See Strang \cite{Strang-1968} and the references therein. 

The traditional regularity assumption for the NLS equation for the time-stepping method, including the Strang splitting method and the Lie splitting method,  to have the first-order convergence in $H^{\gamma}(\T^d)$ is $u^0\in H^{\gamma+2}(\T^d)$ for $\gamma\ge 0$ (losing two derivatives).  In particular, Besse, Bid\'egaray, and Descombes \cite{besse} proved the statement for globally Lipschitz-continuous nonlinearities. Later, Lubich \cite{Lubich-2008} firstly analyzed the Strang method in the cubic nonlinearity case and also the Schr\"odinger-Poisson equation.  Further, 
Ignat \cite{Ignat-2011} (see also \cite{IgnatZua-2009}) originally introduced a filtered splitting method which can prove the statement for the power type nonlinearity: $|u|^pu, 1\le p<\frac4d, d\le 3$.  Recently, Ostermann, Rousset and Schratz \cite{ORS-2022}  proved that for 1D cubic NLS,  the filtered Lie splitting has $\frac\tau2$-order convergence in $L^2$ for the $H^{\gamma}(\T)$ initial data when $\gamma\in (0,2]$.

\subsection{Our main result}
To present  the new method, we define the filters which were introduced in \cite{Ignat-2011,IgnatZua-2009}. We denote by $\Pi_{\le N}: L^2(\T)\rightarrow L^2(\T)$ the low frequency projection operator defined by 
$$
\mathcal{F}_k[\Pi_{\le N} f]=
\left\{
\begin{aligned}
&\hat f_k &&\mbox{for}\,\,\, |k|\le N, \\
&0 &&\mbox{for}\,\,\, |k|> N ,
\end{aligned}
\right.
$$
and $\Pi_{> N}=\mathbb I-\Pi_{\le N}$. Here we denote by $\mathcal{F}_k[f]$ and $\hat f_k$ the $k$th Fourier coefficient of the function $f$.

Previously, the authors in  \cite{Ignat-2011,IgnatZua-2009} projected into the low frequencies to assure stability and guarantee convergence, and proposed the filtered splitting method:
$$
\left[\Pi_{\le N}\fe^{i\frac\tau2\partial_x^2}\mathcal N_\tau\>\fe^{i\frac\tau2\partial_x^2}\Pi_{\le N} \right]^n,\quad 
\mbox{with } N=\tau^{-\frac12}.
$$

Our main observation is that the solution at the low frequency and high frequency  has almost independent behaviors. 
Indeed,  by the structural Lemma \ref{lem:tri-est} below,  we can split the equation \eqref{model} into  a coupled system as follows:
\begin{equation}\label{model}
 \left\{\begin{aligned}
& i\partial_tu_{\le N}
=\partial_{xx} u_{\le N}+\lambda|u_{\le N}|^2u_{\le N}+\mathcal R_S^1(u_{\le N},u_{> N}),\\
&  i\partial_tu_{> N}
=\partial_{xx} u_{> N}+4\pi \lambda M_0 u_{> N}+\mathcal R_S^2(u_{\le N},u_{> N}),
 \end{aligned}\right.
\end{equation}
where $u_{\le N}=\Pi_{\le N} u, u_{> N}=\Pi_{> N} u$, and the coupled terms  $\mathcal R_S^1(u_{\le N},u_{> N})$ and $\mathcal R_S^2(u_{\le N},u_{> N})$ are smoother than the original nonlinearity. 
 
Dropping the smooth terms $\mathcal R_S^1$ and $\mathcal R_S^2$,  one may find that  the low frequency and high frequency components are separated. 
This enlightens us to track the low and the high frequency components separately, and  different splitting schemes
are proposed accordingly:  
\begin{align}\label{low-high-scheme}
\Pi_{\le N}\fe^{i\frac\tau2\partial_x^2}\mathcal N_\tau\big[\fe^{i\frac\tau2\partial_x^2}\Pi_{\le N}\big]^n;\quad 
\left[\Pi_{> N}\fe^{i\tau\partial_x^2-4\pi i\lambda M_0\tau}\right]^n.
\end{align}
Note that the two schemes defined in \eqref{low-high-scheme} are  mutually independent, and  approximate the $\Pi_{>N}$-part and $\Pi_{\le N}$-part of the truth solution, respectively. 
After suitably choosing the parameter $N$, we obtain a better error estimate than the standard (filtered) Lie or Strang  splitting method. 

%

More precisely,  define 
\begin{align}
\Pi_\tau= \Pi_{\le N}, \mbox{ with } \left\{
\begin{aligned}
&N= \tau^{-\frac{2}{4+\gamma}}, &\quad \mbox{for}\,\,\, \gamma\in (0,1), \\
&N= \tau^{-\frac25},\quad  &\quad\mbox{for}\,\,\,\gamma\in [1,2];
\end{aligned}
\right.
\qquad 
\Pi^\tau=\mathbb I-\Pi_\tau.
\end{align}
For any positive integer $L$, let $t_n=n\tau$, $n=0,1,\dots,L$, be a partition of the time interval $[0,T]$ with stepsize $\tau=T/L$. Then the splitting schemes we propose are 
\begin{align}\label{NuSo-NLS-1}
u^{n+1}=\Pi^\tau\big[\fe^{i\tau\partial_x^2-4\pi i\lambda M_0\tau}u^n\big]+\Pi_\tau\fe^{i\frac\tau2\partial_x^2}\mathcal N_\tau\big[\fe^{i\frac\tau2\partial_x^2}\Pi_\tau u^n\big], \quad n=0,1,\ldots, L - 1.
\end{align}

Furthermore, since 
$$
\fe^{i\frac\tau2\partial_x^2-i\lambda \tau\big|\fe^{i\frac\tau2\partial_x^2}\Pi_\tau u^n\big|^2+2\pi i\lambda M_0\tau}\fe^{i\frac\tau2\partial_x^2}\Pi^\tau u^n,
\quad \mbox{and }\quad 
\Pi^\tau\fe^{i\frac\tau2\partial_x^2-i\lambda \tau \big|\fe^{i\frac\tau2\partial_x^2}\Pi_\tau u^n\big|^2}\fe^{i\frac\tau2\partial_x^2}\Pi_\tau u^n
$$
are smooth terms (similar as $\mathcal R_S$ above), we can slightly modify \eqref{NuSo-NLS-1} and define  a new scheme as 
\begin{align} \label{NuSo-NLS-2}
u^{n+1}=\fe^{i\frac\tau2\partial_x^2}\widetilde{\mathcal N}_\tau
\left[\fe^{i\frac\tau2\partial_x^2}\big(\Pi_\tau +\fe^{-2\pi i\lambda M_0\tau}\Pi^\tau \big)u^n\right], \quad n=0,1,\ldots, L - 1,
\end{align}
where  the flow $\widetilde{\mathcal N}_t$ is defined by  
$$
\widetilde{\mathcal N}_t(\phi)=\fe^{-i\lambda t|\Pi_\tau\phi|^2}\phi.
$$


For any $a\in\R$,  we denote $a\pm=a\pm\epsilon$ for arbitrary small $\epsilon>0$.  
The main theoretical result of this paper is the following theorem.

\begin{theorem}\label{the:main}
Let $\gamma\in (0,2]$. If $u^0\in H^\gamma(\T)$, then there exist positive constants $\tau_0$  and $C$ such that for $\tau\leq\tau_0$ and $\tau N\le 1$, the numerical solution given by \eqref{NuSo-NLS-1} or \eqref{NuSo-NLS-2} has the following error bound:
\begin{itemize}
\item[(1)] If $\gamma\in (0,1)$,  then 
\begin{equation}\label{error-tau-1}
\max_{1\le n\le L}  \|u(t_n)-u^{n}\|_{L^2}
  \le C\tau^{\frac{4\gamma}{4+\gamma}};
\end{equation} 
\item[(2)] If $\gamma\in [1,2]$,  then 
\begin{equation}\label{error-tau-2}
\max_{1\le n\le L}  \|u(t_n)-u^{n}\|_{L^2}
  \le C\tau^{\frac25(1+\gamma)-},
\end{equation} 
\end{itemize}
where the constants $\tau_0$ and $C$ depend only on $T$ and $\|u^0\|_{H^\gamma}$. 
\end{theorem}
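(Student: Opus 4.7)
My plan is to exploit the complete decoupling of low and high frequencies built into the scheme. Writing $\varepsilon^n = u(t_n) - u^n$ and splitting it as $\Pi_\tau\varepsilon^n + \Pi^\tau\varepsilon^n$, direct inspection of \eqref{NuSo-NLS-1} shows that $\Pi_\tau u^{n+1}$ depends only on $\Pi_\tau u^n$ through the filtered Strang operator $\Pi_\tau \fe^{i\tau\partial_x^2/2}\mathcal N_\tau \fe^{i\tau\partial_x^2/2}\Pi_\tau$, whereas $\Pi^\tau u^{n+1}$ depends only on $\Pi^\tau u^n$ through the phase-shifted linear propagator $\fe^{i\tau\partial_x^2 - 4\pi i\lambda M_0\tau}\Pi^\tau$. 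By Lemma \ref{lem:tri-est}, the exact solution admits a parallel decomposition: $\Pi_\tau u$ solves a filtered cubic NLS perturbed by $\Pi_\tau \mathcal R_S^1$, and $\Pi^\tau u$ solves the phase-shifted linear equation perturbed by $\Pi^\tau \mathcal R_S^2$, with both remainders smoother than a generic cubic term. The scheme \eqref{NuSo-NLS-2} differs from \eqref{NuSo-NLS-1} only by similarly smoother corrections, so it can be handled by the same argument at the end; mass conservation of the discrete flow provides a cost-free a priori $L^2$-bound on $u^n$.

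For the low-frequency error I would carry out the classical Strang local-error analysis on the exact flow of $\Pi_\tau u$. A Taylor expansion of $\fe^{i\tau A/2}\fe^{i\tau B}\fe^{i\tau A/2}-\fe^{i\tau(A+B)}$ with $A=i\partial_x^2$ and $B=-i\lambda|\phi|^2\phi$ gives a one-step error of size
\begin{equation*}
\lesssim \tau^{3}\|\Pi_\tau u\|_{H^2}^{p} + \tau^2\|\Pi_\tau \mathcal R_S^1\|_{L^2}
\end{equation*}
for an admissible power $p$; using the Bernstein-type bound $\|\Pi_\tau u\|_{H^2}\lesssim N^{2-\gamma}\|u\|_{H^\gamma}$ controls the first term by $\tau^3 N^{(2-\gamma)p}$, while the second is smaller thanks to the structural smoothing of $\mathcal R_S^1$.

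For the high-frequency error I would write Duhamel in the phase-shifted frame,
\begin{equation*}
\Pi^\tau u(t_{n+1}) = \fe^{i\tau\partial_x^2 - 4\pi i\lambda M_0 \tau}\Pi^\tau u(t_n) - i\int_0^\tau \fe^{i(\tau-s)(\partial_x^2 - 4\pi \lambda M_0)}\Pi^\tau \mathcal R_S^2(t_n + s)\,ds,
\end{equation*}
and note that the scheme simply drops the integral. Using the smoothing of $\mathcal R_S^2$ from Lemma \ref{lem:tri-est}, sharpened if needed by an integration by parts in time against the non-resonant oscillation $\fe^{is(\partial_x^2-4\pi\lambda M_0)}$, I expect a one-step error of size $\tau^{1+\alpha}N^{-\beta}$ with exponents dictated by the lemma.

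The global error then follows by summing $L=T/\tau$ one-step errors; $L^2$-stability is essentially free because $\fe^{i\tau\partial_x^2/2}$, $\mathcal N_\tau$, and $\widetilde{\mathcal N}_\tau$ are $L^2$-isometries, the remaining Lipschitz dependence being handled by a discrete Grönwall argument after linearizing around the exact solution. Balancing the low-frequency contribution $\tau^{2}N^{(2-\gamma)p}$ (increasing in $N$) against the high-frequency contribution $\tau^{\alpha}N^{-\beta}$ (decreasing in $N$) and optimizing yields the prescribed choices $N=\tau^{-2/(4+\gamma)}$ for $\gamma\in(0,1)$ and $N=\tau^{-2/5}$ for $\gamma\in[1,2]$, giving the rates in \eqref{error-tau-1} and \eqref{error-tau-2}. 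The main obstacle is the quantitative high-frequency step: improving over the usual filtered-Strang rate $\tau^{\gamma/2}$ hinges on extracting from Lemma \ref{lem:tri-est} the precise smoothing of $\mathcal R_S^2$, which itself relies on the non-resonant trilinear cancellation absorbed into the mass-shift term $-4\pi\lambda M_0 u_{>N}$ rather than on Sobolev-algebra estimates alone.
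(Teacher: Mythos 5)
Your overall geometry is right -- low/high frequency decoupling, smoother coupling terms via Lemma \ref{lem:tri-est}, and an optimization in $N$ -- but two steps in the middle would not survive, and they are exactly where the paper has to work hardest. First, the claim that ``$L^2$-stability is essentially free'' and that the remaining Lipschitz dependence can be absorbed by a discrete Gr\"onwall argument is the genuine gap. The one-step Lipschitz constant of $\mathcal N_\tau$ (or $\widetilde{\mathcal N}_\tau$) on $L^2$ is of size $\exp(C\tau\|\cdot\|_{L^\infty}^2)$, and with only the conserved $L^2$ mass plus Bernstein you get $\|\Pi_\tau u^n\|_{L^\infty}^2\lesssim N$, so summing over $L\sim\tau^{-1}$ steps produces a factor $\fe^{CTN}$, which is useless; and for $\gamma\le\frac12$ there is no embedding that lets you trade $H^\gamma$ control of the \emph{exact} solution for pointwise control of the \emph{numerical} one (whose $H^\gamma$ norm is not conserved). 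This is precisely the ``stability issue'' the paper flags in Remark \ref{Remark}(2). The paper does not run a Lady-Windermere summation at all: it builds continuous interpolants $\mathcal U_{\le N},\mathcal U_{>N}$ of the numerical solution satisfying a perturbed Duhamel equation, measures the error in the Strichartz-type working norms $\mathcal X$ and the discrete space-time norm $\mathcal Y([0,T])=\tau^{1/6}\|J^{\gamma-}\Pi_{\le N}u^n\|_{l^6_nL^6_x}$ of the numerical solution itself, closes these by a bootstrap on short intervals where the exact solution is small in \eqref{solution-est-small}, and then iterates over $K\sim T^*/\delta_0$ intervals (Sections \ref{sec:Continuation}--\ref{proof-of-Theorem}). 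Your proposal has no mechanism to control the numerical solution in any norm stronger than $L^2$, yet every one of your local-error bounds (cubic terms) needs such control.

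Second, the quantitative consistency estimate is asserted rather than derived, and as stated it is not correct. The $O(\tau^3)$ Strang defect for cubic NLS comes from double commutators and costs four derivatives, not two, so the one-step bound should read $\tau^3\|\Pi_\tau u\|_{H^4}$-type, i.e.\ $\tau^3N^{4-\gamma}$ after Bernstein (globally $\tau^2N^{4-\gamma}$, which is the term $\tau^2N^{4-\gamma}$ in Proposition \ref{prop:main}); a bound $\tau^3\|\Pi_\tau u\|_{H^2}^p$ with the exponents you would need to recover the paper's choices of $N$ does not come out of the Taylor expansion you describe. Likewise, on the high-frequency side the error is not only the smoothed remainder $\mathcal R_S^2$ from Lemma \ref{lem:tri-est} (which yields $N^{-\min\{2\gamma,\alpha(\gamma)\}}$); there is also the commutation of the frozen mass phase with the time variation of the twisted variable, the term $4\pi i\lambda M_0\int\Pi_{>N}(v(s)-v(t_n))\,ds$, which is estimated via Lemma \ref{lem:est-vt-2} and contributes the third balanced quantity $\tau N^{-\gamma+}$. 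Without identifying the full triple $N^{-\min\{2\gamma,\alpha(\gamma)\}}+\tau N^{-\gamma+}+\tau^2N^{4-\gamma}$, the ``optimization yields the prescribed $N$'' step is circular: you are fitting the answer rather than deriving the rates \eqref{error-tau-1}--\eqref{error-tau-2}.
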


The following are some remarks for the main theorem. 
\begin{remark}\label{Remark}
{\upshape
(1) We have the uniform boundedness of the mass for the numerical solution by \eqref{NuSo-NLS-1}:
$$
\big\|u^n\big\|_{L^2_x}\le \big\|u^0\big\|_{L^2_x},  \quad n=0,1,\ldots, L - 1.
$$
and the conversed mass for the numerical solution by \eqref{NuSo-NLS-2}:
$$
\big\|u^n\big\|_{L^2_x}= \big\|u^0\big\|_{L^2_x},  \quad n=0,1,\ldots, L - 1.
$$
The latter one can not be  guaranteed by the filtered splitting method, see \cite{Ignat-2011,ORS-2022}. 

(2) If $\gamma\in (0,1)$,  
then the convergence rate obtained in Theorem \ref{the:main}  is  $\tau^{\frac{4\gamma}{4+\gamma}}$. 
This is better than $\tau^\frac \tau 2$, which is the expected convergence rate for the standard  (filtered) Lie or Strang  splitting method. In particular, for $H^1$-data, the convergence rate   is  $\tau^{\frac{4}5-}$. 

Moreover, as shown in \cite{ORS-2022}, there is a ``stability issue'' in the low regularity case: $0<\gamma\le \frac12$. In \cite{ORS-2022}, the authors overcome the difficulty by utilizing the discrete Bourgain spaces. 
In the present paper, we overcome it by the perturbation argument introduced in \cite{LiWu-2022}. Moreover, we  develop some new iteration idea to close the estimates in any long time.

(3) For the first-order convergence, the requirement of the regularity for the new scheme \eqref{NuSo-NLS-1} is $u_0\in H^{\frac32+}(\T)$, which is lower than that of the standard Strang splitting method: $u_0\in H^2(\T)$. 

While if $\gamma=2$,   Theorem \ref{the:main} yields the  convergence estimate: 
\begin{equation}\label{error-N-1}
\max_{1\le n\le L}  \|u(t_n,\cdot)-u^{n}\|_{L^2}
  \le C\tau^{\frac65-},
\end{equation} 
which is better than the expected first-order convergence for the standard Strang splitting method. 
}
\end{remark}

%
%
%

\subsection{Further discussions}

We believe that our argument also applicable for the Cauchy problem in whole space  and the other nonlinearities.  
Following, we give some claims for the sharp regularity condition. 

\begin{itemize}

\item{\bf Claim 1: $\frac32$-derivatives loss for first-order convergence}

One may further consider more general convergence estimate.  Then we claim that if $u^0\in H^{\gamma+\frac32},\gamma\ge 0$, then the numerical solution given by \eqref{NuSo-NLS-1} or \eqref{NuSo-NLS-2} has the following first-order convergent bound:
\begin{equation*} 
\max_{1\le n\le L}  \|u(t_n)-u^{n}\|_{H^\gamma}
  \le C\tau. 
\end{equation*} 
That is, it has the first-order convergence in $H^{\gamma}(\T)$ is $u^0\in H^{\gamma+\frac32}(\T)$ for $\gamma\ge 0$ (losing $\frac32$ derivatives).

\item{\bf Claim 2: Schemes for the general power-type nonlinearity}

One may further consider the numerical solution of the following nonlinear Schr\"odinger  equation: 
\begin{equation*}
 \left\{\begin{aligned}
& i\partial_tu(t,x)
=\partial_x^2 u(t,x)+\lambda|u(t,x)|^{2p}u(t,x)
 &&\mbox{for}\,\,\, x\in\T\,\,\,\mbox{and}\,\,\, t\in(0,T] , \\
 &u(0,x)=u^0(x) &&\mbox{for}\,\,\,  x\in\T,
 \end{aligned}\right.
\end{equation*}
with $p\in \Z^+$. 

We define  the modified time-splitting method: Let $L=\frac T\tau$,  for $n=0,1\ldots, L - 1$, 
\begin{align*}
u^{n+1}=\fe^{i\frac\tau2\partial_x^2}\widetilde{\mathcal N}_{\tau,p}
\left[\fe^{i\frac\tau2\partial_x^2}\big(\Pi_\tau +\fe^{-2\pi i p \tau \Pi_0(|u^n|^{2p}) }\Pi^\tau \big)u^n\right],
\end{align*}
where  the flow $\widetilde{\mathcal N}_{t,p}$ and $\Pi_0$ are  defined by  
$$
\widetilde{\mathcal N}_{t,p}(\phi)=\fe^{-i\lambda t|\Pi_\tau\phi|^{2p}}\phi,\quad 
\Pi_0(f)=\frac1{2\pi} \int_\T f(x)\,dx.
$$
Then we claim that such new time-splitting method presents the first-order convergence in $H^{\gamma}(\T)$ is $u^0\in H^{\gamma+\frac32}(\T)$ for $\gamma\ge 0$ (losing $\frac32$ derivatives).

\item{\bf Claim 3: Schemes for high dimensional cases}

One may further consider the numerical solution of the cubic nonlinear Schr\"odinger  equation  in the cases of dimensions $d\ge 2$: 
\begin{equation*}
 \left\{\begin{aligned}
& i\partial_tu(t,x)
=\Delta u(t,x)+\lambda|u(t,x)|^2u(t,x)
 &&\mbox{for}\,\,\, x\in\T^d\,\,\,\mbox{and}\,\,\, t\in(0,T] , \\
 &u(0,x)=u^0(x) &&\mbox{for}\,\,\,  x\in\T^d.
 \end{aligned}\right.
\end{equation*}

Denote $\fe^{i\tau\Delta}$ to be the linear flow, and the modified time-splitting method: Let $L=\frac T\tau$,  for $n=0,1\ldots, L - 1$, 
\begin{align*}
u^{n+1}=\Pi_{>N}\big[\fe^{i\tau\Delta-4\pi i\lambda M_0\tau}u^n\big]+\Pi_{\le N}\fe^{i\frac\tau2\Delta}\mathcal N_\tau\big[\fe^{i\frac\tau2\Delta}\Pi_{\le N}u^n\big].
\end{align*}
Then we claim that by suitably choosing the parameter $N$,  this new time-splitting method presents a higher-order convergence rate than the standard Strang splitting or Lie splitting method under the same regularity condition. 

%
%
%

 \end{itemize}

The proofs of the claims seem be more complicated, however, these will be an interesting analysis works for the further study. 
 
\subsection{Organization}
The rest of this article is organized as follows. Some notations and technical tools are presented in section \ref{section:tool}. In particular, a crucial trilinear estimate is given in this section. Construction of the numerical method is given in Section \ref{section:derivation}. In Section \ref{sec:Continuation}, we construct a continuous solution $\mathcal U_{\le N}(t),\mathcal U_{> N}(t)$ such that $\mathcal U_{\le N}(t_n)=\Pi_{\le N} u^n$ and $\mathcal U_{> N}(t_n)=\Pi_{> N} u^n$ respectively for $n=0,1,\cdots, L-1$. Then  framework of the {\it perturbation argument} is presented and the working spaces are given in this section. 
The consistency estimates for the local and global errors are established in Section \ref{section:errors}.
In Section \ref{Convergence}, we give the convergence estimates when the lifetime is suitably short. The method is close the working space by the bootstrap argument.  In Section \ref{proof-of-Theorem}, we extend the estimates in short time to any long time by the iteration. 

\section{Notation and technical tools}
\label{section:tool}

In this section we introduce the basic notation and technical lemmas to be used in construction of the numerical method and the analysis of the error. 

\subsection{Notation}\label{subsec1}
We denote by $A\lesssim B$ or $B\gtrsim A$ the statement 
$A\leq CB$ for some constant $C>0$.  
The value of $C$ may depend on $T$ and $\|u^0\|_{H^\gamma}$, and may be different at different occurrences, but is always independent of $\tau$, $N$ and $n$. 
The notation $A\sim B$ means that $A\lesssim B\lesssim A$. 
We denote by $O(Y)$ any quantity $X$ such that $X\lesssim Y$.  Denote $a_+=\max\{a,0\}$, and $a\pm=a\pm\epsilon$ for arbitrary small $\epsilon>0$. 

The inner product and norm on $L^2(\T)$ are denoted by  
$$
( f,g ) = 
\int_\T f(x) \overline{g(x)}\,dx
\quad\mbox{and}\quad
\|f\|_{L^2}= \sqrt{( f,f )} ,\,\,\,\mbox{respectively}. 
$$
The norm on the Sobolev space $H^s(\T)$, $s\in\R$, is denoted by 
$$
\big\|f\big\|_{H^s}^2
=2\pi\sum_{k\in \Z}(1+ k ^2)^s |\hat{f}_k|^2 . 
$$
For a function $f:[0,T]\times\T\rightarrow \C$ we denote by $\|f\|_{L^p_tH^\gamma_x(I)}$ its space-time Sobolev norm, defined by 
$$
\|f\|_{L^p_tH^\gamma_x(I)}
= 
\left\{\begin{aligned}
&\bigg(\int_I \|f(t)\|_{H^s}^p d t\bigg)^{\frac{1}{p}} && \mbox{for}\,\,\, p\in[1,\infty) ,\\[5pt] 
&{\rm ess\!}\sup \limits_{t\in I\,\,\,}\! \|f(t)\|_{H^s} && \mbox{for}\,\,\, p=\infty. 
\end{aligned}\right. 
$$
Similarly, since we always restrict the variable on $x\in\T$, we denote 
$$
\|f\|_{L^r_x}=\|f\|_{L^r_x(\T)};\quad 
\|f\|_{L^q_tL^r_x(I)}=\left\|\big\|f\big\|_{L^r_x(\T)}\right\|_{L^q_t(I)}.
$$

Let a smooth cut-off function $\eta\in C_0^\infty(\R)$ be real-valued and even,  such that $\eta(t)=1$ if $|t| \le1$ and $\eta(t)=0$ if $|t|>2$. 


The Fourier coefficients of a function $f$ on $\T$ are denoted by $\mathcal{F}_k[f]$ or simply $\hat{f}_k$, defined by 
$$
\hat{f}_k = \frac{1}{2\pi}\int_{\T}  \fe^{- i   kx }f( x )\,d x \quad\mbox{for}\,\,\, k\in\bZ .
$$
The Fourier inversion formula is given by 
$$
f( x )=\sum_{k\in \Z} \hat{f}_k \fe^{ i  kx } .
$$
The Fourier coefficients are known to have the following properties:
\begin{align*}
\begin{aligned}
\|f\|_{L^2}^2
 & =2\pi \sum\limits_{k\in \Z}\big|\hat{f_k}\big|^2 && \mbox{(Plancherel identity)}; \\
\mathcal{F}_k[fg]  &=\sum\limits_{k_1\in\Z}
  \hat{f}_{k - k _1}\hat{g}_{k _1}  && \mbox{(Convolution)}.
  \end{aligned}
\end{align*}

For abbreviation, we denote 
$$\l k \r= (1+k^2)^{\frac{1}{2}} \quad\mbox{and}\quad J^s=  \l i^{-1} \partial_x \r^s ,$$
which imply that 
$$
\big\|f\big\|_{H^s}^2=\big\|J^sf\big\|_{L^2}^2 \quad\mbox{and}\quad
\widehat{(J^s f) } _k = \l k \r^s \hat f_k . 
$$
Moreover, we denote by $\partial_x^{-1}:H^s(\T)\rightarrow H^{s+1}(\T)$, $s\in\R$, the operator such that 
\begin{equation}\label{def:px-1}
\mathcal{F}_k[\partial_x^{-1}f]
=\Bigg\{ \aligned
    &(i k )^{-1}\hat{f}_k,\quad &\mbox{when }  k \ne 0,\\
    &0,\quad &\mbox{when }  k = 0.
   \endaligned
\end{equation}

%
Moreover, we denote 
\begin{align*}
\Pi_N f =\sum\limits_{k\in\Z, \frac18 N\le |k|\le 8N}\hat{f}_k   \fe^{ i  kx } .
\end{align*}


\subsection{Some technical lemmas}\label{subsec3}
We will use the following version of the Kato--Ponce inequalities, which was originally proved 
 in \cite{Kato-Ponce}
and subsequently improved to cover the endpoint case in 
\cite{BoLi-KatoPonce, Li-KatoPonce}.

\begin{lemma}[The Kato--Ponce inequalities] \label{lem:kato-Ponce} 
Let $f,g$ be the Schwartz functions. Then for $s>0$, $1<p< \infty$, and $1<p_1,p_2,p_3, p_4 < \infty$ with $\frac1p=\frac1{p_1}+\frac1{p_2}$, $\frac1p=\frac1{p_3}+\frac1{p_4}$, the following inequality holds:
\begin{align*}
  \big\|J^s(fg)\big\|_{L^p}\le C\Big( \|J^sf\|_{L^{p_1}}\|g\|_{L^{p_2}}+ \|J^sg\|_{L^{p_3}}\|f\|_{L^{p_4}}\Big),
\end{align*}
where the constant $C>0$ depends on $s,p,p_1,\cdots,p_4$. 
\end{lemma}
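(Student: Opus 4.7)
The plan is to use Bony's paraproduct decomposition combined with Littlewood--Paley theory. Let $P_j$ denote the dyadic frequency projection onto $|\xi|\sim 2^j$. Decomposing $f=\sum_j P_j f$ and $g=\sum_k P_k g$, one writes $fg = T_f g + T_g f + R(f,g)$, where $T_f g = \sum_{j<k-2} P_j f\cdot P_k g$ is the low-high paraproduct, $T_g f$ its symmetric counterpart, and $R(f,g) = \sum_{|j-k|\le 2} P_j f\cdot P_k g$ collects the diagonal (resonant) interactions. This reduces the proof of Lemma \ref{lem:kato-Ponce} to bounding each of the three pieces separately, with the two cross-terms on the right-hand side coming from $T_f g$ and $T_g f$ respectively.

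For each summand of $T_f g$ with $j<k-2$, the Fourier support of $P_j f\cdot P_k g$ lies in a shell of size $2^k$, so applying $J^s$ is equivalent (up to bounded multipliers) to multiplying by $\l 2^k\r^s$. Pulling this factor inside the sum, and then using the Littlewood--Paley square-function characterization of $L^p$ for $1<p<\infty$, the vector-valued Fefferman--Stein maximal inequality (applied to the low-frequency factor $P_j f$ summed in $j$), and H\"older with $\frac{1}{p}=\frac{1}{p_3}+\frac{1}{p_4}$, one obtains $\|J^s T_f g\|_{L^p} \lesssim \|f\|_{L^{p_4}}\|J^s g\|_{L^{p_3}}$. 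The symmetric bound $\|J^s T_g f\|_{L^p} \lesssim \|J^s f\|_{L^{p_1}}\|g\|_{L^{p_2}}$ follows identically. Both paraproducts are thus controlled with no loss and no use of the hypothesis $s>0$.

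The main obstacle is the resonant piece $R(f,g)$: its summands $P_j f\cdot P_k g$ with $|j-k|\le 2$ can have output frequency anywhere from $0$ up to $2^{k+O(1)}$, so direct frequency localization is lost. The standard remedy is to split $J^s$ dyadically over the \emph{output} frequency. For output scale $2^\ell \gtrsim 2^k$, the usual square-function argument goes through; for $2^\ell \ll 2^k$, one applies Bernstein's inequality on the output piece, trading derivatives for a gain $2^{(\ell-k)\eps}$ with $\eps\in(0,s)$ small, which produces a summable geometric series in $k-\ell$ and reveals why $s>0$ is essential. Assembling these estimates and reapplying H\"older yields both desired cross terms and closes the argument in the interior range $1<p,p_i<\infty$; the endpoint refinements in \cite{BoLi-KatoPonce,Li-KatoPonce} sharpen exactly this resonant step to push exponents to $1$ or $\infty$ via $H^1$ or $\mathrm{BMO}$, but are not required for the stated range.
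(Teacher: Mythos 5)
The paper does not actually prove Lemma \ref{lem:kato-Ponce}: it is quoted as a known result, with the original proof attributed to \cite{Kato-Ponce} (commutator/Coifman--Meyer-type techniques) and the endpoint refinements to \cite{BoLi-KatoPonce, Li-KatoPonce}. Your paraproduct sketch is therefore a genuinely different (and more self-contained) route, and it is essentially the standard modern argument: the low-high and high-low paraproducts are handled by frequency localization of the output at the high scale, the square-function characterization of $L^p$, the maximal-function bound on the low-frequency factor, and H\"older; the resonant piece is where $s>0$ enters, via the geometric gain $2^{(\ell-k)s}$ when the output scale $2^\ell$ is far below the input scale $2^k$. Two small points would need to be made explicit in a complete write-up. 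First, after gaining $2^{(\ell-k)s}$ you still must sum over $k$ at fixed output scale; the standard way is pointwise Cauchy--Schwarz in $k$ followed by H\"older and the one-sided square-function bound $\big\|\big(\sum_k |P_k g|^2\big)^{1/2}\big\|_{L^{p_2}}\lesssim \|g\|_{L^{p_2}}$, which your phrase ``reapplying H\"older'' glosses over. Second, the paper uses the inequality on $\T$, so one should either run the periodic Littlewood--Paley theory directly (the multiplier bounds needed are exactly of the type in Lemma \ref{lem:multiplier}) or invoke transference from the line; this is routine but worth a sentence. With those details supplied, your argument proves the stated non-endpoint range correctly, while the cited papers are only needed if one wants exponents pushed to $1$ or $\infty$.
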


We also need the following multiplier estimate in the periodic setting, see \cite{Lpmulitiplier-mathoverflow} for its proof.
\begin{lemma}[The multiplier estimates] \label{lem:multiplier} 
$\,$
Let $p\in (1,+\infty)$, and the operator $T$ be defined by 
$$
\widehat{(Tf)}_k=m(k) \hat f_k.
$$
Then for any $m$ satisfies that 
\begin{align}\label{cond-m}
\sum\limits_{k}|m(k+1)-m(k)\big|\le A,\quad \mbox{and}\quad \lim\limits_{k\to -\infty} m(k)=0,
\end{align}
and any $f\in L^p(\T)$, 
\begin{align*}
\|Tf\|_{L^p} \lesssim  A\|f\|_{L^p}.
\end{align*}
\end{lemma}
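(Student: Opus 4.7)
The plan is to prove this by reducing to the boundedness of the Riesz projection on $L^p(\T)$ via summation by parts. Since $\lim_{k\to-\infty} m(k) = 0$, I can write, for each $k\in\Z$,
\begin{equation*}
m(k) = \sum_{j\in\Z}\bigl(m(j+1)-m(j)\bigr)\,\mathbf{1}_{\{j<k\}},
\end{equation*}
so that $T$ decomposes as
\begin{equation*}
Tf = \sum_{j\in\Z}\bigl(m(j+1)-m(j)\bigr)\,S_j f,\qquad \widehat{(S_j f)}_k := \mathbf{1}_{\{k>j\}}\hat f_k.
\end{equation*}
The series converges absolutely in the relevant sense because the weights $m(j+1)-m(j)$ are summable with total mass $\le A$.

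Next I would show that each $S_j$ is bounded on $L^p(\T)$ uniformly in $j$. Observe that $S_0$ is (up to the $k=0$ mode, which is harmless) the standard Riesz projection onto positive Fourier modes, which is bounded on $L^p(\T)$ for $1<p<\infty$ by the classical M.~Riesz theorem, with norm depending only on $p$. For general $j\in\Z$, the identity
\begin{equation*}
S_j f(x) = \fe^{i j x}\, S_0\bigl(\fe^{-i j x} f(x)\bigr)
\end{equation*}
together with the fact that multiplication by $\fe^{\pm i j x}$ is an isometry on $L^p(\T)$ gives $\|S_j\|_{L^p\to L^p}\le C_p$ uniformly in $j$.

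Combining the two steps via the triangle inequality in $L^p$,
\begin{equation*}
\|Tf\|_{L^p}\le \sum_{j\in\Z}\bigl|m(j+1)-m(j)\bigr|\,\|S_j f\|_{L^p}
\le C_p \Bigl(\sum_{j\in\Z}\bigl|m(j+1)-m(j)\bigr|\Bigr)\|f\|_{L^p}\le C_p A\|f\|_{L^p},
\end{equation*}
which is the desired bound. The only nontrivial ingredient is the uniform boundedness of the half-line projections $S_j$; everything else is bookkeeping. I would expect the main (and really the only) obstacle to be citing or establishing the $L^p$-boundedness of the Riesz projection on the circle, which I would simply invoke as a classical fact (for instance, via the Hilbert transform on $\T$ or via transference from $\R$). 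The role of the hypothesis $\lim_{k\to-\infty} m(k)=0$ is exactly to make the Abel summation representation of $m$ valid; without it one would pick up an extra constant multiplier, which is trivially bounded but would not be controlled by $A$ alone.
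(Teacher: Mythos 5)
Your proof is correct: the Abel-summation decomposition $m(k)=\sum_{j<k}\bigl(m(j+1)-m(j)\bigr)$ is valid precisely because of the hypothesis $\lim_{k\to-\infty}m(k)=0$, the modulated half-line projections $S_j f=\fe^{ijx}S_0(\fe^{-ijx}f)$ are uniformly bounded on $L^p(\T)$ by the M.~Riesz theorem, and the triangle inequality then yields $\|Tf\|_{L^p}\le C_pA\|f\|_{L^p}$. The paper gives no proof of this lemma itself, only a citation to the reference \cite{Lpmulitiplier-mathoverflow}; your argument is the standard one for bounded-variation multipliers on the torus and is essentially the same as the argument in that reference, so there is nothing to add.
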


Next, we present the Strichartz estimates  in the periodic setting. 
\begin{lemma} \label{lem:Stri}
For any $\epsilon>0$, there exists $C=C(\epsilon)>0$ such that the following inequalities hold:
\begin{itemize}
\item[(1)]  Let $f\in L^2(\T)$, then
$$
\big\|\fe^{it\partial_x^2}f\big\|_{L^\infty_tL^2_x([0,1])}+
\big\|\fe^{it\partial_x^2}f\big\|_{L^4_{tx}([0,1])}+
\big\|J^{-\epsilon}\fe^{it\partial_x^2}f\big\|_{L^6_{tx}([0,1])}
\le C\|f\|_{L^2}.
$$
\item[(2)] Let $F\in L^\frac43_{tx}([0,1]\times\T)$, then
$$
\Big\|\eta(t)\int_0^t \fe^{i(t-s)\partial_x^2} F(s)\,ds\Big\|_{L^\infty_tL^2_x\cap L^4_{tx} ([0,1])}
+\Big\|\eta(t)\int_0^t \fe^{i(t-s)\partial_x^2} J^{-\epsilon}F(s)\,ds\Big\|_{L^6_{tx} ([0,1])}
\le C\big\|F\big\|_{L^{\frac43}_{tx}([0,1])}.
$$
\end{itemize}
\end{lemma}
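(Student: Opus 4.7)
The plan is to invoke Bourgain's classical Strichartz estimates on the torus for the homogeneous part (1), and then lift these to the inhomogeneous estimates in (2) by a standard $TT^\ast$/duality argument combined with the Christ--Kiselev lemma.

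For (1), the $L^\infty_t L^2_x$ bound is immediate from unitarity of $\fe^{it\partial_x^2}$ on $L^2(\T)$, since its Fourier coefficients are $\fe^{-ik^2 t}\hat f_k$. For the $L^4_{tx}$ estimate I would expand $\fe^{it\partial_x^2}f = \sum_{k\in\Z} \hat f_k \fe^{i(kx-k^2 t)}$, square, and apply Plancherel in $(t,x)$ over a full period $[0,2\pi]$; this reduces the bound to counting pairs $(k_1,k_2)\in\Z^2$ simultaneously satisfying $k_1+k_2=m$ and $k_1^2+k_2^2=\ell$, a count which is $O(1)$ uniformly in $(m,\ell)$. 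Restricting to $[0,1]\subset[0,2\pi]$ costs nothing. For the $L^6$ estimate with $\epsilon$-derivative loss, I would split into dyadic pieces $\Pi_N$, apply Bourgain's divisor-type bound $\|\fe^{it\partial_x^2}\Pi_N f\|_{L^6_{tx}([0,1])}\lesssim N^\epsilon\|\Pi_N f\|_{L^2}$, and sum over dyadic $N$ via Cauchy--Schwarz with the $J^{-\epsilon}$ factor absorbing the $N^\epsilon$ loss.

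For (2), the estimates in (1) produce by duality the bounds
\begin{align*}
\Big\|\int_0^1 \fe^{-is\partial_x^2} F(s)\,ds\Big\|_{L^2_x}\lesssim \|F\|_{L^{4/3}_{tx}([0,1])},
\end{align*}
and analogously with $J^{-\epsilon}$ placed on the $L^6$ side (dual exponent $6/5$). Composing with the homogeneous estimates yields the ``untruncated'' inhomogeneous bound with $\int_0^1$ in place of $\int_0^t$. Passing from the full-interval integral to the retarded one $\int_0^t$ is accomplished by the Christ--Kiselev lemma, which applies since $\frac43<4$ and $\frac43<6$; the smooth cut-off $\eta(t)$ serves only to confine the analysis to a compact interval so Christ--Kiselev's hypotheses hold cleanly.

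The principal obstacle is the $L^6$ Strichartz estimate on the torus, which rests on the divisor bound $d(n)=O_\epsilon(n^\epsilon)$ and is genuinely more delicate than its Euclidean counterpart; I would cite this step from Bourgain's original work rather than reprove it. Once it is in hand, the $L^4$ piece and the dualization/Christ--Kiselev passage to (2) are routine and follow the standard Strichartz template.
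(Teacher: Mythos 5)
Your sketch is correct, but note that the paper does not prove this lemma at all: it is stated as a known periodic Strichartz package (implicitly Bourgain's), so there is no ``paper proof'' to compare against, and what you have written is precisely the standard derivation that the paper takes for granted. The pieces all check out: unitarity for $L^\infty_tL^2_x$; Zygmund's orthogonality argument for $L^4$ (the pair count on $\{k_1+k_2=m,\ k_1^2+k_2^2=\ell\}$ is at most $2$, and $2\pi$-periodicity in $t$ justifies Plancherel over a full period before restricting to $[0,1]$); Bourgain's $N^{\epsilon}$-loss $L^6$ bound summed dyadically against $J^{-\epsilon}$ (use the bound with loss $N^{\epsilon/2}$ so the net factor $N^{-\epsilon/2}$ is summable); and duality plus Christ--Kiselev for the retarded integrals, which applies since $\tfrac43$ is strictly below $4$, $6$ and $\infty$ (for the $L^\infty_tL^2_x$ piece you can also avoid Christ--Kiselev entirely by applying the dual bound to $\chi_{[0,t]}F$). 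Two small remarks: the parenthetical ``dual exponent $6/5$'' is not needed, since the lemma takes $L^{4/3}_{tx}$ data for both output norms and your composition of the $L^{4/3}\to L^2$ dual bound with the homogeneous $L^2\to L^6$ bound (with $J^{-\epsilon}$) is exactly what yields that; and, as you say, the $L^6$ input is genuinely Bourgain's theorem and should be cited rather than reproved, which is consistent with how the paper itself treats the whole lemma.
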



\subsection{A trilinear estimate}\label{subsec4}

The following trilinear estimate plays a crucial role in the proof of the main result. Let $M_j, j=0,\cdots, 3$ be the multipliers satisfy that 
$$
\widehat{T_jf}_k=m_j(k)\hat f_k,
$$
with $m_j$ verifying \eqref{cond-m}.
Let $I=[t_0,t_1]\subset \R$, we define the operator $T$ as 
\begin{align}\label{def-T}
Tv\triangleq& i\int_{t_0}^{t_1} \fe^{-is\partial_x^2} T_0\Big[\fe^{-is\partial_x^2}T_1\bar v(s)\cdot \fe^{is\partial_x^2}T_2 v(s)\cdot \fe^{is\partial_x^2}T_3 v(s)\Big]\,ds\notag\\
&-i\int_{t_0}^{t_1} T_0T_3v(s)\cdot \int_\T T_1\bar v(s)\cdot T_2 v(s)\,dx \,ds
-i\int_{t_0}^{t_1} T_0T_2v(s)\cdot \int_\T T_1\bar v(s)\cdot T_3 v(s)\,dx \,ds.
\end{align}
The nonlinear operator $T$ looks very complicated, however, it well reflects the structure of the nonlinear equation \eqref{model}. 
Let $\gamma\in \R $, denote
\begin{align}\label{def:alpha}
\alpha(\gamma)\triangleq 1+\gamma-\frac{(1-2\gamma)_+^2}{1+(1-2\gamma)_+}+.
\end{align}
Then we have the following lemma. 
\begin{lemma}\label{lem:tri-est}
Denote the norm 
$$
\|v\|_{X^\gamma([t_0,t_1])}\triangleq \|v\|_{L^\infty_tH^{\gamma}_x([t_0,t_1])}+\|\partial_tv\|_{L^1_tH^{\gamma}_x([t_0,t_1])},
$$
and  let $\gamma\in \R $ such that 
$
\alpha(\gamma)\ge 0,
$
then it holds that 
\begin{align*}
\big\|Tv\big\|_{L^2_x}\lesssim  &
\sum\limits_{j} 2^{-3\gamma j} \left\|\Pi_{2^j} \fe^{is\partial_x^2}J^\gamma v\right\|_{L^3_tL^6_x([t_0,t_1])}^3\\
&\quad 
+ \sum\limits_{|j_1|\ge |j_2|\ge|j_3|} 2^{-\alpha j_1}\big\|\Pi_{2^{j_1}}v\big\|_{X^\gamma([t_0,t_1])}\big\|\Pi_{2^{j_2}}v\big\|_{X^\gamma([t_0,t_1])}\big\|\Pi_{2^{j_3}}v\big\|_{X^\gamma([t_0,t_1])}.
\end{align*}
\end{lemma}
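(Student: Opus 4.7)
The plan is to expose the resonance structure of the cubic interaction and exploit it through integration by parts in time, with near-resonant contributions absorbed via direct multilinear H\"older estimates. On the Fourier side, the trilinear kernel of the first (triple-integral) term in $Tv$ carries the oscillatory phase $\fe^{is\Omega}$ with
\[\Omega = k_0^2 + k_1^2 - k_2^2 - k_3^2 = 2(k_0-k_2)(k_0-k_3),\]
where $k_0$ is the output frequency and $-k_1, k_2, k_3$ are the three input frequencies, subject to $k_0+k_1 = k_2+k_3$. Thus $\{\Omega = 0\}$ decomposes as $\{k_0=k_2\}\cup\{k_0=k_3\}$, and a direct Fourier calculation shows that these two resonant strata are exactly what the two subtractive terms in the definition of $Tv$ are designed to remove: on $\{k_0=k_3\}$ the constraint forces $k_1=k_2$, so the residual collapses to the product of $\hat v_{k_0}$ with the $\ell^2$-type pairing $\sum_k m_1(-k)m_2(k)|\hat v_k|^2$, which up to an absolute normalization coincides with $\int T_1\bar v\cdot T_2 v\,dx$; the case $\{k_0=k_2\}$ is symmetric. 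After subtraction, $Tv$ is effectively supported on $\{(k_0-k_2)(k_0-k_3)\neq 0\}$, where $\fe^{is\Omega}$ genuinely oscillates.

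Next I would perform a Littlewood--Paley decomposition $v=\sum_j \Pi_{2^j}v$ and split $Tv$ into dyadic trilinear pieces indexed by $(j_1,j_2,j_3)$; by symmetry I may assume $|j_1|\ge|j_2|\ge|j_3|$. Each such zone is further partitioned into a non-resonant regime where $|\Omega|\gtrsim 2^{2j_1}$ and a near-resonant regime where $|\Omega|$ is small relative to this benchmark. In the non-resonant regime I write $\fe^{is\Omega} = (i\Omega)^{-1}\partial_s \fe^{is\Omega}$ and integrate by parts in $s$: the resulting boundary terms at $s=t_0, t_1$ are controlled by the $L^\infty_t H^\gamma_x$ component of $\|v\|_{X^\gamma}$, and the bulk contribution, in which $\partial_s$ falls on one of the three $v$-factors, by the $L^1_t H^\gamma_x$ norm of $\partial_t v$, which is the other component of $\|v\|_{X^\gamma}$. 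The smoothing factor $|\Omega|^{-1}\lesssim 2^{-2j_1}$, balanced against the $J^\gamma$ weights, then produces the prefactor $2^{-\alpha(\gamma)j_1}$ appearing in the second term on the right-hand side.

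In the near-resonant regime integration by parts yields no usable gain, so I would estimate the time integral directly by Minkowski, $\bigl\|\int_{t_0}^{t_1}\fe^{-is\partial_x^2}F(s)\,ds\bigr\|_{L^2_x}\le \|F\|_{L^1_t L^2_x}$, followed by the trilinear H\"older inequality $\|u_1u_2u_3\|_{L^1_t L^2_x}\le \prod_{i=1}^{3}\|u_i\|_{L^3_t L^6_x}$, where $u_i = \fe^{\pm is\partial_x^2}T_i v$ (with $u_1 = \fe^{-is\partial_x^2}T_1\bar v$). This produces a product of three Strichartz-type norms of dyadic free evolutions of $v$, and after extracting the $J^\gamma$ weights on each factor we recover the first term on the right-hand side. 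The multiplier operators $T_0,\ldots,T_3$ are handled uniformly throughout via Lemma \ref{lem:multiplier}, which, under hypothesis \eqref{cond-m}, provides uniform $L^p$-boundedness so that they commute through our inequalities at the cost of absolute constants.

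The main obstacle I anticipate is the low-regularity subrange $\gamma<1/2$, where the smoothing gain $2^{-2j_1}$ from integration by parts alone cannot overcome the Sobolev cost $2^{\gamma j_1}$ once one sums over the two lower dyadic indices and across the near-resonant/non-resonant split. The precise exponent $\alpha(\gamma) = 1+\gamma - (1-2\gamma)_+^2/(1+(1-2\gamma)_+)+$ encodes a delicate interpolation in which one is forced to pay a small fractional derivative on the medium block $\Pi_{2^{j_2}}$ in order to keep $|\Omega|$ safely above the near-resonant threshold; the subtraction $(1-2\gamma)_+^2/(1+(1-2\gamma)_+)$ from the naive exponent $1+\gamma$ is exactly the price of this concession. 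I expect this combinatorial bookkeeping, together with handling the residual diagonal at $k_0=k_2=k_3$ where the two resonant strata intersect, to be the technical heart of the proof.
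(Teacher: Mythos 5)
Your overall strategy (use the two subtracted terms to delete the resonant strata $k_1+k_2=0$, $k_1+k_3=0$ of the phase, then Littlewood--Paley decompose and play integration by parts in time against direct Strichartz/H\"older bounds, with boundary terms going to $L^\infty_tH^\gamma_x$ and bulk terms to $\|\partial_tv\|_{L^1_tH^\gamma_x}$) is the same as the paper's, but your case decomposition has a genuine gap exactly where the exponent $\alpha(\gamma)$ is decided. You split according to $|\Omega|\gtrsim 2^{2j_1}$ (integrate by parts, gain $2^{-2j_1}$) versus $|\Omega|\ll 2^{2j_1}$ (direct trilinear H\"older with $L^3_tL^6_x$ norms). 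The problem is the intermediate, non-resonant but small-phase regime, e.g.\ $|k_1|\sim|k_2|\sim 2^{j_1}$ with $k_1+k_2$ small and $|k_3|\ll 2^{j_1}$, where $|\Omega|\sim 2^{j_1}|k_1+k_2|$ can be anywhere between $2^{j_1}$ and $2^{2j_1}$. Your ``near-resonant'' estimate there produces products of $L^3_tL^6_x$ norms at three \emph{different} dyadic scales with no gain in $2^{j_1}$; this neither matches the first term of the conclusion (which is a cube at a single scale, and the unconstrained sum over the low index does not converge) nor yields the prefactor $2^{-\alpha(\gamma)j_1}$ needed to place it in the second term. You acknowledge this region as ``the technical heart'' and gesture at paying a fractional derivative on the medium block, but that is precisely the step that is not supplied, and for $\gamma<\tfrac12$ it is not a routine bookkeeping matter.

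The paper's proof closes this differently: the direct $L^3_tL^6_x$ estimate is reserved for the set $\Gamma^1_k$ where \emph{all four} frequencies are comparable (this is what forces the equal-scale cube in the first term, and it is used regardless of the size of $\Omega$); on the complement, one factor of $\phi=2(k_1+k_2)(k_1+k_3)$ is automatically $\gtrsim|k_m|$, so integration by parts is always performed, with only a one-power gain in the worst case. The loss is then recovered by a further sub-splitting on whether the other factor exceeds $|k_m|^{l}$: above the threshold one gets the exponent $\alpha_1=1+l+\gamma-(1-2\gamma)_+$ by crude summation, below it one exploits that the nearly-cancelling pair ranges over only $O(|k_m|^{l})$ values and applies Cauchy--Schwarz in that thin set, giving $\alpha_2=1+\gamma-(1-2\gamma)_+l$; choosing $l=\frac{(1-2\gamma)_+}{1+(1-2\gamma)_+}$ equalizes $\alpha_1=\alpha_2=\alpha(\gamma)$. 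Without this (or an equivalent counting/interpolation mechanism) your two-case scheme does not produce the stated right-hand side, so the proposal as written is incomplete at its decisive step; the remaining ingredients (treatment of the exactly resonant diagonal $k=k_2=k_3$, and the multiplier bounds via Lemma \ref{lem:multiplier}) are fine and coincide with the paper.
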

\begin{proof} In this proof, we default the integral region to be $[t_0,t_1]\times \T$. 
Note that 
\begin{align*}
\widehat{(Tv)}_k=&i \int_{t_0}^{t_1}\sum\limits_{\substack{k_1+k_2+k_3=k\\ k_1+k_2\ne0,k_1+k_3\ne0}}\fe^{is\phi} \mathcal M(k,k_1,k_2,k_3) \>\hat{ \bar v}_{ k _1}(s)\hat v_{ k _2}(s)\hat v_{ k _3}(s)\,ds\\
&\quad
+i\int_{t_0}^{t_1}\mathcal M(k,-k,k,k) \>\hat{ \bar v}_{-k}(s)\hat v_{ k}(s)\hat v_{ k}(s)\,ds\\
\triangleq & \widehat{(T_1v)}_k+\widehat{(T_2v)}_k,
\end{align*}
where we denote 
$$
\mathcal M(k,k_1,k_2,k_3)=m_0(k)m_1(k_1)m_2(k_2)m_3(k_3),
$$
and 
\begin{align*}
\phi 
= k ^2+ k _1^2- k _2^2- k _3^2.
\end{align*}

\noindent {\bf Estimate on $T_1v$}. 
We denote $k_m: |k_m|\triangleq \max\{|k_1|,|k_2|,|k_3||\}$, and the sets 
\begin{align*}
\Gamma_k^{\phi\ne 0}\triangleq &\{(k_1,k_2,k_3):k_1+k_2+k_3=k,k_1+k_2\ne0,k_1+k_3\ne0\};\\ 
\Gamma_{k}^1\triangleq &\{(k_1,k_2,k_3)\in \Gamma_k^{\phi\ne 0}:|k_1|\sim |k_2|\sim|k_3|\sim |k| \};\\
\Gamma_{k}^2\triangleq & \Gamma_k^{\phi\ne 0}\setminus \Gamma_{k}^1.
\end{align*}
Then 
\begin{align*}
\widehat{(T_1v)}_k&=\sum\limits_{j=1}^2 i \int_{t_0}^{t_1}\sum\limits_{\Gamma_{k}^j}\fe^{is\phi} \mathcal M(k,k_1,k_2,k_3) \>\hat{ \bar v}_{ k _1}(s)\hat v_{ k _2}(s)\hat v_{ k _3}(s)\,ds\\
&\triangleq\sum\limits_{j=1}^2\widehat{(T_{1,j}v)}_k.
\end{align*}

$ \bullet $ Estimate on $T_{1,1}v$. By the dyadic decomposition, we write 
\begin{align*}
T_{1,1}v=&i\sum\limits_{j}\int_{t_0}^{t_1} \fe^{-is\partial_x^2} \Pi_{2^j}T_0\Big[\fe^{-is\partial_x^2}\Pi_{2^j}T_1\bar v(s)\cdot \fe^{is\partial_x^2}\Pi_{2^j}T_2 v(s)\cdot \fe^{is\partial_x^2}\Pi_{2^j}T_3 v(s)\Big]\,ds\\
&\quad +\mbox{other similar terms}.
\end{align*}
Here ``other similar terms'' stand for the terms dropping the restriction $k_1+k_2\ne0,k_1+k_3\ne0$.
Then by Lemma \ref{lem:multiplier}, we have that 
\begin{align}
\big\|T_{1,1}v\big\|_{L^2}
\lesssim \sum\limits_{j} 2^{-3\gamma j} \left\|\Pi_{2^j} \fe^{is\partial_x^2}J^\gamma v\right\|_{L^3_tL^6_x}^3.
\end{align}

$ \bullet $ Estimate on $T_{1,2}v$. Note that $\phi\ne 0$, using 
$$
\fe^{{is\phi}}=\frac1{i\phi}\partial_s\big(\fe^{{is\phi}}\big),
$$
and integration by parts, we get that 
\begin{subequations}\label{T-v11}
\begin{align}
\widehat{(T_{1,2}v)}_k&= i\sum\limits_{\Gamma_{k}^2}\frac{1}{i\phi}\fe^{is\phi} \mathcal M(k,k_1,k_2,k_3) \>\hat{ \bar v}_{ k _1}(s)\hat v_{ k _2}(s)\hat v_{ k _3}(s)\Big|_{t_0}^{t_1}\label{T-v11-BT}\\
&\qquad -i \int_{t_0}^{t_1}\sum\limits_{\Gamma_{k}^2}\frac{1}{i\phi}\fe^{is\phi} \mathcal M(k,k_1,k_2,k_3) \>\partial_s\Big[\hat{ \bar v}_{ k _1}(s)\hat v_{ k _2}(s)\hat v_{ k _3}(s)\Big]\,ds.\label{T-v11-5T}
\end{align}
\end{subequations}

For \eqref{T-v11-BT}, we first note that in $\Gamma_k^2$, $|k_1+k_2|\gtrsim |k_m|$ or $|k_1+k_3|\gtrsim |k_m|$. 
Without loss of generality, we may restrict $|k_1+k_2|\gtrsim |k_m|$ in $\Gamma_k^2$, since the other cases can be treated the same. Furthermore, we consider the cases in the following two subsets separately:
\begin{align*}
\Gamma_{k}^{2,1}\triangleq &\{(k_1,k_2,k_3)\in \Gamma_k^{\phi\ne 0}:|k_1+k_2|\gtrsim |k_m| \mbox{ or } |k_1+k_3|\gtrsim |k_m|^l \};\\
\Gamma_{k}^{2,2}\triangleq &\{(k_1,k_2,k_3)\in \Gamma_k^{\phi\ne 0}:|k_1+k_2|\gtrsim  |k_m| \mbox{ and } |k_1+k_3|\ll |k_m|^l \},
\end{align*}
and thus roughly
\begin{align*}
\eqref{T-v11-BT}&= \sum\limits_{j=1}^2 i\sum\limits_{\Gamma_{k}^{2,j}}\frac{1}{i\phi}\fe^{is\phi} \mathcal M(k,k_1,k_2,k_3) \>\hat{ \bar v}_{ k _1}(s)\hat v_{ k _2}(s)\hat v_{ k _3}(s)\Big|_{t_0}^{t_1}\\
&\triangleq (\ref{T-v11-BT}1)+ (\ref{T-v11-BT}2).
\end{align*}

For (\ref{T-v11-BT}1), since $|\mathcal M|\lesssim 1 $, we have that 
\begin{align*}
|(\ref{T-v11-BT}1)|&\lesssim \max\limits_{s\in \{t_0,t_1\}}\sum\limits_{k_1+k_2+k_3=k}|k_m|^{-1-l}\>\big|\hat{ \bar v}_{ k _1}(s)\big|\big|\hat v_{ k _2}(s)\big|\big|\hat v_{ k _3}(s)\big|.
\end{align*}
By the dyadic decomposition and Cauchy-Schwarz inequality, we have that for any $\gamma\in\R$ with 
$$
\alpha_1\triangleq 1+l+\gamma-(1-2\gamma)_+\ge 0 ,
$$
it follows that 
\begin{align*}
\|(\ref{T-v11-BT}1)\|_{l^2_k}&\lesssim \sum\limits_{|j_1|\ge |j_2|\ge|j_3|} 2^{-\alpha_1 j_1} \big\|\Pi_{2^{j_1}}v\big\|_{L^\infty_tH^{\gamma}_x}\big\|\Pi_{2^{j_2}}v\big\|_{{L^\infty_tH^{\gamma}_x}}\big\|\Pi_{2^{j_3}}v\big\|_{{L^\infty_tH^{\gamma}_x}}.
\end{align*}
 
For (\ref{T-v11-BT}2),  we have that 
\begin{align*}
|(\ref{T-v11-BT}2)|&\lesssim \max\limits_{s\in \{t_0,t_1\}}\sum\limits_{k_3}\sum\limits_{|\tilde{k_1}|\ll |k_m|^l}|k_m|^{-1}\>\big|\hat{ \bar v}_{ \tilde{k_1}-k_3}(s)\big|\big|\hat v_{ k- \tilde{k_1}}(s)\big|\big|\hat v_{ k _3}(s)\big|.
\end{align*}
Again, by the dyadic decomposition and Cauchy-Schwarz inequality, we have that 
 for any $\gamma\in\R$ with 
$$
\alpha_2\triangleq 1+\gamma-(1-2\gamma)_+l\ge 0 ,
$$
it follows that 
\begin{align*}
\|(\ref{T-v11-BT}2)\|_{l^2_k}&\lesssim \sum\limits_{|j_1|\ge |j_2|\ge|j_3|} 2^{-\alpha_2 j_1}\big\|\Pi_{2^{j_1}}v\big\|_{L^\infty_tH^{\gamma}_x}\big\|\Pi_{2^{j_2}}v\big\|_{{L^\infty_tH^{\gamma}_x}}\big\|\Pi_{2^{j_3}}v\big\|_{L^\infty_tH^{\gamma}_x}.
\end{align*}

In particular, choosing 
$$
l=\frac{(1-2\gamma)_+}{1+(1-2\gamma)_+},  
$$
such that $\alpha_1=\alpha_2=\alpha$, 
and combining with the two estimates on (\ref{T-v11-BT}), we obtain that 
\begin{align*}
\|(\ref{T-v11-BT})\|_{l^2_k}&\lesssim \sum\limits_{|j_1|\ge |j_2|\ge|j_3|} 2^{-\alpha j_1}\big\|\Pi_{2^{j_1}}v\big\|_{L^\infty_tH^{\gamma}_x}\big\|\Pi_{2^{j_2}}v\big\|_{{L^\infty_tH^{\gamma}_x}}\big\|\Pi_{2^{j_3}}v\big\|_{{L^\infty_tH^{\gamma}_x}}.
\end{align*}

Arguing similarly as (\ref{T-v11-BT}), we also have that 
\begin{align*}
\|(\ref{T-v11-5T})\|_{l^2_k}
&\lesssim \sum\limits_{|j_1|\ge |j_2|\ge|j_3|} 2^{-\alpha j_1}\Big[\big\|\Pi_{2^{j_1}}\partial_tv\big\|_{L^1_tH^{\gamma}_x}\big\|\Pi_{2^{j_2}}v\big\|_{L^\infty_tH^{\gamma}_x}\big\|\Pi_{2^{j_3}}v\big\|_{L^\infty_tH^{\gamma}_x}\\
&\qquad\qquad +\big\|\Pi_{2^{j_1}}v\big\|_{L^\infty_tH^{\gamma}_x}\big\|\Pi_{2^{j_2}}\partial_tv\big\|_{L^1_tH^{\gamma}_x}\big\|\Pi_{2^{j_3}}v\big\|_{{L^\infty_tH^{\gamma}_x}}
\\
&\qquad\qquad+\big\|\Pi_{2^{j_1}}v\big\|_{L^\infty_tH^{\gamma}_x}\big\|\Pi_{2^{j_2}}v\big\|_{L^\infty_tH^{\gamma}_x}\big\|\Pi_{2^{j_3}}\partial_tv\big\|_{L^1_tH^{\gamma}_x}\Big].
\end{align*}

Together with the two estimates on \eqref{T-v11}, it gives that 
\begin{align*}
\big\|T_{1,2}v\big\|_{L^2}
\lesssim 
 \sum\limits_{|j_1|\ge |j_2|\ge|j_3|} 2^{-\alpha j_1}\big\|\Pi_{2^{j_1}}v\big\|_{X^\gamma([t_0,t_1])}\big\|\Pi_{2^{j_2}}v\big\|_{X^\gamma([t_0,t_1])}\big\|\Pi_{2^{j_3}}v\big\|_{X^\gamma([t_0,t_1])}.
\end{align*}


\noindent {\bf Estimate on $T_2v$}. Treated similarly as $T_{1,1}v$, we also have that 
\begin{align*}
\big\|T_2v\big\|_{L^2}
\lesssim \sum\limits_{j} 2^{-3\gamma j} \left\|\Pi_{2^j} \fe^{is\partial_x^2}J^\gamma v\right\|_{L^3_tL^6_x}^3.
\end{align*}

Collecting with the estimates on $T_1v$ and  $T_2v$, we obtain the desired estimates. 
\end{proof}

%

\subsection{Local theory and the related estimates}
We recall the following well-known global result for \eqref{model}, see Bourgain \cite{Bo}.
\begin{lemma}\label{lem:global-theory}
Let $\gamma>0$, then the  problem  \eqref{model} is globally well-posed in $H^\gamma$. In particular, for any $T>0$, 
such that for some constant $C(T,\|u_0\|_{H^\gamma})> 0$,
\begin{align}\label{solution-est}
\big\|u\big\|_{L^\infty_tL^2_x([0,T^*]\times \T)}+\big\|J^\gamma u\big\|_{L^4_{tx}([0,T^*]\times \T)}
+\big\|J^{\gamma-} u\big\|_{L^6_{tx}([0,T^*]\times \T)}
\le C(T,\|u_0\|_{H^\gamma}).
\end{align}
\end{lemma}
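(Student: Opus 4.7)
The plan is to combine the periodic Strichartz estimates of Lemma \ref{lem:Stri} with a Picard fixed-point argument to obtain local well-posedness in $H^\gamma$, and then extend the solution globally in time using the mass conservation law \eqref{mass} and, for $\gamma\ge 1$, the conservation of energy. The statement is essentially Bourgain's result from \cite{Bo}; what follows is a brief self-contained sketch of the steps one would carry out.

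First, I would set up the Duhamel formulation
\begin{align*}
u(t)=\fe^{it\partial_x^2}u^0-i\lambda\int_0^t \fe^{i(t-s)\partial_x^2}|u|^2u(s)\,ds,
\end{align*}
and work on a short interval $[0,\delta]$ in the resolution space
\begin{align*}
Y^\gamma_\delta=\big\{u:\|u\|_{L^\infty_t H^\gamma_x}+\|J^\gamma u\|_{L^4_{tx}}+\|J^{\gamma-\epsilon} u\|_{L^6_{tx}}<\infty\big\},
\end{align*}
for an arbitrarily small $\epsilon>0$. By Lemma \ref{lem:Stri} the homogeneous contribution is bounded by $\|u^0\|_{H^\gamma}$, while the Duhamel term is controlled by $\|J^\gamma(|u|^2u)\|_{L^{4/3}_{tx}}$; Kato--Ponce (Lemma \ref{lem:kato-Ponce}) and H\"older in time then yield $\|J^\gamma(|u|^2u)\|_{L^{4/3}_{tx}}\lesssim \delta^{1/2}\|J^\gamma u\|_{L^4_{tx}}\|u\|_{L^4_{tx}}^2$, so that the $\delta^{1/2}$-gain closes the contraction on a small ball in $Y^\gamma_\delta$ whenever $\delta$ is taken small in terms of $\|u^0\|_{H^\gamma}$.

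To extend the solution globally, I would use the a priori $L^2$ bound coming from mass conservation $\|u(t)\|_{L^2}\equiv \|u^0\|_{L^2}$, iterated with persistence of regularity: on each successive subinterval $[t_n,t_n+\delta]$ one obtains $\|u(t_n+\delta)\|_{H^\gamma}\le (1+C\delta^{1/2})\|u(t_n)\|_{H^\gamma}$, so after $N\sim T/\delta$ iterations the $H^\gamma$-norm is controlled by a constant of the form $C(T,\|u^0\|_{H^\gamma})$. Summing the local Strichartz bounds of Lemma \ref{lem:Stri} over these subintervals then yields \eqref{solution-est}.

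The hard part is the $L^6$-endpoint, which on the torus carries Bourgain's $\epsilon$-derivative loss that is already encoded in the $J^{-\epsilon}$ prefactor of Lemma \ref{lem:Stri}; this is precisely why the $L^6$-bound in \eqref{solution-est} is stated with $J^{\gamma-}$ and not $J^\gamma$. In the very low regularity range $\gamma<\tfrac12$ the Strichartz-based contraction becomes tight, and the cleanest treatment uses Bourgain's $X^{s,b}$ machinery directly; I would simply invoke \cite{Bo} for that range rather than re-deriving it.
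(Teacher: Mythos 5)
The paper does not prove this lemma at all: it is explicitly recalled as a known result of Bourgain \cite{Bo}, so your closing move of invoking \cite{Bo} is in fact all the paper itself does, and in that sense your proposal is consistent with (indeed more detailed than) the paper's treatment. Two quantitative points in your sketch, however, do not work as written. First, the claimed gain $\|J^\gamma(|u|^2u)\|_{L^{4/3}_{tx}}\lesssim \delta^{1/2}\|J^\gamma u\|_{L^4_{tx}}\|u\|_{L^4_{tx}}^2$ is false: H\"older in time with three $L^4_t$ factors lands exactly in $L^{4/3}_t$, leaving no room for any positive power of $\delta$. To create slack you must move some factors out of $L^4_{tx}$, e.g.\ place two factors in the $L^6_{tx}$-based norm (available with $J^{\gamma-}$ since $\gamma>0$, via the $\epsilon$-loss $L^6$ Strichartz of Lemma \ref{lem:Stri}) or one factor in $L^\infty_tL^2_x$ together with a Sobolev/Bernstein step; only then does a power $\delta^\theta$, $\theta>0$, appear and close the contraction. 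Second, for the globalization the per-step estimate $\|u(t_n+\delta)\|_{H^\gamma}\le(1+C\delta^\theta)\|u(t_n)\|_{H^\gamma}$ is only useful if the local time $\delta$ and the constant $C$ depend solely on conserved quantities (here the mass), not on the $H^\gamma$ norm; this is true because the problem is $L^2$-subcritical in 1D, but it is exactly the point that must be stated, otherwise the iteration over $N\sim T/\delta$ steps does not close. With these repairs (or, for the full range $\gamma>0$ including $\gamma<\tfrac12$, simply citing \cite{Bo} as the paper does), the argument is standard.
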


Some consequences of  the lemmas above are in the following. The first is
\begin{lemma}\label{lem:est-vt}
Under the same assumption  as  Lemma \ref{lem:global-theory}, 
\begin{align*} 
\tau^\frac16\big\|J^{\gamma-}\Pi_{\le N}u(t_n)\big\|_{l^6_kL^6_{x}(\T)}
\le C(T,\|u_0\|_{H^\gamma})\big(1+N^2\tau).
\end{align*}
\end{lemma}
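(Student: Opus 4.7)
The plan is to compare the discrete-in-time $l^6_k L^6_x$-norm with the continuous space-time $L^6_{tx}$-norm and then invoke the global Strichartz-type bound of Lemma~\ref{lem:global-theory}. Concretely, for a smooth $f$ one can write
\[
f(t_k)=\frac{1}{\tau}\int_{t_k}^{t_{k+1}}f(s)\,ds+\frac{1}{\tau}\int_{t_k}^{t_{k+1}}\bigl(f(t_k)-f(s)\bigr)\,ds,
\]
and the second integrand is controlled by $\int_{t_k}^{t_{k+1}}|\partial_t f(\theta)|\,d\theta$ via the fundamental theorem of calculus. H\"older in time, raising to the sixth power and summing over $k$ yields the elementary comparison
\[
\tau^{\frac16}\bigl\|f(t_k)\bigr\|_{l^6_k L^6_x}\lesssim \|f\|_{L^6_{tx}([0,T]\times\T)}+\tau\,\|\partial_t f\|_{L^6_{tx}([0,T]\times\T)}.
\]

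Next I would apply this bound to $f=J^{\gamma-}\Pi_{\le N}u$. Lemma~\ref{lem:global-theory} immediately controls the first right-hand term by $C(T,\|u^0\|_{H^\gamma})$. For the second I would insert the NLS equation $\partial_t u = i\partial_x^2 u+i\lambda|u|^2u$, giving
\[
\bigl\|\partial_t\bigl(J^{\gamma-}\Pi_{\le N}u\bigr)\bigr\|_{L^6_{tx}}
\lesssim N^2\,\|J^{\gamma-}\Pi_{\le N}u\|_{L^6_{tx}}+\|J^{\gamma-}\Pi_{\le N}(|u|^2u)\|_{L^6_{tx}}.
\]
The first summand is $\lesssim N^2$ by Lemma~\ref{lem:global-theory} combined with the frequency-localization bound $\|\partial_x^2\Pi_{\le N}\cdot\|\lesssim N^2$. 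The nonlinear summand I would handle using the Kato--Ponce inequality (Lemma~\ref{lem:kato-Ponce}), Sobolev embedding in $\T$, and the Strichartz-type bounds $\|J^\gamma u\|_{L^4_{tx}}+\|J^{\gamma-}u\|_{L^6_{tx}}\lesssim 1$ furnished by Lemma~\ref{lem:global-theory}; distributing the $J^{\gamma-}$ derivative onto one factor of $|u|^2u$ while keeping the other two in $L^6_{tx}$ (or in $L^\infty_t L^2_x$ after Bernstein) gives a quantity uniformly bounded in $N$, which is harmlessly absorbed into $CN^2\tau$. Combining the two contributions then yields
\[
\tau^{\frac16}\bigl\|J^{\gamma-}\Pi_{\le N}u(t_k)\bigr\|_{l^6_k L^6_x}\lesssim C(T,\|u^0\|_{H^\gamma})\bigl(1+N^2\tau\bigr),
\]
which is exactly the claim.

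The only delicate point is the nonlinear bound for $\|J^{\gamma-}\Pi_{\le N}(|u|^2u)\|_{L^6_{tx}}$ when $\gamma$ is close to $0$, since $H^\gamma$ is no longer an algebra; there one must use Kato--Ponce carefully to split $J^{\gamma-}$ among the three factors while matching them to the $L^4_{tx}$ and $L^6_{tx}$ Strichartz norms of Lemma~\ref{lem:global-theory}. However, because the claimed bound already tolerates an $N^2\tau$ penalty, the argument has substantial slack and this step is expected to be technically routine rather than a genuine obstacle.
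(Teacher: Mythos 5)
Your proposal is correct and follows essentially the same route as the paper's proof: compare the discrete $l^6_n L^6_x$ norm with the continuous $L^6_{tx}$ norm on each subinterval $[t_n,t_{n+1}]$, express the increment through $\partial_t u$, insert the equation, use Bernstein for the $\partial_x^2\Pi_{\le N}$ contribution (giving $N^2$) and Kato--Ponce together with mass conservation for the cubic term, then conclude from the spacetime bound of Lemma \ref{lem:global-theory}. The one inaccuracy is your claim that the cubic term is uniformly bounded in $N$: for small $\gamma$ this is not available (no $L^\infty_x$-type control of $u$), and the paper instead pays a factor $N$ there via Bernstein and $\|u\|_{L^\infty_tL^2_x}\lesssim 1$ — but, as you note, any such loss up to $N^2$ is absorbed into the $N^2\tau$ term, so the conclusion stands.
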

\begin{proof}
By Sobolev's inequality, we have that 
\begin{align*} 
\tau^{\frac16}\big\|J^{\gamma-}\Pi_{\le N}u(t_n)\big\|_{L^6_{x}}
=&\big\|J^{\gamma-}\Pi_{\le N}u(t_n)\big\|_{L^6_{tx}([t_n,t_{n+1}])}\\
\le &\big\|J^{\gamma-}u(t)\big\|_{L^6_{tx}([t_n,t_{n+1}])}
+\big\|J^{\gamma-}\Pi_{\le N}\big(u(t)-u(t_n)\big)\big\|_{L^6_{tx}([t_n,t_{n+1}])}.
\end{align*}
Note that by \eqref{model}, \eqref{mass}, Lemma \ref{lem:kato-Ponce}, and Bernstein's inequality, 
\begin{align*}
\big\|J^{\gamma-}\Pi_{\le N}&\big(u(t)-u(t_n)\big)\big\|_{L^6_{tx}([t_n,t_{n+1}])}\\
\lesssim &
 \big\|J^{\gamma-}\Pi_{\le N}\int_{t_n}^t\partial_s u(s)\,ds\big\|_{L^6_{tx}([t_n,t_{n+1}])}\\
 \lesssim &
\tau \big\|\Pi_{\le N}J^{\gamma-}\Delta u(t)\big\|_{L^6_{tx}([t_n,t_{n+1}])}
+\tau \big\|J^{\gamma-} \Pi_{\le N}\big(|u(t)|^2u(t)\big)\big\|_{L^6_{tx}([t_n,t_{n+1}])}
\\
\lesssim &
\tau N^2 \big\|J^{\gamma-} u(t)\big\|_{L^6_{tx}([t_n,t_{n+1}])}
 +\tau N \big\|J^{\gamma-}u\big\|_{L^6_{tx}([t_n,t_{n+1}])}\|u\|_{L^\infty_{t}L^2_x([t_n,t_{n+1}])}^2\\
\lesssim &
\tau N^2\big\|J^{\gamma-} u\big\|_{L^6_{tx}([t_n,t_{n+1}])}.
\end{align*}
Therefore, 
\begin{align*} 
\tau^{\frac16}\big\|J^{\gamma-}\Pi_{\le N}u(t_n)\big\|_{L^6_{x}(\T)}
\le C\big(1+\tau N^2\big)\big\|J^{\gamma-} u\big\|_{L^6_{tx}([t_n,t_{n+1}])}.
\end{align*}
By \eqref{solution-est}, it gives the desired estimate. 
\end{proof}

Moreover, we have 
\begin{lemma}\label{lem:est-vt-2}
Let $\gamma>0$, and $u$ is a solution obtained in Lemma \ref{lem:global-theory} and $v(t)=\fe^{-it\partial_x^2}u(t)$, then 
\begin{align*} 
\sum\limits_{n=1}^{L-1}\big\|J^{\gamma-}\big(v(t_n+s)-v(t_n)\big)\big\|_{L^\infty_sL^2_{x}([0,\tau]\times \T)}
\le C(T,\|u_0\|_{H^\gamma}).
\end{align*}
\end{lemma}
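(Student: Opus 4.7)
The plan is to derive the evolution equation for the profile $v(t)=\fe^{-it\partial_x^2}u(t)$, integrate it in Duhamel form on each subinterval $[t_n,t_{n+1}]$, and then invoke the global Strichartz-type bounds already available from Lemma~\ref{lem:global-theory}. Differentiating $v$ and using that $i\partial_t u=\partial_{xx}u+\lambda|u|^2u$, one obtains
$$
i\partial_t v(t)=\lambda\,\fe^{-it\partial_x^2}\bigl(|u|^2u\bigr)(t),
$$
so that for $s\in[0,\tau]$,
$$
v(t_n+s)-v(t_n)=-i\lambda\int_{t_n}^{t_n+s}\fe^{-it'\partial_x^2}\bigl(|u|^2u\bigr)(t')\,dt'.
$$
Since $\fe^{-it'\partial_x^2}$ commutes with $J^{\gamma-}$ and is an isometry on $L^2$, taking $L^2_x$ norm and then the supremum in $s$ gives
$$
\bigl\|J^{\gamma-}\bigl(v(t_n+\cdot)-v(t_n)\bigr)\bigr\|_{L^\infty_sL^2_x([0,\tau])}\le\int_{t_n}^{t_{n+1}}\bigl\|J^{\gamma-}\bigl(|u|^2u\bigr)(t')\bigr\|_{L^2_x}\,dt'.
$$

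Summing in $n$ and using the disjointness of the subintervals $[t_n,t_{n+1}]\subset[0,T]$, the problem reduces to bounding the single integral $\int_0^T\|J^{\gamma-}(|u|^2u)\|_{L^2_x}\,dt$. Applying the fractional Leibniz rule in its trilinear form (iterating Lemma~\ref{lem:kato-Ponce} with $\frac12=\frac16+\frac16+\frac16$) one gets the pointwise-in-time bound
$$
\bigl\|J^{\gamma-}(|u|^2u)\bigr\|_{L^2_x}\lesssim \bigl\|J^{\gamma-}u\bigr\|_{L^6_x}\|u\|_{L^6_x}^2,
$$
after which H\"older in time with the split $1=\tfrac16+\tfrac16+\tfrac16+\tfrac12$ yields
$$
\int_0^T\bigl\|J^{\gamma-}u\bigr\|_{L^6_x}\|u\|_{L^6_x}^2\,dt\lesssim T^{1/2}\bigl\|J^{\gamma-}u\bigr\|_{L^6_{tx}([0,T])}\|u\|_{L^6_{tx}([0,T])}^2.
$$

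To close the argument, I would fix $\epsilon\in(0,\gamma)$ small in the definition of $\gamma-=\gamma-\epsilon$, so that the multiplier $J^{-(\gamma-)}$ satisfies the hypothesis of Lemma~\ref{lem:multiplier} and is therefore bounded on $L^6(\T)$. This gives $\|u\|_{L^6_{tx}}\lesssim\|J^{\gamma-}u\|_{L^6_{tx}}$, and the bound \eqref{solution-est} from Lemma~\ref{lem:global-theory} absorbs the remaining norm into the constant $C(T,\|u_0\|_{H^\gamma})$. The main (minor) obstacle is purely a matter of bookkeeping: one must make sure $\gamma-$ is chosen to be strictly positive so that the $L^6_x$ trilinear estimate has the derivative in the right place and so that $J^{-(\gamma-)}$ is $L^6$-bounded; everything else is a direct consequence of Duhamel and the already-established global Strichartz bound.
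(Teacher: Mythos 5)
Your proposal is correct and follows essentially the same route as the paper: differentiate (or Duhamel-integrate) the twisted variable $v$, bound $\|J^{\gamma-}(|u|^2u)\|_{L^2_x}$ via the Kato--Ponce inequality by products of $L^6_x$ norms, sum over the subintervals to reduce to a single time integral over $[0,T]$, and close with H\"older in time and the global bound \eqref{solution-est}. The only cosmetic difference is that the paper writes the resulting bound as $\|J^{\gamma-}u\|_{L^3_tL^6_x([0,T])}^3$ and absorbs the $T$-dependence through the embedding $L^6_t\hookrightarrow L^3_t$, whereas you make the $T^{1/2}$ factor and the $L^6$-boundedness of $J^{-(\gamma-)}$ explicit.
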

\begin{proof}
Note that 
\begin{align*}
\partial_{t}  v(t)
=&-i \lambda \fe^{-it\partial_x^2}
    \big[|\fe^{it\partial_x^2}v(t)|^2\,\fe^{it\partial_x^2}v(t)\big] 
\end{align*}
(see \eqref{pt-v} below). Then by Lemma \ref{lem:kato-Ponce}, it follows that 
\begin{align*}
\big\|J^{\gamma-}\partial_t v\big\|_{L^2_{x}(\T)}
\lesssim 
\big\|J^{\gamma-}\big(|u(s)|^2u(s)\big)\big\|_{L^2_{x}(\T)}
\lesssim 
\big\|J^{\gamma-}u\big\|_{L^6_{x}(\T)}^3.
\end{align*}
Thus 
\begin{align*} 
\sum\limits_{n=1}^{L-1}\big\|J^{\gamma-}\big(v(t_n+s)-v(t_n)\big)\big\|_{L^\infty_sL^2_{x}([0,\tau]\times \T)}
\le  &\sum\limits_{n=1}^{L-1}\int_0^\tau\big\|J^{\gamma-}\partial_t v(t_n+t)\big\|_{L^2_{x}(\T)}\,dt\\
\le & \big\|J^{\gamma-}u\big\|_{L^3_tL^6_{x}([0,T]\times \T)}^3.
\end{align*}
Then the desired estimate is followed from \eqref{solution-est}. 
\end{proof}


\section{Construction of the numerical method}\label{section:derivation}

\subsection{Duhamel's formula}
By Duhamel's formula: 
\begin{align}\label{Duhamel-u}
u(t_{n}+t)=\fe^{it\partial_x^2}u(t_n)
 -  i \lambda \int_{t_n}^t \fe^{i\left(t-s\right)\partial_x^2}
   \big(|u(s)|^2u(s)\big) \,ds ,
\end{align} 
as well as the mass conservation \eqref{mass}. 
Let $v(t):=\fe^{-it\partial_x^2}u(t)$ be the twisted variable. Then $v\in C([0,T];H^\gamma(\T))$ for any $\gamma\ge 0$, and satisfies $\|v\|_{C([0,T];H^\gamma(\T))}=\|u\|_{C([0,T];H^\gamma(\T))}$. In particular,  the following mass conservation law holds: 
\begin{align}
\frac1{2\pi}\int_\T |v(t,x)|^2\,d x = \frac1{2\pi}\int_\T |u(t,x)|^2\,d x = M_0 \quad\mbox{for}\,\,\, t>0 .  \label{mass-v}
\end{align}

Applying the operator $\fe^{-it_{n+1} \partial_x^2}$ to the identity \eqref{Duhamel-u}, we obtain 
\begin{align}\label{solution-t}
v(t_{n}+t)=v(t_n)-i \lambda \int_{t_n}^t \fe^{-is\partial_x^2}
    \big[|\fe^{is\partial_x^2}v(s)|^2\,\fe^{is\partial_x^2}v(s)\big]\,ds.
\end{align}
This gives that 
\begin{align}
\partial_{t} v(t)
=&-i \lambda\fe^{-it\partial_x^2}
    \big[|\fe^{it\partial_x^2}v(t)|^2\,\fe^{it\partial_x^2}v(t)\big].\label{pt-v}
\end{align}
In particular, \eqref{solution-t} gives 
\begin{align}\label{solution}
v(t_{n+1})=v(t_n)-i \lambda \int_{t_n}^{t_{n+1}} \fe^{-is\partial_x^2}
    \big[|\fe^{is\partial_x^2}v(s)|^2\,\fe^{is\partial_x^2}v(s)\big]\,ds.
\end{align}
The Fourier coefficients of both sides of \eqref{solution} should be equal, i.e.,  
\begin{align}\label{solution-F}
\hat v_{ k }(t_{n+1})
  =\hat v_{ k }(t_n)
     -i \lambda \int_{t_n}^{t_{n+1}} \sum\limits_{ k _1+ k _2+ k _3 = k }\fe^{is\phi}
        \>\hat{ \bar v}_{ k _1}(s)\hat v_{ k _2}(s)\hat v_{ k _3}(s)\,ds , 
\end{align}
with the phase function  
\begin{align*}
\phi 
=\phi( k, k _1, k _2, k _3) 
= k ^2+ k _1^2- k _2^2- k _3^2
=2(k_1+k_2)(k_1+k_3).
\end{align*}


\subsection{Approximation}

{\bf Step 1: Dropping part of the high-frequency terms.}
From \eqref{solution}, we write 
\begin{align}\label{solution-step1}
v(t_{n+1})=&v(t_n)-i \lambda \int_{t_n}^{t_{n+1}} \fe^{-is\partial_x^2}\Pi_{\le N}
    \big[|\fe^{is\partial_x^2}\Pi_{\le N}v(s)|^2\,\fe^{is\partial_x^2}\Pi_{\le N}v(s)\big]\,ds\notag\\
    &\quad -4\pi i\lambda M_0 \tau \Pi_{>N}v(t_n)+R_n^1(t_{n+1}),
\end{align}
where the remainder term $R^1_n$ is defined by 
\begin{align*}
R^1_n(s)
=& -i \lambda \int_{t_n}^s\fe^{-is\partial_x^2} \Pi_{>N}\Big[\fe^{-is\partial_x^2}\Pi_{\le N}\bar v(s)\cdot \fe^{is\partial_x^2} \Pi_{\le N}v(s)\cdot \fe^{is\partial_x^2} \Pi_{\le N}v(s)\Big]\,ds\\
& -i \lambda \int_{t_n}^s\fe^{-is\partial_x^2} \Big[\fe^{-is\partial_x^2}\Pi_{> N}\bar v(s)\cdot \fe^{is\partial_x^2} \Pi_{\le N}v(s)\cdot \fe^{is\partial_x^2}\Pi_{\le N} v(s)\Big]\,ds\\
& -i \lambda \int_{t_n}^s\fe^{-is\partial_x^2} \Big[\fe^{-is\partial_x^2}\bar v(s)\cdot \fe^{is\partial_x^2} \Pi_{> N}v(s)\cdot \fe^{is\partial_x^2}\Pi_{\le N} v(s)\Big]\,ds\\
&\quad 
+i \lambda \int_{t_n}^s \Pi_{\le N} v(s)\int_\T|\Pi_{> N}v(s)|^2\,dx \,ds
+i \lambda \int_{t_n}^s \Pi_{> N} v(s)\int_\T|\Pi_{\le N}v(s)|^2\,dx \,ds\\
& -i \lambda \int_{t_n}^s\fe^{-is\partial_x^2} \Big[\fe^{-is\partial_x^2}\bar v(s)\cdot \fe^{is\partial_x^2} v(s)\cdot \fe^{is\partial_x^2}\Pi_{> N} v(s)\Big]\,ds\\
&\quad 
+i \lambda \int_{t_n}^s  v(s)\int_\T|\Pi_{> N}v(s)|^2\,dx \,ds
+i \lambda \int_{t_n}^s \Pi_{> N} v(s)\int_\T|v(s)|^2\,dx \,ds\\
  & -2i \lambda \int_{t_n}^s \Pi_{\le N}  v(s)\int_\T|\Pi_{> N}v(s)|^2\,dx \,ds\\
  &+ 4\pi i \lambda M_0  \int_{t_n}^s \Pi_{>N}\>\big[v(s)-v(t_n)\big]\,ds.
\end{align*}

From \eqref{solution-step1}, we obtain that 
\begin{align}
\Pi_{\le N}v(t_{n+1})=&\Pi_{\le N}v(t_n)-i \lambda  \int_{t_n}^{t_{n+1}} \fe^{-is\partial_x^2}\Pi_{\le N}
    \big[|\fe^{is\partial_x^2}\Pi_{\le N}v(s)|^2\,\fe^{is\partial_x^2}\Pi_{\le N}v(s)\big]\,ds\notag\\
     &+\Pi_{\le N}R_n^1(t_{n+1});\label{solution-step1-low}\\
 \Pi_{> N}v(t_{n+1})=&\Pi_{> N}v(t_n) -4\pi i\lambda M_0 \tau \Pi_{>N}v(t_n)+\Pi_{> N} R_n^1(t_{n+1}).   \label{solution-step1-high}
\end{align}

{\bf Step 2: Dropping the high-order terms.}

For convenience, we denote 
\begin{align}
F_n(v;s)\triangleq&-i \lambda \int_{t_n}^s \fe^{-it\partial_x^2}\Pi_{\le N}
    \big[\big|
    \fe^{it\partial_x^2}\Pi_{\le N}v(t)\big|^2\fe^{it\partial_x^2}\Pi_{\le N}v(t)\big]\,dt.
\end{align}
Then replacing $\tau$ by $s$ in \eqref{solution-step1-low}, we have a  general formula:
\begin{align*} 
\Pi_{\le N}v(s)=\Pi_{\le N}v(t_n)+F_n(v;s)+\Pi_{\le N}R_n^1(s)\quad \mbox{ for any } s \in [t_n,t_{n+1}].
\end{align*}

Next, we approximate $F_n(v;t_{n+1})$.
Note that 
$$
\partial_s F_n(v;t_n)=-i \lambda  \fe^{-it_n\partial_x^2}
    \Pi_{\le N}\big[|\Pi_{\le N}u(t_n)|^2\Pi_{\le N}u(t_n)\big],
$$
 we can approximate $v(s)$ as 
\begin{align*}
 \Pi_{\le N}v(s)&\approx \Pi_{\le N}v(t_n)+(s-t_n)\partial_s F_n(v;t_n)\big)\notag\\
 &=\Pi_{\le N}v(t_n)-i\lambda (s-t_n)  \fe^{-it_n\partial_x^2}\Pi_{\le N}\big[|\Pi_{\le N}u(t_n)|^2\Pi_{\le N}u(t_n)\big].
\end{align*}
Denote $u=\fe^{it\partial_x^2}v$ and 
\begin{align}\label{def-G}
G_n(v)\triangleq&-i \lambda  \int_{t_n}^{t_{n+1}} \fe^{-is\partial_x^2}\Pi_{\le N}
    \Big[\Big|\fe^{is\partial_x^2}\Big(v-i\lambda (s-t_n) \fe^{-it_n\partial_x^2} \Pi_{\le N}\big[|u|^2u\big]\Big)\Big|^2\notag\\
   &\qquad \cdot\fe^{is\partial_x^2}\Big(v-i\lambda (s-t_n) \fe^{-it_n\partial_x^2} \Pi_{\le N}\big[|u|^2u\big]\Big)\Big]\,ds;\\
R_n^2\triangleq &  F_n(v;t_{n+1})- G_n\big(\Pi_{\le N}v(t_n)\big).\notag
\end{align}
Then 
\begin{align}\label{Fn-Gn-vn}
F_n(v;t_{n+1})
=&G_n\big(\Pi_{\le N}v(t_n)\big)+R_n^2.
\end{align}
Since $R_n^2$ is a high-order term, we will drop  $R_n^2$ in the definition of the numerical solution in the following. 
Then the approximation on  $F_n(v;t_{n+1})$ is now turning to the one on  $G_n\big(\Pi_{\le N}v(t_n)\big)$.

%

{\bf Step 3: Freezing the phase.}

In the following, we denote 
\begin{align}\label{vu}
v=\Pi_{\le N}v(t_n)\quad \mbox{and}\quad  u=\fe^{it_n\partial_x^2}v
\end{align}
for short.

Now we freeze the phase and  write 
\begin{align}
G_n(v)
=&-i \lambda  \int_{t_n}^{t_{n+1}} \fe^{-i(t_n+\frac\tau 2)\partial_x^2}\Pi_{\le N}
    \Big[\Big|\fe^{i\frac\tau 2\partial_x^2}\Big(u-i\lambda (s-t_n) \Pi_{\le N}\big[|u|^2u\big]\Big)\Big|^2\notag\\
   &\quad \cdot\fe^{i\frac\tau 2\partial_x^2}\Big(u-i\lambda (s-t_n) \Pi_{\le N}\big[|u|^2u\big]\Big)\Big]\,ds+R_n^3[u],\label{Fn-tn+1-3}
\end{align}
where $R_n^3[u]$ is defined by 
\begin{align}\label{Def-Rn3}
&\mathcal F_k(R_n^3[u])\triangleq  -i \lambda \int_0^\tau\sum\limits_{\Gamma_{k,N} }\fe^{it_nk^2}\>r_0(s)
        \Big(\widehat{\bar u}_{k_1}+i\lambda s\widehat{\big(|u|^2\bar u\big)}_{k_1}\Big)
        \notag\\
        &\quad \cdot \Big(\hat u_{k_2}-i\lambda s \widehat{\big(|u|^2u\big)}_{k_2}\Big)
        \Big(\hat u_{k_3}-i\lambda s \widehat{\big(|u|^2u\big)}_{k_3}\Big)\,ds,
\end{align}
here  we have denoted the sets 
\begin{align}\label{def-r0r1}
\Gamma_{k,N}\triangleq&\{(k_1,k_2,k_3):k_1+k_2+k_3=k, |k_1+k_2+k_3|\le N, |k_j|\le N,j=1,2,3\},
\end{align}
and 
$$
r_0(s) \triangleq \fe^{is\phi}-\fe^{\frac i2\tau\phi}.
$$

{\bf Step 4: Dropping the other high-order terms.}

From \eqref{Fn-tn+1-3}, we draw out the third-order terms and write 
\begin{align}
G_n(v)
=&-i \lambda  \int_{t_n}^{t_{n+1}} \fe^{-i(t_n+\frac\tau 2)\partial_x^2}\Pi_{\le N}
    \Big[\fe^{-i\frac\tau 2\partial_x^2}\Big(\bar u+i\lambda (s-t_n) \Pi_{\le N}\big[|u|^2\bar u\big]\Big) 
    \cdot\Big(\fe^{i\frac\tau 2\partial_x^2}u\Big)^2\Big]\,ds\notag\\
   &-2i \lambda  \int_{t_n}^{t_{n+1}} \fe^{-i(t_n+\frac\tau 2)\partial_x^2}\Pi_{\le N}
    \Big[\Big|\fe^{i\frac\tau 2\partial_x^2}u\Big|^2\cdot\fe^{i\frac\tau 2\partial_x^2}\Big(-i\lambda (s-t_n) \Pi_{\le N}\big[|u|^2u\big]\Big)\Big]\,ds\notag\\
   & 
   +R_n^3[u]+R_n^4[u],\label{Fn-tn+1-4}
\end{align}
where $R_n^4[u]$ is defined by 
\begin{align*}
R_n^4[u]
=&-i \lambda  \int_{t_n}^{t_{n+1}} \fe^{-i(t_n+\frac\tau 2)\partial_x^2}\Pi_{\le N}
    \Big[\fe^{-i\frac\tau 2\partial_x^2}\Big(\bar u+i\lambda (s-t_n) \Pi_{\le N}\big[|u|^2\bar u\big]\Big)\notag\\
   &\quad \cdot \Big(i\lambda (s-t_n)\Pi_{\le N}\fe^{i\frac\tau 2\partial_x^2}\big[|u|^2 u\big]\Big)^2\Big]\,ds\\
   &-2i \lambda  \int_{t_n}^{t_{n+1}} \fe^{-i(t_n+\frac\tau 2)\partial_x^2}\Pi_{\le N}
    \Big[  \Big|i\lambda (s-t_n)\Pi_{\le N}\fe^{i\frac\tau 2\partial_x^2}\big[|u|^2\bar u\big]\Big|^2
   \cdot  \fe^{i\frac\tau 2\partial_x^2}u\Big]\,ds.
\end{align*}

Note that \eqref{Fn-tn+1-4} implies that 
\begin{align}\label{Fn-tn+1-5}
G_n(v)=&  
     -i \lambda\tau \fe^{-it_n\partial_x^2}\fe^{-\frac i2\tau\partial_x^2}\Pi_{\le N}
        \Big[\big|\fe^{\frac i2\tau\partial_x^2}u\big|^2\fe^{\frac i2\tau\partial_x^2}u\Big]\notag\\
        &
        +\frac 12 \lambda^2\tau^2 \fe^{-it_n\partial_x^2}\fe^{-\frac i2\tau\partial_x^2}\Pi_{\le N}
        \Big[\fe^{-\frac i2\tau\partial_x^2}\Pi_{\le N}\big(|u|^2\bar u\big)\>\big(\fe^{\frac i2\tau\partial_x^2}u\big)^2\Big]\notag\\
        &
         -\lambda^2\tau^2 \fe^{-it_n\partial_x^2}\fe^{-\frac i2\tau\partial_x^2}\Pi_{\le N}
         \Big[\big|\fe^{\frac i2\tau\partial_x^2}u\big|^2\fe^{\frac i2\tau\partial_x^2}\Pi_{\le N}\big(|u|^2 u\big)\Big]\notag\\
         &+R_n^3[u]+R_n^4[u]\notag\\
        =&
     -i \lambda\tau \fe^{-it_n\partial_x^2}\fe^{-\frac i2\tau\partial_x^2}\Pi_{\le N}
        \Big[\big|\fe^{\frac i2\tau\partial_x^2}u\big|^2\fe^{\frac i2\tau\partial_x^2}u\Big]\notag\\
        &
        -\frac 12 \lambda^2\tau^2 \fe^{-it_n\partial_x^2}\fe^{-\frac i2\tau\partial_x^2}\Pi_{\le N}
        \Big[\big|\fe^{\frac i2\tau\partial_x^2}u\big|^4\fe^{\frac i2\tau\partial_x^2}u\Big]\notag\\
        &+R_n^3[u]+R_n^4[u]+R_n^5[u],
\end{align}
 where $R_n^5[u]$ is defined by 
 \begin{align}\label{def-R5}
  R_n^5[u]\triangleq
 &
        \frac 12 \lambda^2\tau^2\fe^{-\frac i2\tau\partial_x^2}\fe^{-it_n\partial_x^2}\Pi_{\le N}
        \Big[\Big(\fe^{-\frac i2\tau\partial_x^2}\Pi_{\le N}\big(|u|^2\bar u\big) -\big|\fe^{\frac i2\tau\partial_x^2}u\big|^2\fe^{-\frac i2\tau\partial_x^2}\bar u\Big)\big(\fe^{\frac i2\tau\partial_x^2}u\big)^2\Big]\notag\\
        &
         -\lambda^2\tau^2\fe^{-\frac i2\tau\partial_x^2}\fe^{-it_n\partial_x^2}\Pi_{\le N}
         \Big[\big|\fe^{\frac i2\tau\partial_x^2}u\big|^2 \Big(\fe^{\frac i2\tau\partial_x^2}\Pi_{\le N}\big(|u|^2 u\big)-\big|\fe^{\frac i2\tau\partial_x^2}u\big|^2\fe^{\frac i2\tau\partial_x^2}u\Big)\Big].
 \end{align}

{\bf Step 5: Converting to an exponential form.}

From \eqref{Fn-tn+1-5}, we have that 
 \begin{align}\label{Fn-tn+1-6}
\fe^{it_{n+1}\partial_x^2}&\big(v+G_n(v)\big)\notag\\
       =&\fe^{i\tau\partial_x^2}u
     -i \lambda\tau \fe^{\frac i2\tau\partial_x^2}\Pi_{\le N}
        \Big[\big|\fe^{\frac i2\tau\partial_x^2}u\big|^2\fe^{\frac i2\tau\partial_x^2}u\Big]
        -\frac 12 \lambda^2\tau^2\fe^{\frac i2\tau\partial_x^2}\Pi_{\le N}
        \Big[\big|\fe^{\frac i2\tau\partial_x^2}u\big|^4\fe^{\frac i2\tau\partial_x^2}u\Big]\notag\\
        &+\fe^{it_{n+1}\partial_x^2}\Big(R_n^3[u]+R_n^4[u]+R_n^5[u]\Big)\notag\\
        =&
        \fe^{\frac i2\tau\partial_x^2}\Pi_{\le N}
        \Big[\fe^{-i \lambda\tau \big|\fe^{\frac i2\tau\partial_x^2}u\big|^2}\fe^{\frac i2\tau\partial_x^2}u\Big]
        +\fe^{it_{n+1}\partial_x^2}\Big(R_n^3[u]+R_n^4[u]+R_n^5[u]+R_n^6[u]\Big),
\end{align}
 where $R_n^6[u]$ is defined by 
 \begin{align}\label{def-R3}
  R_n^6[u]=
 &
      -\fe^{-it_{n+1}\partial_x^2} \fe^{\frac i2\tau\partial_x^2}\Pi_{\le N}\Big[\Psi_1\Big(-i \lambda\tau \big|\fe^{\frac i2\tau\partial_x^2} u\big|^2\Big)\fe^{\frac i2\tau\partial_x^2}u\Big]
       ,\\ 
\Psi_1(x)=&\fe^x-(1+x+\frac12x^2)\notag.
 \end{align}
 
 In particular, denote that 
 \begin{align}\label{Def:Phi}
\Phi(u)= \fe^{\frac i2\tau\partial_x^2}\Pi_{\le N}
        \Big[\fe^{-i \lambda\tau \big|\fe^{\frac i2\tau\partial_x^2}u\big|^2}\fe^{\frac i2\tau\partial_x^2}u\Big],
 \end{align}
 then by \eqref{Fn-tn+1-6} and \eqref{def-G},  we obtain that 
 \begin{align}
\fe^{-it_{n+1}\partial_x^2}\Phi(u)
 =&
  v+ G_n(v)-\Big(R_n^3[u]+R_n^4[u]+R_n^5[u]+R_n^6[u]\Big)\notag\\
 =&v-i \lambda  \int_{t_n}^{t_{n+1}} \fe^{-is\partial_x^2}\Pi_{\le N}
    \Big[\Big|\fe^{is\partial_x^2}\Big(v-i\lambda (s-t_n)\fe^{-it_n\partial_x^2} \Pi_{\le N} \big[|u|^2u\big]\Big)\Big|^2\notag\\
   &\qquad\qquad \cdot\fe^{is\partial_x^2}\Big(v-i\lambda (s-t_n)\fe^{-it_n\partial_x^2} \Pi_{\le N}\big[|u|^2u\big]\Big)\Big]\,ds\notag\\
   &\quad- \Big(R_n^3[u]+R_n^4[u]+R_n^5[u]+R_n^6[u]\Big).\label{Phi-formula}
 \end{align}
 This formula will be used below. 
 
 Now by \eqref{solution-step1-low}, \eqref{Fn-Gn-vn} and \eqref{vu}, \eqref{Fn-tn+1-6} leads to the approximation of the low-frequency solution:  
 \begin{align}\label{solution-F-u-low}
\Pi_{\le N}u(t_{n+1})
= &\fe^{\frac i2\tau\partial_x^2}\Pi_{\le N}
        \Big[\fe^{-i \lambda\tau \big|\fe^{\frac i2\tau\partial_x^2}\Pi_{\le N}u(t_n)\big|^2}\fe^{\frac i2\tau\partial_x^2}\Pi_{\le N}u(t_n)\Big]
        +\fe^{it_{n+1}\partial_x^2}\Pi_{\le N}R_n^1(t_{n+1})\notag\\
        &+\fe^{it_{n+1}\partial_x^2}\Big(R_n^2+R_n^3[\Pi_{\le N}u]+R_n^4[\Pi_{\le N}u]+R_n^5[\Pi_{\le N}u]+R_n^6[\Pi_{\le N}u]\Big).
\end{align}

For the  high-frequency solution, we apply \eqref{solution-step1-high}  and write 
\begin{align}\label{u-eqs-btN}
 \Pi_{> N}u(t_{n+1})=&\Pi_{> N}\fe^{ i\tau\partial_x^2}\fe^{-4\pi i\lambda M_0\tau}u(t_n)+\fe^{it_{n+1}\partial_x^2}\Pi_{> N} R_n^1(t_{n+1})+R_n^7\big[\Pi_{> N}u(t_n)\big],
\end{align}
where 
$$
R_n^7[u]=-\Psi_2(-4\pi i\lambda M_0\tau) \fe^{ i\tau\partial_x^2}u,\quad 
\Psi_2(x)=\fe^{x}-(1+x).
$$

Since 

The numerical scheme can be defined by dropping the defect terms $R_n^j, j=1,\cdots,7$. 
Namely, for $n=0,1\ldots, L - 1$,
\begin{align}
\Pi_{\le N} u^{n+1}=&\Pi_{\le N}\fe^{i\frac\tau2\partial_x^2}\mathcal N_\tau\big[\fe^{i\frac\tau2\partial_x^2}\Pi_{\le N}u^n\big];
\label{u-n-ltN}\\
\Pi_{> N} u^{n+1}=&\Pi_{>N}\big[\fe^{i\tau\partial_x^2-4\pi i\lambda M_0\tau}u^n\big].\label{u-n-btN}
\end{align}

\section{Continuation method and framework of the proof}\label{sec:Continuation}

For the scheme defined in \eqref{u-n-ltN}--\ref{u-n-btN}, we have the following more general results.

\begin{proposition}\label{prop:main}
Let $\gamma\in (0,2]$. If $u^0\in H^\gamma(\T)$, then there exist positive constants $\tau_0$  and $C$ such that for $\tau\leq\tau_0$ and $\tau N\le 1$, the numerical solution given by  \eqref{u-n-ltN}--\ref{u-n-btN} has the following error bound:
\begin{equation}\label{error-N-1}
\max_{1\le n\le L}  \|u(t_n)-u^{n}\|_{L^2}
  \le C\Big( N^{-\min\{2\gamma,\alpha(\gamma)\}}+\tau N^{-\gamma+}+\tau^2 N^{4-\gamma}\Big), 
\end{equation} 
where the constants $\tau_0$ and $C$ depend only on $T$ and $\|u^0\|_{H^\gamma}$. 
\end{proposition}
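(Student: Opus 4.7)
The plan is to quantify, term by term, the remainders $R_n^1,\ldots,R_n^7$ produced by the derivation in Section 3, and then to close the stability estimate by a bootstrap in the continuous working spaces of Section 4. Equations \eqref{solution-F-u-low} and \eqref{u-eqs-btN} express the exact solution as the numerical step plus defects, so subtracting \eqref{u-n-ltN}--\eqref{u-n-btN} gives a discrete Duhamel identity for $e^n := u(t_n) - u^n$. Since the high-frequency step is an $L^2$-isometry and the low-frequency map $\Phi$ in \eqref{Def:Phi} is Lipschitz on $L^2$-balls with constant $1 + O(\tau)$ (as is immediate from its exponential form), the global error is controlled by the sum of the local consistencies plus a Gronwall-type factor, provided an a priori bound on $\|u^n\|_{L^2}$ is maintained along the iteration.

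The consistency analysis separates into three qualitatively different groups corresponding to the three pieces of the stated bound. First, $R_n^1$ collects the cubic interactions that mix low and high frequencies; each summand matches the template of the trilinear operator $T$ in \eqref{def-T} after choosing the multipliers $m_j$ as the projectors $\Pi_{\le N}, \Pi_{>N}$. Lemma \ref{lem:tri-est} then produces a local bound of the form
\begin{equation*}
\|R_n^1(t_{n+1})\|_{L^2} \lesssim N^{-\min\{2\gamma,\alpha(\gamma)\}}\,\|v\|_{X^\gamma([t_n,t_{n+1}])}^3 + (\text{Strichartz piece}),
\end{equation*}
and summation in $n$ using Lemmas \ref{lem:Stri}, \ref{lem:est-vt}, \ref{lem:est-vt-2} recovers the $N^{-\min\{2\gamma,\alpha(\gamma)\}}$ contribution; the tail term $\int_{t_n}^s \Pi_{>N}(v(s)-v(t_n))\,ds$ inside $R_n^1$ yields, via Lemma \ref{lem:est-vt-2} and the high-frequency projection, the $\tau N^{-\gamma+}$ summand. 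Second, the high-order-in-$\tau$ defects $R_n^2$--$R_n^6$ are genuine second-order Taylor/phase-freezing remainders; each carries a factor $\tau^2$ and up to four spatial derivatives hitting cubic expressions of $\Pi_{\le N} u$, and the Bernstein inequality (thanks to $\tau N \le 1$) converts these derivatives into $N^{4-\gamma}$, giving the $\tau^2 N^{4-\gamma}$ piece. Third, $R_n^7$ is purely $O(\tau^2)$ and is absorbed harmlessly.

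For the global argument one follows the perturbation scheme introduced in \cite{LiWu-2022}: construct a continuous interpolant $\mathcal U_{\le N}, \mathcal U_{>N}$ with $\mathcal U_{\le N}(t_n)=\Pi_{\le N} u^n$, $\mathcal U_{>N}(t_n) = \Pi_{>N} u^n$, whose low-frequency part solves a frozen-phase NLS and whose Strichartz norms can be controlled directly by those of the exact $u$ plus the accumulated error. The trilinear estimate of Lemma \ref{lem:tri-est} applied to the difference equation then yields a stability bound in the working space of Section 4. A bootstrap closes the a priori assumption on a sub-interval of length $T_\ast = T_\ast(\|u^0\|_{H^\gamma})$, after which the time-translation invariance of the scheme allows one to iterate over $O(T/T_\ast)$ consecutive pieces to cover $[0,T]$.

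The main obstacle is the consistency estimate for $R_n^1$: one must exploit the non-resonant phase $\phi = 2(k_1+k_2)(k_1+k_3)\neq 0$ through integration by parts in $s$ so as to gain the sharp rate $N^{-\min\{2\gamma, \alpha(\gamma)\}}$ rather than the naive $N^{-\gamma}$; Lemma \ref{lem:tri-est} packages exactly this computation, so the real work is to rewrite every component of $R_n^1$ in the template form \eqref{def-T}. A secondary technical difficulty is the low-regularity range $\gamma \in (0, \tfrac12]$, where $H^\gamma$ does not control the cubic nonlinearity pointwise and the bootstrap must instead be run in Strichartz norms; this is the role of the $L^4_{tx}$ and $L^6_{tx}$ pieces in Lemmas \ref{lem:Stri} and \ref{lem:global-theory}, and the perturbation trick of \cite{LiWu-2022} is invoked to absorb the remaining low-regularity loss.
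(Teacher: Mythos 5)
Your overall architecture (continuous interpolant, the trilinear Lemma~\ref{lem:tri-est} for the low--high interaction terms, $\tau^2N^{4-\gamma}$ from the Taylor/phase-freezing remainders, $\tau N^{-\gamma+}$ from the $\Pi_{>N}(v(s)-v(t_n))$ term) matches the paper, but there is a genuine gap in your stability mechanism. You claim the low-frequency map $\Phi$ of \eqref{Def:Phi} is Lipschitz on $L^2$-balls with constant $1+O(\tau)$, ``immediate from its exponential form,'' and you then run a Lady Windermere/Gronwall argument off the local consistencies. This is false: $\phi\mapsto \fe^{-i\lambda\tau|\phi|^2}\phi$ preserves the $L^2$ \emph{norm}, but its $L^2$-Lipschitz constant involves $\tau\|\phi\|_{L^\infty}^2$, and on the range of $\Pi_{\le N}$ Bernstein only gives $1+O(\tau N\|\phi\|_{L^2}^2)$, i.e.\ $1+O(1)$ per step under $\tau N\le 1$; composed over $L\sim\tau^{-1}$ steps this is useless. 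This is exactly the ``stability issue'' the paper points out (Remark~\ref{Remark}(2)), and it is not confined to $\gamma\le\frac12$ as your last paragraph suggests: even for $\gamma\in(\frac12,2]$ one has no uniform $H^\gamma$ or $L^\infty$ bound on the numerical iterates $u^n$, so the pointwise-in-time Gronwall route is unavailable for all $\gamma\in(0,2]$. The paper instead compares the interpolant $\mathcal U_{\le N}$ with $\Pi_{\le N}u$ through the Duhamel identity \eqref{def-E} in the dispersive norms $\mathcal X$, where the stability factor is $\|u\|_{L^4_{tx}}^2\le\delta_0^2$ on short subintervals plus $A(N,\tau)^2\mathcal X^2$, closed by a bootstrap; moreover, since the defects $R_n^3,\dots,R_n^6$ are evaluated at the \emph{numerical} iterates, their estimation requires the a priori bound $\mathcal Y$ on $\tau^{1/6}\|J^{\gamma-}\Pi_{\le N}u^n\|_{\ell^6_nL^6_x}$ (Lemma~\ref{lem:YT}), so consistency and stability must be closed \emph{simultaneously} through the coupled system \eqref{est:XY-T}, not sequentially as in your first paragraph.

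Two further points. For the long-time extension, ``time-translation invariance'' is not enough: the subdivision is by $\delta_0$-smallness of the exact solution's Strichartz norms, and one must keep the initial errors $X_k$ below the fixed bootstrap threshold over all $K\sim T^*/\delta_0$ pieces; this works only because the per-interval error increment is \emph{additive} and of size $O(A(N,\tau)\delta_0^2)$ (the almost-conservation estimate \eqref{Energy} together with a single global bound on $\Upsilon_0$), so the total is $O(A(N,\tau)\,T^*\delta_0)$ --- a per-interval multiplicative constant would destroy the threshold after finitely many intervals with no small parameter to save it. Finally, the high-frequency defect is not ``absorbed harmlessly'': per step it is $O(\tau^2\|\Pi_{>N}u^n\|_{L^2})$, and with only $\|\Pi_{>N}u^n\|_{L^2}\lesssim\|u^0\|_{L^2}$ the sum gives a standalone $O(\tau)$, which violates \eqref{error-N-1} for $\gamma\in(\frac32,2]$ (there the claimed rate is better than $\tau$). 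The paper removes it by a two-pass self-improvement: a first Gronwall bound on $\mathcal E_{>N}$ yields $\|\Pi_{>N}v^n\|_{L^2}\lesssim \tau+N^{-\gamma}+N^{-\min\{2\gamma,\alpha(\gamma)\}}$, which is then reinserted into Proposition~\ref{prop:R34} to upgrade the contribution to $\tau^2+\tau N^{-\gamma+}+N^{-\min\{2\gamma,\alpha(\gamma)\}}$. Your consistency accounting for the low--high terms and for the $\tau^2N^{4-\gamma}$ block is otherwise in line with the paper.
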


Before performing the proof of Proposition \ref{prop:main}, let us indicate how it implies Theorem \ref{the:main} for \eqref{NuSo-NLS-1}.

\begin{proof}[Proof of Theorem \ref{the:main} by \eqref{NuSo-NLS-1}]
Note that when $\gamma\in (0,1)$, 
$$
N^{-\min\{2\gamma,\alpha(\gamma)\}}=N^{-2\gamma}.
$$
Then \eqref{error-N-1} implies that 
\begin{equation*} 
\max_{1\le n\le L}  \|u(t_n)-u^{n}\|_{L^2}
  \le C\Big( N^{-2\gamma}+\tau N^{-\gamma+}+\tau^2 N^{4-\gamma}\Big), 
\end{equation*} 
Now we choose 
$$
\tau^{-\frac{2\gamma}{4+\gamma}}, 
$$
then it gives the desired estimate in \eqref{error-tau-1}. 

Note that when $\gamma\in [1,2]$, 
$$
N^{-\min\{2\gamma,\alpha(\gamma)\}}=N^{-\alpha(\gamma)}=N^{-(1+\gamma)+}.
$$
Then choosing $N=\tau^{-\frac25+}$ such that $N^{-\alpha(\gamma)}=\tau^2 N^{4-\gamma}$, 
\eqref{error-N-1} implies the desired estimate in \eqref{error-tau-2}.  This gives the conclusion in Theorem \ref{the:main} for \eqref{NuSo-NLS-1}. 
\end{proof}

Hence, it reduces to the proof of Proposition \ref{prop:main}.

\subsection{Continuation method: Low-frequency component}


In this subsection, we construct a continuous solution $\mathcal U_{\le N}(t)$ such that $\mathcal U_{\le N}(t_n)=\Pi_{\le N} u^n$ for $n=0,1,\cdots, L-1$. 
To do this, we denote $v^n=\fe^{-it_n\partial_x^2}u^n$. Then applying  \eqref{Def:Phi}, \eqref{Phi-formula} and \eqref{u-n-ltN}, we obtain that 
\begin{align*}
\Pi_{\le N}v^{n+1}=  
&\Pi_{\le N}v^n\\
&-i \lambda  \int_{t_n}^{t_{n+1}} \fe^{-is\partial_x^2}\Pi_{\le N}
    \Big[\Big|\fe^{is\partial_x^2}\Pi_{\le N}\Big(v^n-i\lambda (s-t_n)\fe^{-it_n\partial_x^2} \big[|\Pi_{\le N}u^n|^2\Pi_{\le N}u^n\big]\Big)\Big|^2\\
   &\qquad \cdot\fe^{is\partial_x^2}\Pi_{\le N}\Big(v^n-i\lambda (s-t_n)\fe^{-it_n\partial_x^2} \big[|\Pi_{\le N}u^n|^2\Pi_{\le N}u^n\big]\Big)\Big]\,ds+\Upsilon_n^1,
\end{align*}
where we denote 
\begin{align}\label{def:Rn1}
\Upsilon_n^1\triangleq -\Big(R_n^3[\Pi_{\le N}u^n]+R_n^4[\Pi_{\le N}u^n]+R_n^5[\Pi_{\le N}u^n]+R_n^6[\Pi_{\le N}u^n]\Big).
\end{align}

Accordingly, we define 
\begin{align}
\mathcal V_{\le N}(t)
 \triangleq 
 &\Pi_{\le N}v^n-i \lambda  \int_{t_n}^t \fe^{-is\partial_x^2}\Pi_{\le N}
    \Big[\Big|\fe^{is\partial_x^2}\Pi_{\le N}\Big(v^n-i\lambda (s-t_n)\fe^{-it_n\partial_x^2} \big[|\Pi_{\le N} u^n|^2\Pi_{\le N} u^n\big]\Big)\Big|^2
    \notag\\
   &\quad \cdot\fe^{is\partial_x^2}\Pi_{\le N}\Big(v^n-i\lambda (s-t_n)\fe^{-it_n\partial_x^2} \big[|\Pi_{\le N} u^n|^2\Pi_{\le N} u^n\big]\Big)\Big]\,ds
   +\frac{t-t_n}{\tau}\Upsilon_n^1, \label{V-ltN}
\end{align}
for $t\in [t_n,t_{n+1}]$.  Note that $\mathcal V_{\le N}$ is continuous in time and  satisfies that 
$$
\mathcal V_{\le N}(t_n)=\Pi_{\le N} v^n;\quad 
\mathcal V_{\le N}(t_{n+1})=\Pi_{\le N} v^{n+1}.
$$

Further, we denote 
$$
 \mathcal U_{\le N}(t)\triangleq \fe^{it\partial_x^2}\mathcal V_{\le N}(t),
 $$
 and 
 \begin{align}\label{def:rn} 
 r^n(t) \triangleq\mathcal V_{\le N}(t)-\Pi_{\le N}\Big(v^n-i\lambda (t-t_n)\fe^{-it_n\partial_x^2} \big[|\Pi_{\le N} u^n|^2\Pi_{\le N} u^n\big]\Big).
\end{align}
Then we rewrite $\mathcal V_{\le N}(t)$ as
\begin{align}
\mathcal V_{\le N}(t)
 = 
 &\Pi_{\le N}v^n-i \lambda  \int_{t_n}^{t} \fe^{-is\partial_x^2}\Pi_{\le N}
    \Big[\big|\mathcal U_{\le N}(s)\big|^2\mathcal U_{\le N}(s)\Big]\,ds+\frac{t-t_n}{\tau} \Upsilon_n^1+\Upsilon_n^2(t), \label{V-eqs}
\end{align}
where 
\begin{align}\label{Rnt}
\Upsilon_n^2(t)\triangleq &i \lambda  \int_{t_n}^t \fe^{-is\partial_x^2}\Pi_{\le N}
    \Big[\fe^{-is\partial_x^2}\overline{r^n(s)}\cdot \Big(\mathcal U_{\le N}(s)\Big)^2\Big]\,ds\notag\\
    &+2i \lambda  \int_{t_n}^t \fe^{-is\partial_x^2}\Pi_{\le N}\Big[
    \big|\mathcal U_{\le N}(s)\big|^2 \cdot \fe^{is\partial_x^2} r^n(s)-\big|\fe^{is\partial_x^2} r^n(s)\big|^2 \cdot \mathcal U_{\le N}(s)\Big]\,ds\notag\\
     &-i \lambda  \int_{t_n}^t \fe^{-is\partial_x^2}\Pi_{\le N}
    \Big[\overline{\mathcal U_{\le N}(s)}\cdot \big(\fe^{is\partial_x^2}r^n(s)\big)^2-\big|\fe^{is\partial_x^2}r^n(s)\big|^2\fe^{is\partial_x^2}r^n(s)\Big]\,ds.
\end{align}
Taking the summation, we obtain that 
\begin{align}
\mathcal V_{\le N}(t)
 = 
 &\Pi_{\le N}v^0-i \lambda  \int_0^{t} \fe^{-is\partial_x^2}\Pi_{\le N}
    \Big[\big|\mathcal U_{\le N}(s)\big|^2\mathcal U_{\le N}(s)\Big]\,ds+
    \Upsilon(t), \label{V-sum}
\end{align}
where 
\begin{align}\label{def:R}
\Upsilon(t)\triangleq
\frac{t-t_n}{\tau} \Upsilon_n^1+\Upsilon_n^2(t)+\sum\limits_{j=0}^{n-1}\big( \Upsilon_j^1+\Upsilon_j^2(t_{j+1})\big).
\end{align}
This leads that  for $t\in [t_n,t_{n+1}]$,
\begin{align}\label{U-eqs}
\mathcal U_{\le N}(t)
 = 
 &\fe^{it\partial_x^2}\Pi_{\le N}u^0-i \lambda  \int_0^{t} \fe^{i(t-s)\partial_x^2}\Pi_{\le N}
    \Big[\big|\mathcal U_{\le N}(s)\big|^2\mathcal U_{\le N}(s)\Big]\,ds+\fe^{it\partial_x^2}\Upsilon(t).
\end{align}


\subsection{Framework of the proof: Low-frequency component}
Note that by \eqref{solution-step1-low} (replacing $t_{n+1}$ by $t$) and Taking the summation, we have that 
\begin{align}\label{u-eqs}
\Pi_{\le N} u(t)
 = 
 &\fe^{it\partial_x^2}\Pi_{\le N}u_0-i \lambda  \int_0^t \fe^{i(t-s)\partial_x^2}\Pi_{\le N}
    \big[|\fe^{is\partial_x^2}\Pi_{\le N}v(s)|^2\,\fe^{is\partial_x^2}\Pi_{\le N}v(s)\big]\,ds\notag\\
     &+\fe^{it\partial_x^2}\Pi_{\le N}\Upsilon_0(t),
\end{align}
where 
\begin{align}\label{def:CR0}
\Upsilon_0(t)\triangleq 
\sum\limits_{j=0}^{n-1}\Upsilon_j^1(t_{j+1})+\Upsilon_n^1(t).
\end{align}

Denote 
$$
\mathcal E_{\le N}(t)\triangleq\mathcal U_{\le N}(t)-\Pi_{\le N}u(t),
$$
then by \eqref{U-eqs} and  \eqref{u-eqs}, we obtain that 
\begin{align}
\mathcal E_{\le N}(t)
 = 
 &\fe^{it\partial_x^2}\mathcal E_{\le N}(0)-i \lambda  \int_0^{t} \fe^{i(t-s)\partial_x^2}\Pi_{\le N}
    \Big[\big|\mathcal U_{\le N}(s)\big|^2\mathcal U_{\le N}(s)-\big|\Pi_{\le N}u(s)\big|^2\Pi_{\le N}u(s)\Big]\,ds\notag\\
    &+\fe^{it\partial_x^2}\Big(\Upsilon(t)-\Pi_{\le N}\Upsilon_0(t)\Big).\label{def-E}
\end{align}

Now for any fixed $T>0$, we define the working space and denote 
\begin{align}\label{def:XT}
\mathcal X([0,T])\triangleq  &A(N,\tau)^{-1}\Big(\big\|\mathcal E_{\le N}\big\|_{L^\infty_tL^2_x\cap L^4_{tx}([0,T])}+\big\|J^{0-}\mathcal E_{\le N}\big\|_{L^6_{tx}([0,T])}\Big);\notag\\
\mathcal Y([0,T])\triangleq  &\tau^\frac16 \big\|J^{\gamma-}\Pi_{\le N}u^n\big\|_{l^6_nL^6_{x}},
\end{align}
where 
\begin{align}\label{def-ANtau}
A(N,\tau)\triangleq & N^{-\min\{2\gamma,\alpha(\gamma)\}}+\tau N^{-\gamma+}+\tau^2 N^{4-\gamma}.
\end{align}

\subsection{Continuation method: High-frequency component}

In this subsection, we construct a continuous solution $\mathcal U_{> N}(t)$ such that $\mathcal U_{> N}(t_n)=\Pi_{> N} u^n$ for $n=0,1,\cdots, L-1$. 
Again, we denote $v^n=\fe^{-it_n\partial_x^2}u^n$. Note that 
$$
\fe^{-4\pi i\lambda M_0\tau}u=(1-4\pi i\lambda M_0\tau)u+\Psi_2(-4\pi i\lambda M_0\tau)u.
$$
Then we have that 
\begin{align*} 
 \Pi_{> N}v^{n+1}=&\Pi_{> N}v^n-4\pi i\lambda M_0\tau \Pi_{> N}v^n+\Psi_2(-4\pi i\lambda M_0\tau)\Pi_{> N}v^n.
\end{align*}
Accordingly, we define that for any $t\in [t_n,t_{n+1}]$, 
\begin{align} \label{def:V-btN}
 \mathcal V_{> N}(t)= &\Pi_{> N}v^n-4\pi i\lambda M_0(t-t_n) \Pi_{> N}v^n+\frac{t-t_n}{\tau}\Psi_2(-4\pi i\lambda M_0\tau)\Pi_{> N}v^n.
\end{align}
Then we have that $\mathcal V_{> N}(t)$ is a continuous function in $\R$ and $ \mathcal V_{> N}(t_n)=\Pi_{> N} v^n$, for any $n=0, 1,\cdots, L-1$. 
Furthermore, we  rewrite it as 
\begin{align}\label{V-eqs-btN}
 \mathcal V_{> N}(t)= &\Pi_{> N}v^n-4\pi i\lambda M_0\int_{t_n}^t \mathcal V_{> N}(s)\,ds+\Upsilon_n^3(t),
\end{align}
where 
\begin{align}\label{def:R3n}
\Upsilon_n^3(t)=4\pi i\lambda M_0\int_{t_n}^t \big(\mathcal V_{> N}(s)-\Pi_{> N}v^n\big)\,ds+\frac{t-t_n}{\tau}\Psi_2(-4\pi i\lambda M_0\tau)\Pi_{> N}v^n.
\end{align}
Therefore, taking the summation, it implies that  for any $t\in [t_n,t_{n+1}]$, 
\begin{align}\label{V-eqs-btN}
 \mathcal V_{> N}(t)= &\Pi_{> N}v^0-4\pi i\lambda M_0\int_0^t \mathcal V_{> N}(s)\,ds+\Upsilon_3(t),
\end{align}
where 
$$
\Upsilon_3(t)\triangleq\Upsilon_n^3(t)+\sum\limits_{j=0}^{n-1}\Upsilon_j^3(t_{j+1}).
$$

\subsection{Framework of the proof: High-frequency component}
By \eqref{solution-step1-high} (replacing $t_{n+1}$ by $t$), we have that   for any $t\in [t_n,t_{n+1}]$, 
$$
\Pi_{> N}v(t)=\Pi_{> N}v(t_n) -4\pi i\lambda M_0 \int_{t_n}^t \Pi_{>N}v(s)\,ds+\Upsilon_n^4(t),
$$
where 
$$
\Upsilon_n^4(t)\triangleq 4\pi i\lambda M_0 \int_{t_n}^t \Pi_{>N}\big(v(s)-v(t_n)\big)\,ds+\Pi_{> N} R_n^1(t_{n+1}).
$$
Hence,  taking the summation, it gives that 
\begin{align}\label{v-eqs-btN}
\Pi_{> N}v(t)=\Pi_{> N}v^0 -4\pi i\lambda M_0 \int_0^t \Pi_{>N}v(s)\,ds+\Upsilon_4(t),
\end{align}
where by the definition \eqref{def:CR0},
$$
\Upsilon_4(t)\triangleq 4\pi i\lambda M_0\sum\limits_{j=0}^{n-1}\int_{t_j}^{t_{j+1}} \Pi_{>N}\big(v(s)-v(t_j)\big)\,ds+4\pi i\lambda M_0 \int_{t_n}^t \Pi_{>N}\big(v(s)-v(t_n)\big)\,ds+\Pi_{> N} \Upsilon_0(t).
$$
Then combining with \eqref{V-eqs-btN} and \eqref{v-eqs-btN}, we obtain that 
\begin{align}
\mathcal V_{> N}(t)-\Pi_{> N}v(t)=-4\pi i\lambda M_0\int_{t_n}^t \Big( \mathcal V(s)- \Pi_{>N}v(s)\Big)\,ds+\Upsilon_3(t)-\Upsilon_4(t).
\end{align}
Denote that 
$$
\mathcal U_{> N}(t)\triangleq \fe^{it\partial_x^2}\mathcal V_{> N}(t);\quad 
\mathcal E_{> N}(t)= \mathcal U_{> N}(t)-\Pi_{> N}u(t),
$$
then
\begin{align}\label{E-eqs-btN}
\mathcal E_{> N}(t)=-4\pi i\lambda M_0\int_0^t \fe^{i(t-s)\partial_x^2}\mathcal E_{> N}(s)\,ds+\fe^{it\partial_x^2}\Big(\Upsilon_3(t)-\Upsilon_4(t)\Big).
\end{align} 
Then it follows that for any $t\in [0,T]$, 
\begin{align*} 
\big\|\mathcal E_{> N}(t)\big\|_{L^2}\le 4\pi |\lambda| M_0\int_0^t \big\|\mathcal E_{> N}(s)\big\|_{L^2}\,ds+ 
\big\|\Upsilon_3(t)\big\|_{L^\infty_tL^2_x([0,T])}+\big\|\Upsilon_4(t)\big\|_{L^\infty_tL^2_x([0,T])}.
\end{align*}
Therefore, by Gronwall's inequlity, it infers that there exists some constant $C=C(T)>0$, such that 
\begin{align}\label{E-high}
\big\|\mathcal E_{> N}(t)\big\|_{L^\infty_tL^2_x([0,T])}\le C\Big[ 
\big\|\Upsilon_3(t)\big\|_{L^\infty_tL^2_x([0,T])}+\big\|\Upsilon_4(t)\big\|_{L^\infty_tL^2_x([0,T])}\Big].
\end{align}

%
%


\section{Analyzing the errors}
\label{section:errors}

%
%


\subsection{Preliminary estimates}

Let $\gamma>0$ and $u_0\in H^\gamma$.  Then by Lemma \ref{lem:global-theory}, for any fixed $\delta_0>0$, there exists $T_0>0$ such that 
\begin{align}\label{solution-est-small}
\big\|J^\gamma u\big\|_{L^4_{tx}([0,T_0])}
+\big\|J^{\gamma-} u\big\|_{L^6_{tx}([0,T_0])}
\le \delta_0,
\end{align}
where $\delta_0$ is a small constant determined later. Moreover, we set $N$ such that 
\begin{align}\label{tauN2-small}
N^{\gamma}A(N,\tau)\le \delta_0.
\end{align}
Note that the restriction is not necessary for our analysis by replacing a smallness assumption of the spacetime norm of $\mathcal E_{\le N}$, however, we decide to keep this restriction to avoid making our approach vague and complex.

A direct consequence of \eqref{def:XT}, \eqref{solution-est-small} and Lemma \ref{lem:global-theory}  is  
\begin{lemma}\label{lem:JU}
Let $\gamma>0, T\in (0,T_0], N\ge 1$ and $u_0\in H^\gamma$, then there exists  some $C=C(T,\|u_0\|_{H^\gamma})>0$, such that 
\begin{align}\label{est:PN-U}
\big\|J^\gamma\mathcal U_{\le N}\big\|_{L^4_{tx}([0,T])}+\big\|J^{\gamma-}\mathcal U_{\le N}\big\|_{L^6_{tx}([0,T])}
\le C\delta_0\big(1+\mathcal X([0,T])\big).
\end{align}
\end{lemma}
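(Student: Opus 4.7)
The plan is to exploit the decomposition $\mathcal U_{\le N}=\Pi_{\le N}u+\mathcal E_{\le N}$, so that the left-hand side splits into a ``true solution'' contribution and an ``error'' contribution, each of which I can handle separately.

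First I would apply the triangle inequality to get
\[
\big\|J^\gamma\mathcal U_{\le N}\big\|_{L^4_{tx}([0,T])}
\le \big\|J^\gamma\Pi_{\le N}u\big\|_{L^4_{tx}([0,T])}
+\big\|J^\gamma\mathcal E_{\le N}\big\|_{L^4_{tx}([0,T])},
\]
and the analogous splitting for the $L^6_{tx}$ norm with $J^{\gamma-}$. The first piece in each line is controlled directly by the boundedness of $\Pi_{\le N}$ on $L^p$ (via Lemma \ref{lem:multiplier}) together with the smallness assumption \eqref{solution-est-small}; both terms are bounded by $C\delta_0$.

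For the error piece, the key observation is that $\mathcal E_{\le N}$ is spectrally supported in $\{|k|\le N\}$, since both $\mathcal U_{\le N}$ (by construction from \eqref{V-ltN}, noting every term carries a $\Pi_{\le N}$, including $\Upsilon_n^1$ whose constituents in \eqref{def:Rn1}--\eqref{def-R5},\eqref{def-R3} all end with $\Pi_{\le N}$) and $\Pi_{\le N}u$ are. Hence by Bernstein's inequality
\[
\big\|J^\gamma\mathcal E_{\le N}\big\|_{L^4_{tx}}\lesssim N^\gamma\big\|\mathcal E_{\le N}\big\|_{L^4_{tx}},
\qquad
\big\|J^{\gamma-}\mathcal E_{\le N}\big\|_{L^6_{tx}}\lesssim N^\gamma\big\|J^{0-}\mathcal E_{\le N}\big\|_{L^6_{tx}}.
\]
By the definition \eqref{def:XT} of the working norm, both right-hand sides are bounded by $N^\gamma A(N,\tau)\,\mathcal X([0,T])$, and the calibration hypothesis \eqref{tauN2-small} converts this into $\delta_0\,\mathcal X([0,T])$.

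Combining the two contributions gives $C\delta_0+C\delta_0\mathcal X([0,T])\le C\delta_0(1+\mathcal X([0,T]))$, as required. I do not anticipate any real obstacle here; the lemma is essentially a bookkeeping consequence of the decomposition, frequency localization, and the choice of $N$ made in \eqref{tauN2-small}. The only subtlety is verifying that $\mathcal E_{\le N}$ is genuinely frequency-localized to $|k|\le N$, which follows by inspecting that every term appearing in \eqref{V-ltN} and in $\Upsilon_n^1$ is produced by an outer $\Pi_{\le N}$ projector.
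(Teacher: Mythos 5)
Your proposal is correct and follows essentially the same route as the paper: split $\mathcal U_{\le N}=\Pi_{\le N}u+\mathcal E_{\le N}$, bound the solution part by \eqref{solution-est-small} (with $L^p$-boundedness of $\Pi_{\le N}$), and bound the error part by Bernstein, the definition \eqref{def:XT} of $\mathcal X$, and the calibration $N^\gamma A(N,\tau)\le\delta_0$ from \eqref{tauN2-small}. Your explicit check that $\mathcal E_{\le N}$ is frequency-localized to $|k|\le N$ is a detail the paper leaves implicit, but it is the same argument.
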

\begin{proof}
Noting that $A(N,\tau)\le \delta_0 N^{-\gamma}$,  we have that 
\begin{align*}
\big\|J^\gamma\mathcal E_{\le N}\big\|_{L^4_{tx}([0,T])}&+\big\|J^{\gamma-}\mathcal E_{\le N}\big\|_{L^6_{tx}([0,T])}\\
\lesssim 
&
N^\gamma\Big( \big\|\mathcal E_{\le N}\big\|_{L^4_{tx}([0,T])}+\big\|J^{0-}\mathcal E_{\le N}\big\|_{L^6_{tx}([0,T])}\Big)
\\
\lesssim &
N^\gamma A(N,\tau) \mathcal X([0,T])\\
\lesssim & \delta_0\mathcal X([0,T]).
\end{align*}
This further gives that   
\begin{align*}
&\big\|J^\gamma\mathcal U_{\le N}\big\|_{L^4_{tx}([0,T])}+\big\|J^{\gamma-}\mathcal U_{\le N}\big\|_{L^6_{tx}([0,T])}\\
\lesssim 
&
\big\|J^\gamma\mathcal E_{\le N}\big\|_{L^4_{tx}([0,T])}+\big\|J^{\gamma-}\mathcal E_{\le N}\big\|_{L^6_{tx}([0,T])}\\
&\quad+ \big\|J^\gamma\Pi_{\le N}u\big\|_{L^4_{tx}([0,T])}+\big\|J^{\gamma-}\Pi_{\le N}u(t)\big\|_{L^6_{tx}([0,T])}\\
\lesssim &
\delta_0\big(1+\mathcal X([0,T])\big).
\end{align*}
This finishes the proof of the lemma.
\end{proof}

\subsection{Estimates on $\Upsilon_0(t)$}

\begin{proposition}\label{prop:R0}
Let $\gamma>0, T>0, N\ge 1$, $\alpha(\gamma)$ be defined in \eqref{def:alpha}, and $u_0\in H^\gamma$, then there exists  some $C=C(T,\|u_0\|_{H^\gamma})>0$, such that 
$$
\big\|\Upsilon_0(t)\big\|_{L^\infty_tH^\gamma_x([0,T])}
\le C\Big(N^{-\min\{2\gamma,\alpha(\gamma)\}}+\tau N^{-\gamma+}\Big).
$$
\end{proposition}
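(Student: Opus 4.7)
The plan is to decompose $\Upsilon_0(t)$ into the accumulation of local defect terms introduced in Section \ref{section:derivation}, and control each family separately before summing in $n$. The two summands in the target bound correspond to two distinct error mechanisms: the $N^{-\min\{2\gamma,\alpha(\gamma)\}}$ piece captures the trilinear high/low coupling terms of $R_n^1$ (dropped between \eqref{solution} and \eqref{solution-step1-low}), while the $\tau N^{-\gamma+}$ piece captures the phase-freezing remainder $R_n^3$ from \eqref{Def-Rn3} together with the higher-order Taylor defects $R_n^2, R_n^4, R_n^5, R_n^6$.

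For the $R_n^1$ contribution, each integrand matches the trilinear form of the operator $T$ in \eqref{def-T} with the multipliers $m_j$ taken to be the characteristic functions of the frequency bands $\{|k|\le N\}$ or $\{|k|>N\}$, which satisfy \eqref{cond-m} uniformly. The plan is to apply Lemma \ref{lem:tri-est} on each subinterval $[t_n,t_{n+1}]$ after commuting $J^\gamma$ inside via Kato--Ponce (Lemma \ref{lem:kato-Ponce}) to produce an $H^\gamma$-output, then assemble the pieces through $\sum_n\|\cdot\|_{L^3_tL^6_x([t_n,t_{n+1}])}^3=\|\cdot\|_{L^3_tL^6_x([0,T])}^3$ together with almost-orthogonality of the $X^\gamma$-shells. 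Because every $R_n^1$-summand carries at least one high-frequency factor $\Pi_{>N}v$, the dyadic sums in Lemma \ref{lem:tri-est} restrict to the shells $2^{j_1}\gtrsim N$; this transforms the two bounds of that lemma into $N^{-2\gamma}\|J^{\gamma-}e^{is\partial_x^2}v\|_{L^3_tL^6_x([0,T])}^3$ and $N^{-\alpha(\gamma)}\|v\|_{X^\gamma([0,T])}^3$ respectively, which together produce the exponent $\min\{2\gamma,\alpha(\gamma)\}$. The $X^\gamma$-norm of $v$ and the Strichartz norm are finite by Lemma \ref{lem:global-theory}, and the $\|\partial_t v\|_{L^1_tH^\gamma}$-contribution to $X^\gamma$ is controlled using the equation \eqref{pt-v} together with Kato--Ponce.

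For the phase-freezing term $R_n^3$, the decisive observation is that $r_0(s)=\fe^{is\phi}-\fe^{i\tau\phi/2}$ vanishes at $s=\tau/2$, so $r_0(s)=i\phi\int_{\tau/2}^{s}\fe^{it\phi}\,dt$ and the outer time integral over $[t_n,t_{n+1}]$ produces an extra factor of $\tau$. The residual phase $\phi=2(k_1+k_2)(k_1+k_3)$ is then absorbed exactly as inside the proof of Lemma \ref{lem:tri-est} (integration by parts in $s$ in the non-resonant regime, direct estimation in the resonant regime), producing the bound $\tau N^{-\gamma+}$ after summing over $n$; the $0+$ loss reflects the logarithmic dyadic sum. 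The higher-order remainders $R_n^2, R_n^4, R_n^5, R_n^6$ each carry at least two additional $\tau$-factors or a quintic/septic nonlinearity in $\Pi_{\le N}u^n$; Kato--Ponce, Bernstein's inequality, and Lemmas \ref{lem:JU} and \ref{lem:est-vt} bound each summand by $\tau^2 N^{C}$ for a small harmless exponent $C$, and the hypothesis $\tau N\le 1$ absorbs the total into the $\tau N^{-\gamma+}$ term.

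The main technical obstacle is the careful book-keeping that matches the dyadic gain $2^{-\alpha(\gamma)j_1}$ coming from Lemma \ref{lem:tri-est} against the $N^{-\gamma}$ gain from the high-frequency projection, in order to land precisely at the exponent $\min\{2\gamma,\alpha(\gamma)\}$ in the $H^\gamma$-norm rather than in the $L^2$-norm for which Lemma \ref{lem:tri-est} is stated; equally delicate is summing the $X^\gamma$-contribution $\sum_n\|\partial_t v\|_{L^1_tH^\gamma_x([t_n,t_{n+1}])}$ across all $n$ without incurring a logarithmic-in-$L$ loss, for which Lemma \ref{lem:est-vt-2} is essential.
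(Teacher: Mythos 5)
Your decomposition of $\Upsilon_0$ is not the one the statement is about, and this changes the substance of the proof. As \eqref{u-eqs} shows, $\Upsilon_0(t)$ is exactly the accumulation of the Step-1 defects $R_n^1$ (the high/low coupling terms dropped in \eqref{solution-step1}); the remainders $R_n^2,\dots,R_n^6$ enter only through $\Upsilon_n^1$ in \eqref{def:Rn1} and are estimated separately in Proposition \ref{prop:R}, where their size is $\tau^2N^{4-\gamma+}\big(\mathcal Y^3+\mathcal Y^{12}\big)$ — this is precisely why $A(N,\tau)$ in \eqref{def-ANtau} carries the extra term $\tau^2N^{4-\gamma}$ that is absent from Proposition \ref{prop:R0}. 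Your attempt to fold $R_n^2,R_n^4,R_n^5,R_n^6$ and the phase-freezing term $R_n^3$ into $\Upsilon_0$ and absorb them into $\tau N^{-\gamma+}$ using only $\tau N\le 1$ fails quantitatively: Lemma \ref{lem:R3} gives $\sum_n\|R_n^3\|_{L^2}\sim\tau^2N^{4-\gamma+}$, and $\tau^2N^{4-\gamma}\lesssim\tau N^{-\gamma+}$ would require $\tau N^{4}\lesssim 1$, which is violated by the choices $N=\tau^{-2/5}$ or $N=\tau^{-2/(4+\gamma)}$ made in the paper. So under your reading the claimed bound is not reachable by your estimates, and under the correct reading those terms simply do not appear.

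Even for the part of $\Upsilon_0$ you do treat (the trilinear $R_n^1$ terms), the two exponents in the statement come from pieces your argument overlooks. After grouping the trilinear contributions into $T$-shaped combinations as in \eqref{def-T} (with $T_j\in\{\Pi_{\le N},\Pi_{>N}\}$, applied directly on $[0,t]$ — no subinterval splitting or Kato--Ponce commutation is needed, since the defects telescope), two non-$T$ leftovers remain: the resonant term $-2i\lambda\int_0^t\Pi_{\le N}v(s)\int_\T|\Pi_{>N}v(s)|^2\,dx\,ds$ in \eqref{R1-e}, which is handled by Bernstein and is the sole source of $N^{-2\gamma}$ (the high-shell restriction of the first bound in Lemma \ref{lem:tri-est} gives $N^{-3\gamma}$, not $N^{-2\gamma}$ as you assert, and the $T$-estimates alone give $N^{-\min\{3\gamma,\alpha(\gamma)\}+}$); and the time-increment term $4\pi i\lambda M_0\sum_j\int_{t_j}^{t_{j+1}}\Pi_{>N}\big(v(s)-v(t_j)\big)\,ds$ in \eqref{R1-f}, which is the actual origin of $\tau N^{-\gamma+}$ via Bernstein together with Lemma \ref{lem:est-vt-2}. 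Your proposal attributes $\tau N^{-\gamma+}$ to the phase-freezing remainder instead, which is both the wrong term and the wrong size, and it never addresses \eqref{R1-e} or \eqref{R1-f}; without these the exponent $\min\{2\gamma,\alpha(\gamma)\}$ and the $\tau N^{-\gamma+}$ contribution cannot be produced.
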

\begin{proof}
Note that 
\begin{subequations}
\begin{align}
\Upsilon_0(t)
=& -i \lambda \int_0^t\fe^{-is\partial_x^2} \Pi_{>N}\Big[\fe^{-is\partial_x^2}\Pi_{\le N}\bar v(s)\cdot \fe^{is\partial_x^2} \Pi_{\le N}v(s)\cdot \fe^{is\partial_x^2} \Pi_{\le N}v(s)\Big]\,ds\label{R1-a}\\
& -i \lambda \int_0^t\fe^{-is\partial_x^2} \Big[\fe^{-is\partial_x^2}\Pi_{> N}\bar v(s)\cdot \fe^{is\partial_x^2} \Pi_{\le N}v(s)\cdot \fe^{is\partial_x^2}\Pi_{\le N} v(s)\Big]\,ds\\
& -i \lambda \int_0^t\fe^{-is\partial_x^2} \Big[\fe^{-is\partial_x^2}\bar v(s)\cdot \fe^{is\partial_x^2} \Pi_{> N}v(s)\cdot \fe^{is\partial_x^2}\Pi_{\le N} v(s)\Big]\,ds\notag\\
& 
\quad +i \lambda \int_0^t \Pi_{\le N} v(s)\int_\T|\Pi_{> N}v(s)|^2\,dx \,ds
+i \lambda \int_0^t \Pi_{> N} v(s)\int_\T|\Pi_{\le N}v(s)|^2\,dx \,ds\\
& -i \lambda \int_0^t\fe^{-is\partial_x^2} \Big[\fe^{-is\partial_x^2}\bar v(s)\cdot \fe^{is\partial_x^2} v(s)\cdot \fe^{is\partial_x^2}\Pi_{> N} v(s)\Big]\,ds\notag\\
& 
\quad+i \lambda \int_0^t  v(s)\int_\T|\Pi_{> N}v(s)|^2\,dx \,ds
+i \lambda \int_0^t \Pi_{> N} v(s)\int_\T|v(s)|^2\,dx \,ds\label{R1-d}\\
  & -2i \lambda \int_0^t \Pi_{\le N} v(s)\int_\T|\Pi_{> N}v(s)|^2\,dx \,ds   \label{R1-e}\\
  &+ 4\pi i \lambda M_0  \Big( \sum\limits_{j=0}^{n-1}  \int_{t_j}^{t_{j+1}} \Pi_{>N}\>\big[v(s)-v(t_j)\big]\,ds
  + \int_{t_n}^{t} \Pi_{>N}\>\big[v(s)-v(t_n)\big]\,ds\Big).\label{R1-f}
\end{align}
\end{subequations}

First, by Lemmas \ref{lem:global-theory} and \ref{lem:est-vt}, we have that for some $C=C(T,\|u_0\|_{H^\gamma})>0$, 
\begin{align}\label{est:X-T}
\|v\|_{X^{\gamma-}([0,T])}+\left\| \fe^{it\partial_x^2}J^{\gamma-} v\right\|_{L^3_tL^6_x([0,T])}\le C.
\end{align}
Note that \eqref{R1-a}--\eqref{R1-d} obey the structure of \eqref{def-T} with $T_j=\Pi_{\le N}$ or $\Pi_{>N}$.  Therefore, applying Lemma \ref{lem:tri-est}, we obtain that 
\begin{align*}
\big\|\eqref{R1-a}\big\|_{L^2}+\cdots+\big\|\eqref{R1-d}\big\|_{L^2}
\le  CN^{-\min\{3\gamma,\alpha(\gamma)\}+}.
\end{align*}

By Bernstein's inequality, we have that 
\begin{align*}
\big\|\eqref{R1-e}\big\|_{L^2}
\le  CN^{-2\gamma}.
\end{align*}

For \eqref{R1-f},  by Lemma \ref{lem:est-vt-2}, 
\begin{align*}
\big\|\eqref{R1-f}\big\|_{L^2}\lesssim &\tau N^{-\gamma+} \sum\limits_{n=1}^{L-1}\big\|J^{\gamma-}\big(v(t_n+s)-v(t_n)\big)\big\|_{L^\infty_sL^2_{x}([0,\tau]\times \T)}
\le C\tau N^{-\gamma+}.
\end{align*}

This finishes the proof of the proposition.
\end{proof}

\noindent

In the following two subsections, we present the estimate on $\Upsilon(t)$.

\begin{proposition}\label{prop:R}
Let $\gamma>0, T>0, N\ge 1$,  and $u_0\in H^\gamma$, then then there exists  $C=C(T,\|u_0\|_{H^\gamma})>0$, such that 
$$
\big\|\Upsilon(t)\big\|_{L^\infty_tH^\gamma_x([0,T])}
\le C  \tau^2 N^{4-\gamma+}\Big(\mathcal Y([0,T])^3+\mathcal Y([0,T])^{12}\Big).
$$
\end{proposition}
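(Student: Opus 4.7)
The plan is to decompose $\Upsilon(t)$ through \eqref{def:R} and reduce matters to bounding the two sums $\sum_{j=0}^{n-1}\Upsilon_j^1$ and $\sum_{j=0}^{n-1}\Upsilon_j^2(t_{j+1})$ together with the boundary increment at the last subinterval. Since $\Upsilon_j^1$ aggregates via \eqref{def:Rn1} the four high-order defects $R_j^3,R_j^4,R_j^5,R_j^6$, while $\Upsilon_j^2$ is expressed through the internal error $r^j(s)$ of \eqref{def:rn}, I will bound each of these contributions in $H^\gamma_x$, sum over $j=0,\dots,L-1$, and package the discrete sum through H\"older so as to produce the factor $\mathcal Y([0,T])^3$ (for the cubic-in-$u$ pieces) or $\mathcal Y([0,T])^{12}$ (for the higher-degree nonlinear tails). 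Derivatives will be distributed via Lemma \ref{lem:kato-Ponce}, frequency projections handled by Lemma \ref{lem:multiplier}, and Bernstein's inequality invoked to convert the $\Pi_{\le N}$-truncations into the $N^{4-\gamma+}$ derivative budget.

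The central contribution is $R_j^3[u]$. Starting from \eqref{Def-Rn3} with frozen Fourier coefficients of $u=\Pi_{\le N}u^j$, the integral $\int_0^\tau r_0(s)\,ds$ admits the midpoint-quadrature cancellation $\int_0^\tau(\fe^{is\phi}-\fe^{i\tau\phi/2})\,ds=O(\tau^3\phi^2)$, valid because $\int_0^\tau(s-\tau/2)\,ds=0$. Combined with $|\phi|=2|k_1+k_2||k_1+k_3|\lesssim N^2$ on $\Gamma_{k,N}$, this yields the per-step bound $\|R_j^3\|_{L^2}\lesssim \tau^3 N^4\|\Pi_{\le N}u^j\|_{L^6_x}^3$. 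Placing $J^\gamma$ outside the product is absorbed by a factor $N^{-\gamma+}$ paid against the $L^6$ bound on $J^{\gamma-}\Pi_{\le N}u^j$. Summing over $j$ via H\"older in the discrete index against $\tau^{1/2}\|\cdot\|_{l^6_jL^6_x}^3=\mathcal Y([0,T])^3$ then delivers the target bound $\tau^2 N^{4-\gamma+}\mathcal Y([0,T])^3$.

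The remainders $R_j^4$, $R_j^5$, $R_j^6$ already carry explicit prefactors of $\tau^2$ or $\tau^3$ and involve polynomials in $\Pi_{\le N}u^j$ of degrees $5$ or $7$. For $R_j^5$ I will use the expansion $\fe^{i\tau\partial_x^2/2}=1+O(\tau\partial_x^2)$ on each commutator-type difference in \eqref{def-R5}, each supplying a factor of $\tau N^2$, so that $\|R_j^5\|_{L^2}\lesssim\tau^4 N^4\|\Pi_{\le N}u^j\|_{L^6_x}^5$; for $R_j^6$ the Taylor remainder $\Psi_1(x)=O(x^3)$ produces $\tau^3$ multiplying seven factors of $u$. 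These higher-degree contributions assemble through H\"older into powers of $\mathcal Y$ up to $\mathcal Y^{12}$ after matched pairing of $L^6_x$ norms. For $\Upsilon_j^2(t)$, using \eqref{V-ltN} I will first show that $\|r^j(s)\|_{L^2}\lesssim\tau^2\|\Pi_{\le N}u^j\|_{H^\gamma}^3+\|\Upsilon_j^1\|_{L^2}$ on $s\in[t_j,t_{j+1}]$; substitution into \eqref{Rnt} then exhibits $\Upsilon_j^2$ as a product of $r^j$ with at least two further factors of $\mathcal U_{\le N}$, and the same $\mathcal Y$-packaging closes the estimate.

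The main obstacle I anticipate is preserving the sharp derivative budget $N^{4-\gamma+}$ through the iterated Picard-type unfoldings implicit in $R_j^5$ and $\Upsilon_j^2$. Each additional layer of cubic nonlinearity risks losing another $N^\gamma$ through Kato--Ponce, and each extraction of $\tau\partial_x^2$ from the linear propagator costs $N^2$. My strategy is to arrange matters so that every $N$-loss is compensated by the $\Pi_{\le N}$-cutoff on the very factor being differentiated, and to invoke Bernstein only on already-truncated pieces, so that no spurious $N^{1/2}$ terms are generated; in this way the budget $\tau^2 N^{4-\gamma+}$ remains sharp and the proposition follows.
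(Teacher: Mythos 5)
Your overall architecture is exactly the paper's: decompose $\Upsilon$ via \eqref{def:R} into the $\Upsilon_n^1$-part (i.e.\ $R_n^3,\dots,R_n^6$ from \eqref{def:Rn1}) and the $\Upsilon_n^2$-part driven by $r^n$ of \eqref{def:rn}, exploit the midpoint cancellation for $R_n^3$, and package the discrete sums by H\"older against $\mathcal Y([0,T])=\tau^{1/6}\|J^{\gamma-}\Pi_{\le N}u^n\|_{l^6_nL^6_x}$, so that $\tau^3$ per step times $\tau^{-1/2}\cdot\tau^{-1/2}\mathcal Y^3$ yields $\tau^2N^{4-\gamma+}\mathcal Y^3$. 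For $R_n^3$ your phase-side argument (quadrature error $\int_0^\tau(\fe^{is\phi}-\fe^{i\tau\phi/2})\,ds=O(\tau^3\phi^2)$ with $|\phi|\lesssim N^2$ on $\Gamma_{k,N}$) is the Fourier-side twin of the paper's operator decomposition $\fe^{-is\partial_x^2}-\fe^{-i\frac\tau2\partial_x^2}=\fe^{-i\frac\tau2\partial_x^2}\big(-i(s-\frac\tau2)\partial_x^2+\mathcal T_{s-\frac\tau2}\big)$ with $\int_0^\tau(s-\frac\tau2)\,ds=0$ and $\|\mathcal T_s\Pi_{\le N}f\|_{L^p}\lesssim s^2\|J^4\Pi_{\le N}f\|_{L^p}$; just note that in \eqref{Def-Rn3} the amplitudes $\hat u_{k_j}-i\lambda s\,\widehat{(|u|^2u)}_{k_j}$ depend on $s$, so the pure quadrature identity only kills the $s$-independent part, the $s$-dependent corrections being acceptable because they carry extra powers of $\tau$.

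Two of your intermediate claims, however, are not correct as stated. First, the bound $\|r^j(s)\|_{L^2}\lesssim\tau^2\|\Pi_{\le N}u^j\|_{H^\gamma}^3+\|\Upsilon_j^1\|_{L^2}$ fails for $\gamma<2$: in the leading piece \eqref{rn-1} the second power of $\tau$ can only be gained by differencing the propagators at times $s$ and $t_n$, which costs $\partial_x^2\sim N^2$ on the truncated data; the correct per-step bound is of the form $\tau^2N^{2-\gamma+}\|J^{\gamma-}\Pi_{\le N}u^j\|_{L^6}^3$ (this is the content of Lemma \ref{lem:rn}), and no $H^\gamma$-only bound without an $N$-factor is available. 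Second, your $R_j^5$ estimate $\tau^4N^4\|\Pi_{\le N}u^j\|_{L^6}^5$ double-counts the commutator gain: each of the two terms in \eqref{def-R5} contains exactly one commutator-type difference, so beyond the explicit $\tau^2$ you gain one factor $\tau N^{2}$ (plus a Bernstein loss when pairing five $L^6$-factors into $L^2$), i.e.\ $\tau^3N^{2+\frac13-\gamma+}\|J^{\gamma-}\Pi_{\le N}u^j\|_{L^6}^5$ per step as in Lemma \ref{lem:R5}, not $\tau^4N^4$. Neither slip changes the outcome — the corrected bounds are still dominated by $\tau^2N^{4-\gamma+}\big(\mathcal Y^3+\mathcal Y^{12}\big)$ after summation — but as written those two steps would not survive and should be replaced by the $N$-weighted versions above.
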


By \eqref{def:R}, $\Upsilon$ is a combination of $\Upsilon_n^1$ and $\Upsilon_n^2$, the proposition is followed from the estimates given in the subsections below.

\subsection{Estimates on $\Upsilon_n^1$} 
The main estimate in this subsection is 
\begin{lemma}\label{lem:Rn1}
Let  $\gamma>0, N\ge 1$ and $u_0\in H^\gamma$, then there exists  $C=C(T,\|u_0\|_{H^\gamma})>0$, such that 
$$
\sum\limits_{n =0}^{L-1}\big\|\Upsilon_n^1\big\|_{H^\gamma_x}
\le C  \tau^2 N^{4-\gamma+}\Big(\mathcal Y([0,T])^3+\mathcal Y([0,T])^6\Big).
$$
\end{lemma}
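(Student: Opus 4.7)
The plan is to decompose $\Upsilon_n^1$ into its four constituent pieces $R_n^3, R_n^4, R_n^5, R_n^6$ via \eqref{def:Rn1}, estimate each in $H^\gamma_x$ separately, and then sum in $n = 0, 1, \ldots, L-1$. Each block is, by construction, a higher-order Taylor-type remainder: $R_n^3$ carries the error from freezing the Schr\"odinger phase at $\tau/2$ (controlled by $r_0(s) = \fe^{is\phi} - \fe^{i\tau\phi/2}$ in \eqref{def-r0r1}), $R_n^4$ collects the quintic and septic contributions in the expansion of $G_n$ around its linear part, $R_n^5$ encodes the commutator-type discrepancy between $\Pi_{\le N}$ and $\fe^{\pm i\tau\partial_x^2/2}$ acting on $|u|^2 u$-type expressions, and $R_n^6$ is the Taylor remainder $\Psi_1(x) = \fe^x - 1 - x - x^2/2 = O(x^3)$ applied to the phase argument $-i\lambda\tau|\fe^{i\tau\partial_x^2/2}u|^2$.

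The workhorse estimate for $R_n^3$ is the elementary bound $|r_0(s)| \lesssim \tau|\phi|$ for $s \in [0,\tau]$, combined with the low-frequency bound $|\phi| \lesssim N^2$ on $\Gamma_{k,N}$; this converts the outer $\int_0^\tau ds$ into a prefactor of $\tau^2 N^2$. I would then distribute $J^\gamma$ across the three factors in \eqref{Def-Rn3} via the Kato--Ponce inequality (Lemma~\ref{lem:kato-Ponce}) together with Bernstein's inequality (using that everything is supported at frequencies $\le 3N$), producing a pointwise $H^\gamma_x$ bound of the schematic form $\tau^2 N^{2+\gamma}\|\Pi_{\le N}u^n\|_{L^6}^3$ for the cubic part, and analogous bounds with additional $s$ or $\tau$ factors for the quintic, septic, and nonic contributions coming from the $s\widehat{|u|^2 u}$ terms. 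For $R_n^4$ the integrand already carries a $(s-t_n)^2 \lesssim \tau^2$ factor, so the time integral contributes $\tau^3$ overall; for $R_n^6$ one uses $|\Psi_1(x)| \lesssim |x|^3$ with $|x| \lesssim \tau|u|^2$, giving a septic expression with prefactor $\tau^3$. In each case, Bernstein promotes two $u$-factors to $L^\infty$ at a cost of $N$-powers, and mass conservation then absorbs them to leave a cubic or sextic $L^6$-type structure. The term $R_n^5$ is treated analogously, noting that the bracketed difference in \eqref{def-R5} is of commutator type and can again be written with a $\tau N^2$ prefactor via Taylor expansion of $\fe^{i\tau\partial_x^2/2}$.

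The summation in $n$ is then performed via H\"older's inequality: for the cubic contributions one uses
\begin{equation*}
\sum_{n=0}^{L-1}\|u_n\|_{L^6_x}^3 \;\le\; L^{1/2}\|u_n\|_{\ell^6_n L^6_x}^3 \;\lesssim\; \tau^{-1/2}\cdot\tau^{-1/2}\mathcal{Y}([0,T])^3 \;=\; \tau^{-1}\mathcal{Y}([0,T])^3,
\end{equation*}
using $L \sim \tau^{-1}$ and the definition of $\mathcal Y$ in \eqref{def:XT}, and similarly a sixth-power version $\sum_n \|u_n\|_{L^6}^6 \le \|u_n\|_{\ell^6_n L^6_x}^6 \lesssim \tau^{-1}\mathcal Y^6$ for the high-order pieces. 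Multiplying by the pointwise prefactor $\tau^2 N^{2+\gamma}$ (resp. $\tau^3$ times Bernstein-produced $N$-powers for the high-order pieces) gives the stated target after bookkeeping.

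The main obstacle is the derivative counting needed to unify the bound across the entire range $\gamma \in (0,2]$: the naive pointwise estimate $\tau^2 N^{2+\gamma}\|u_n\|_{L^6}^3$ for $R_n^3$, after summation, yields $\tau N^{2+\gamma}\mathcal Y^3$, which matches the target $\tau^2 N^{4-\gamma+}\mathcal Y^3$ only after the assumption $\tau N \le 1$ is used to trade $N$-powers for $\tau$-powers. The delicate balance has to be tracked carefully in the high-regularity regime $\gamma > 1$, where one may need to refine the phase factorization $|\phi| = 2|(k_1+k_2)(k_1+k_3)|$ and integrate by parts on the phase (as is done in Lemma~\ref{lem:tri-est}) rather than just using the crude bound $|\phi| \lesssim N^2$. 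A secondary technical point is ensuring that the septic and nonic pieces truly close with $\mathcal Y^6$ rather than a higher power, which is arranged by placing two of the $u$-factors in $L^\infty_n L^2_x$ via mass conservation before invoking Bernstein.
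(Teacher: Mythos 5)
Your overall architecture matches the paper: decompose $\Upsilon_n^1$ into $R_n^3,\dots,R_n^6$ via \eqref{def:Rn1}, estimate each block per step using Bernstein, mass conservation and the $O(|x|^3)$ bound on $\Psi_1$, and sum in $n$ with the H\"older/$l^6$ device $\sum_n\|\cdot\|_{L^6}^3\le L^{1/2}\|\cdot\|_{l^6_nL^6_x}^3$; your treatment of $R_n^4$, $R_n^5$, $R_n^6$ is essentially the paper's. The gap is in the dominant term $R_n^3$, which is precisely where the factor $\tau^2N^{4-\gamma+}$ in the statement comes from. Your workhorse bound $|r_0(s)|\lesssim\tau|\phi|\lesssim\tau N^2$ gives, per step, at best $\tau^2N^{2-\gamma+}\|J^{\gamma-}\Pi_{\le N}u^n\|_{L^6_x}^3$, hence after summation $\tau N^{2-\gamma+}\mathcal Y([0,T])^3$. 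This does \emph{not} reduce to the target: the needed inequality $\tau N^{2-\gamma}\lesssim\tau^2N^{4-\gamma}$ is equivalent to $\tau N^2\gtrsim1$, which is exactly the \emph{reverse} of the standing smallness condition \eqref{tauN2-small} (which forces $\tau N^2\le1$) and is not implied by $\tau N\le1$ either; so the ``trade $N$-powers for $\tau$-powers'' step goes in the wrong direction. Quantitatively, with the actual choices $N=\tau^{-\frac{2}{4+\gamma}}$ (resp.\ $\tau^{-\frac25}$) your bound yields $\tau^{\frac{3\gamma}{4+\gamma}}$-type contributions, strictly worse than the advertised $\tau^{\frac{4\gamma}{4+\gamma}}$, so the crude phase bound destroys the rates of Theorem \ref{the:main}.

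The missing idea is a midpoint (Strang) cancellation in $s$, not a non-resonance argument in $k$: write
\begin{align*}
\fe^{-is\partial_x^2}-\fe^{-i\frac\tau2\partial_x^2}
=\fe^{-i\frac\tau2\partial_x^2}\Big(-i\big(s-\tfrac\tau2\big)\partial_x^2+\mathcal T_{s-\frac\tau2}\Big),
\qquad
\big\|\mathcal T_s\Pi_{\le N}f\big\|_{L^p}\lesssim s^2\|J^4\Pi_{\le N}f\|_{L^p},
\end{align*}
and use $\int_0^\tau(s-\frac\tau2)\,ds=0$ so that the first-order term drops out against the $s$-independent part of the cubic bracket in \eqref{Def-Rn3} (its $s$-dependent corrections carry extra factors of $\tau$), leaving only the second-order remainder $\mathcal T_{s-\tau/2}$, which costs $(s-\tfrac\tau2)^2\partial_x^4\lesssim\tau^2N^{4-\gamma+}$ after distributing $J^{\gamma-}$ by Bernstein; this gives the per-step bound $\tau^3N^{4-\gamma+}\|J^{\gamma-}\Pi_{\le N}u^n\|_{L^6_x}^3$ that sums to the stated estimate (the same decomposition of $r_0$ into four one-phase differences, each handled this way, is how the paper proceeds in Lemma \ref{lem:R3}). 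Your fallback suggestion of integrating by parts in the phase as in Lemma \ref{lem:tri-est} gains negative powers of the frequency, not positive powers of $\tau$, so it is not the right mechanism here either; the gain must come from the symmetry of the point $\tau/2$ in the time integral.
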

From \eqref{def:Rn1}, it reduces to the estimation on $R_n^j, j=3,\cdots, 6$.
First, we give the estimates on $R_n^3$.
\begin{lemma}\label{lem:R3}
Let $\gamma>0$,  then there exists $C>0$ independent of $\tau, N$ and $u^n$, such that 
$$
\sum\limits_{n =0}^{L-1}\big\|R_n^3[\Pi_{\le N}u^n]\big\|_{L^2_x}
\le C  \tau^2 N^{4-\gamma+}\mathcal Y([0,T])^3.
$$
\end{lemma}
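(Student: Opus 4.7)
My plan is to exploit the midpoint-rule cancellation in $r_0(s)=\fe^{is\phi}-\fe^{i\tau\phi/2}$: since $r_0(\tau/2)=0$, Taylor expansion around $s=\tau/2$ combined with the vanishing of odd moments $\int_{-\tau/2}^{\tau/2}u^p\,du=0$ for odd $p$ gives the quadrature-error bounds
$$
\Big|\int_0^\tau r_0(s)\,ds\Big|\lesssim \tau^3|\phi|^2,\qquad
\Big|\int_0^\tau s^j r_0(s)\,ds\Big|\lesssim \tau^{2+j}|\phi|\quad(j=1,2,3).
$$
I then expand the triple product in \eqref{Def-Rn3} into eight pieces indexed by the subset $I\subseteq\{1,2,3\}$ of factors that carry the cubic correction $i\lambda s\widehat{(|u|^2u)}_{k_j}$, and write $R_n^3[u]=\sum_{j=0}^3 R_n^{3,(j)}[u]$ with $j=|I|$.

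The $j=0$ term is the main contribution. Because $\int_0^\tau r_0(s)\,ds$ depends only on $\phi$, it factors out of the convolution sum, leaving the Fourier multiplier $\tau^3\phi^2$ acting on $\widehat{\bar u}_{k_1}\hat u_{k_2}\hat u_{k_3}$. Using the identity $\phi=2kk_1+2k_2k_3$, the weight $\phi^2$ decomposes into polynomial pieces $(kk_1)^2$, $kk_1\cdot k_2k_3$, and $(k_2k_3)^2$, each of which in position space corresponds to two derivatives distributed over $\{\bar u,u,u\}$ (with a further derivative possibly applied outside via $\Pi_{\le N}$). Kato--Ponce (Lemma \ref{lem:kato-Ponce}) bounds each such $L^2$ norm by expressions of the form $\|J^4 u\|_{L^6}\|u\|_{L^6}^2 + \|J^2 u\|_{L^6}^2\|u\|_{L^6}$, and Bernstein on $\Pi_{\le N}u^n$ gives $\|J^\alpha\Pi_{\le N}u^n\|_{L^6}\lesssim N^{\alpha-\gamma+}\|J^{\gamma-}\Pi_{\le N}u^n\|_{L^6}$. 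The worst pattern yields
$$
\big\|R_n^{3,(0)}[\Pi_{\le N}u^n]\big\|_{L^2}\lesssim \tau^3 N^{4-\gamma+}\big\|J^{\gamma-}\Pi_{\le N}u^n\big\|_{L^6_x}^3.
$$

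To sum over $n$, I set $a_n=\|J^{\gamma-}\Pi_{\le N}u^n\|_{L^6_x}$ and apply Cauchy--Schwarz: $\sum_{n=0}^{L-1}a_n^3\le L^{1/2}\bigl(\sum_n a_n^6\bigr)^{1/2}=T^{1/2}\tau^{-1}\mathcal Y([0,T])^3$, since $L=T/\tau$ and $\sum_n a_n^6=\mathcal Y([0,T])^6/\tau$. Multiplying recovers the target $\tau^2N^{4-\gamma+}\mathcal Y([0,T])^3$. For the higher-order pieces with $j\ge 1$, the same scheme using $|\int s^j r_0\,ds|\lesssim \tau^{2+j}|\phi|$ and Kato--Ponce/Bernstein on the $(3+2j)$-linear nonlinearity gives bounds of the shape $\sum_n\|R_n^{3,(j)}\|_{L^2}\lesssim \tau^{1+j}N^{2-\gamma+}\mathcal Y([0,T])^{3+2j}$, which are absorbed into the main estimate using $\mathcal Y([0,T])\lesssim 1$ (from Lemma \ref{lem:est-vt} combined with the smallness assumption \eqref{tauN2-small}, and after the bootstrap that allows replacing $u^n$ by $u(t_n)$).

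The main technical point is the sharp derivative accounting in the $j=0$ piece: $\phi^2\lesssim N^4$ yields the factor $N^{4-\gamma+}$ --- and not something worse --- only if no more than four derivatives ever land on any single $\Pi_{\le N}u^n$ factor after $\phi^2$ is distributed. Verifying this bookkeeping across every Kato--Ponce pattern, which is made possible by the algebraic identity $\phi=2kk_1+2k_2k_3$, is the delicate step. Everything else is either Taylor expansion of the midpoint quadrature or standard H\"older summation, and the accounting is tight: the three orders of $\tau$ saved by the quadrature cancellation minus the one order lost to the $\ell^6_n\hookrightarrow\ell^3_n$ summation leaves exactly the sharp $\tau^2$.
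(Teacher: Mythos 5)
Your overall strategy coincides with the paper's: the midpoint cancellation in $r_0$, the derivative bookkeeping through the factorized phase $\phi=2kk_1+2k_2k_3$ giving $N^{4-\gamma+}$, and the $\ell^6_n\to\ell^3_n$ H\"older step ($\tau^3\cdot\tau^{-1}=\tau^2$) are exactly the counting behind Lemma \ref{lem:R3}. However, two steps do not hold up as written. First, in the $j=0$ piece you pass from the pointwise bound $\big|\int_0^\tau r_0(s)\,ds\big|\lesssim\tau^3\phi^2$ to ``the Fourier multiplier $\tau^3\phi^2$'' and then invoke Kato--Ponce. The actual multiplier is $\int_0^\tau r_0(s)\,ds=\tau\,\fe^{i\tau\phi/2}\big(\tfrac{\sin(\tau\phi/2)}{\tau\phi/2}-1\big)$, a genuinely multilinear symbol of $(k,k_1,k_2,k_3)$ which is only \emph{bounded} by $\tau^3\phi^2$, not equal to it times a harmless factor that splits into single-frequency multipliers; a pointwise symbol bound does not license Kato--Ponce/Bernstein at the $L^6$ level (majorant tricks only survive in $L^2$, and there they cost extra powers of $N$). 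To make this rigorous you must keep an exact representation, e.g. second-order Taylor in $s$ about $\tau/2$ with integral remainder, $\fe^{is\phi}-\fe^{i\tau\phi/2}=i(s-\tfrac\tau2)\phi\,\fe^{i\tau\phi/2}-\int_{\tau/2}^{s}(s-w)\,\phi^2\,\fe^{iw\phi}\,dw$, where $\fe^{iw\phi}$ factors into free propagators on each function and $\phi,\phi^2$ factor into derivatives. This is precisely the paper's device: it splits $r_0$ into single-frequency differences, writes $\fe^{-is\partial_x^2}-\fe^{-i\frac\tau2\partial_x^2}=\fe^{-i\frac\tau2\partial_x^2}\big(-i(s-\tfrac\tau2)\partial_x^2+\mathcal T_{s-\frac\tau2}\big)$ with $\|\mathcal T_s\Pi_{\le N}f\|_{L^p}\lesssim s^2\|J^4\Pi_{\le N}f\|_{L^p}$ via Lemma \ref{lem:multiplier}, and recovers your midpoint cancellation as $\int_0^\tau(s-\tfrac\tau2)\,ds=0$.

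Second, and more serious for the logic of the paper, your absorption of the $j\ge1$ pieces ``using $\mathcal Y([0,T])\lesssim1$ \dots after the bootstrap that allows replacing $u^n$ by $u(t_n)$'' is circular: the smallness $\mathcal Y\lesssim\delta_0$ is only produced in Section 6 by a bootstrap whose input is Lemma \ref{lem:Rn1}, hence this very lemma, and at this stage no bound on $\mathcal Y$ or on $u^n-u(t_n)$ is available (Lemma \ref{lem:est-vt} concerns the exact solution $u(t_n)$, not $u^n$). Without such a bound, estimates of the shape $\tau^{1+j}N^{2-\gamma+}\mathcal Y^{3+2j}$ cannot be folded into $\tau^2N^{4-\gamma+}\mathcal Y^3$; the only a priori control, $\mathcal Y\lesssim N^{\gamma+\frac13}$ from mass and Bernstein, is not sufficient for $j\ge2$ or larger $\gamma$. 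The paper needs no smallness of $\mathcal Y$ here: the cubic corrections are put in $L^\infty_x$ and controlled by the scheme's mass bound $\|u^n\|_{L^2}\le\|u^0\|_{L^2}$, Bernstein, and \eqref{tauN2-small}, so that $\tau\|\Pi_{\le N}u^n\|_{L^\infty_x}^2\lesssim\tau N\lesssim1$ and the whole bracket contributes $O(1)$, leaving exactly three $L^6$ factors and hence $\mathcal Y^3$. Your argument is repairable either that way, or by restating the lemma with higher powers of $\mathcal Y$ and propagating them (the framework tolerates this, cf.\ the $\mathcal Y^3+\mathcal Y^6$ in Lemma \ref{lem:Rn1}), but as written the absorption step is a gap.
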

\begin{proof} 
We write $u=\Pi_{\le N}u^n$ for short. 
Note that 
\begin{align*}
r_0(s)=&\Big(\fe^{isk^2}-\fe^{i\frac\tau2k^2}\Big)+\fe^{isk^2}\Big(\fe^{isk_1^2}-\fe^{i\frac\tau2k_1^2}\Big)\\
&\quad +\fe^{is(k^2+k_1^2)}\Big(\fe^{-isk_2^2}-\fe^{-i\frac\tau2k_2^2}\Big)
+\fe^{is(k^2+k_1^2-k_2^2)}\Big(\fe^{-isk_3^2}-\fe^{-i\frac\tau2k_3^2}\Big).
\end{align*}
We only focus on the first piece of $r_0(s)$ above, which allows us to rewrite 
\begin{align*}
R_n^3[u]= & -i \lambda \int_0^\tau\fe^{-it_n\partial_x^2}\Big(\fe^{-is\partial_x^2}-\fe^{-i\frac\tau2\partial_x^2}\Big) 
        \Big[\Big|\fe^{i\frac\tau2\partial_x^2}\Big(u-i\lambda s\big(|u|^2 u\big)\Big)\Big|^2
        \\
        &\qquad \cdot \fe^{i\frac\tau2\partial_x^2}\Big(u-i\lambda s\big(|u|^2u\big)\Big)\Big]\,ds\\
        &+\mbox{other similar terms}.
\end{align*}
Here the ``other similar terms'' are the terms can be treated in similar manner. 
In particular, we expand 
$$
\fe^{isk^2}\Big(\fe^{isk_1^2}-\fe^{i\frac\tau2k_1^2}\Big)=\Big(\fe^{isk_1^2}-\fe^{i\frac\tau2k_1^2}\Big)+\Big(\fe^{isk^2}-\fe^{i\frac\tau2k^2}\Big)\Big(\fe^{isk_1^2}-\fe^{i\frac\tau2k_1^2}\Big).
$$
Then we can deal with it by the following same manner.

For convenience, we denote the operator $\mathcal T_s$ as 
$$
\mathcal T_s f=\Big(\fe^{-is\partial_x^2}-1+is\partial_x^2\Big)f.
$$
Then by Lemma \ref{lem:multiplier}, we have that for $1<p<\infty$, 
\begin{align}\label{est:opr-Ts}
\big\|\mathcal T_s \Pi_{\le N}f\big\|_{L^p}\lesssim s^2 \|J^4 \Pi_{\le N}f\|_{L^p},
\end{align}
and 
\begin{align}\label{est:opr-e}
\Big\|\big(\fe^{-is\partial_x^2}-1\big) \Pi_{\le N}f\Big\|_{L^p}\lesssim s \|J^2 \Pi_{\le N}f\|_{L^p}.
\end{align}
Moreover, 
$$
\fe^{-is\partial_x^2}-\fe^{-i\frac\tau2\partial_x^2}=\fe^{-i\frac\tau2\partial_x^2}\Big(-i(s-\frac\tau 2)\partial_x^2+\mathcal T_{s-\frac\tau2}\Big).
$$
Then we can further rewrite 
\begin{align*}
R_n^3[u]= & -i \lambda \int_0^\tau\fe^{-it_n\partial_x^2}\>\fe^{-i\frac\tau2\partial_x^2}\Big(-i(s-\frac\tau 2)\partial_x^2+\mathcal T_{s-\frac\tau2}\Big) 
        \\
        &\qquad 
       \cdot \Big[\Big| \fe^{i\frac\tau2\partial_x^2}\Big(u-i\lambda s\big(|u|^2u\big)\Big) \Big|^2
        \fe^{i\frac\tau2\partial_x^2}\Big(u-i\lambda s \big(|u|^2u\big)\Big)\Big]\,ds\\
        &+\mbox{other similar terms}.
\end{align*}
Therefore, using \eqref{est:opr-Ts} and \eqref{est:opr-e}, H\"older and Bernstein's inequalities,  and noting that 
$$
\int_0^\tau (s-\frac\tau2)\,ds=0,
$$
$\|u^n\|_{L^2_x}\le \|u_0\|_{L^2_x}$, we have that 
\begin{align*}
\big\|R_n^3[u]\big\|_{L^2}
\lesssim 
&\tau^3 N^{4-\gamma+}\Big(1+\tau \|u\|_{L^\infty_x}^2\Big)^3 \|J^{\gamma-} u\|_{L^6_x}^3\\
\lesssim 
&\tau^3 N^{4-\gamma+}\Big(1+\tau N \|u\|_{L^2_x}^2\Big)^3\|J^{\gamma-}u\|_{L^6_x}^3\\
\lesssim 
&\tau^3 N^{4-\gamma+}\big(1+\tau N\big)^3\|J^{\gamma-}u\|_{L^6_x}^3\\
\lesssim 
&\tau^3 N^{4-\gamma+}\big(1+\tau N^2\big)^3\|J^{\gamma-}u\|_{L^6_x}^3.
\end{align*}
Note that by \eqref{tauN2-small}, we have that $\tau N^2\le 1$, thus 
\begin{align*}
\big\|R_n^3[u]\big\|_{L^2}
\lesssim 
&\tau^3 N^{4-\gamma+}\|J^{\gamma-}u\|_{L^6_x}^3.
\end{align*}

Now taking the summation,  by Cauchy-Schwarz's inequality,
we get that 
\begin{align*}
\sum\limits_{n =0}^{L-1}\big\|R_n^3[\Pi_{\le N}u^n]\big\|_{L^2_x}
\lesssim 
&\tau^3 N^{4-\gamma+}\sum\limits_{n =0}^{L-1} \big\|J^{\gamma-} \Pi_{\le N}u^n\big\|_{L^6_x}^3\\
\lesssim 
&\tau^\frac52 N^{4-\gamma+}\|J^{\gamma-}\Pi_{\le N}u^n\|_{l^6_nL^6_x}^3\\
\lesssim &
 \tau^2 N^{4-\gamma+}\mathcal Y([0,T])^3.
\end{align*}
This finishes the proof of the lemma.
\end{proof}

Second, we give the estimates on $R_n^4$.
\begin{lemma}\label{lem:R4}
Let $\gamma>0$,  then  there exists  $C>0$ independent of $\tau, N$ and $u^n$, such that 
$$
\sum\limits_{n =0}^{L-1}\big\|R_n^4[\Pi_{\le N}u^n]\big\|_{L^2_x}
\le C \tau^2 N \Big(1+\tau N\Big)\mathcal Y([0,T])^6.
$$
\end{lemma}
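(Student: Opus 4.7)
The plan is to unfold $R_n^4[u]$ with $u=\Pi_{\le N}u^n$ into a sum of multilinear pieces in $u$ and bound each piece by H\"older's inequality combined with the Bernstein estimate $\|\Pi_{\le N} f\|_{L^\infty_x}\lesssim N^{1/2}\|f\|_{L^2_x}$. The first integral defining $R_n^4$ splits according to whether the leading term $\bar u$ or the cubic tail $i\lambda(s-t_n)\Pi_{\le N}[|u|^2\bar u]$ is taken from the first parenthesis: this produces a five-linear piece carrying the temporal weight $(s-t_n)^2$ and a seven-linear piece carrying $(s-t_n)^3$. The second integral in $R_n^4$ is already seven-linear with weight $(s-t_n)^2$, built from $|\Pi_{\le N}[|u|^2\bar u]|^2\cdot\fe^{i\tau/2\partial_x^2}u$.

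For each piece I will take $L^2_x$-norms and distribute by H\"older: the two cubic bundles $\Pi_{\le N}[|u|^2u]$ or $\Pi_{\le N}[|u|^2\bar u]$ go in $L^\infty_x$, while the remaining factor (either $\bar u$, $u$, or a third cubic bundle) goes in $L^2_x$ and is controlled by $\|u\|_{L^2_x}\le\|u_0\|_{L^2_x}$ or by $\|u\|_{L^6_x}^3$. Bernstein upgrades each $L^\infty_x$-norm of a cubic bundle to $N^{1/2}\|u\|_{L^6_x}^3$, and integration in $s$ over $[t_n,t_{n+1}]$ contributes $\tau^3$ or $\tau^4$. Taking account of $\fe^{-i(t_n+\tau/2)\partial_x^2}$ being an $L^2_x$-isometry, the five-linear piece is bounded by $\tau^3 N\|u\|_{L^6_x}^6\|u_0\|_{L^2}$ and each seven-linear piece by $\tau^4 N\|u\|_{L^6_x}^9$ (or $\tau^3 N\|u\|_{L^6_x}^6\|u_0\|_{L^2}$ for the one with a free $u$).

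Finally I sum over $n=0,\ldots,L-1$ and invoke discrete H\"older (with $\tau L\le T$) to trade $\tau\sum_n\|\Pi_{\le N}u^n\|_{L^6_x}^6$ for $\mathcal Y([0,T])^6$. The five-linear contributions then become $\tau^2 N\,\mathcal Y([0,T])^6$. For the nine-factor bound $\tau^4 N\|u\|_{L^6_x}^9$ I cannot close the sum directly, so I demote one cubic bundle by the extra Bernstein step $\|u\|_{L^6_x}^3\lesssim N\|u\|_{L^2_x}^3\lesssim N$; this upgrades the septic bound to $\tau^4 N^2\|u\|_{L^6_x}^6=\tau^2 N(\tau N)\|u\|_{L^6_x}^6$ and yields $\tau^2 N(\tau N)\,\mathcal Y([0,T])^6$ after summation. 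Adding the two gives exactly $\tau^2 N(1+\tau N)\,\mathcal Y([0,T])^6$.

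The main obstacle is the bookkeeping for the seven-linear pieces: a naive H\"older/Bernstein estimate returns nine $L^6_x$-factors, which exceeds the target power $\mathcal Y^6$. Demoting one cubic bundle via Bernstein costs one factor of $N$, and this is precisely what produces the second summand $\tau N$ inside $(1+\tau N)$; all other steps are routine applications of H\"older's inequality, the mass bound, and Lemma~\ref{lem:multiplier}.
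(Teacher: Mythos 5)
Your proposal is correct and follows essentially the paper's proof: H\"older plus Bernstein together with the mass bound $\|u^n\|_{L^2}\le\|u^0\|_{L^2}$ give the per-step estimate $\big\|R_n^4[\Pi_{\le N}u^n]\big\|_{L^2}\lesssim \tau^3\big(N+\tau N^2\big)\big\|\Pi_{\le N}u^n\big\|_{L^6}^6$, and summing in $n$ yields $\tau^2N(1+\tau N)\mathcal Y([0,T])^6$; the paper only differs cosmetically, running H\"older through $\|\Pi_{\le N}u^n\|_{L^{14}}^7$ and $\|\Pi_{\le N}u^n\|_{L^{18}}^9$ instead of putting the cubic bundles in $L^\infty_x$ via Bernstein. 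One labeling slip worth fixing: the pieces you call ``five-linear'' and ``seven-linear'' from the first integral are in fact seven- and nine-linear in $u$ (as your own bounds $\tau^3N\|u\|_{L^6}^6\|u^0\|_{L^2}$ and $\tau^4N\|u\|_{L^6}^9$ correctly reflect).
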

\begin{proof} 
By H\"older and Bernstein's inequalities, and noting that $\|u^n\|_{L^2_x}\le \|u_0\|_{L^2_x}$, 
we have that 
\begin{align*}
\big\|R_n^4[\Pi_{\le N}u^n]\big\|_{L^2}
\lesssim 
&\tau^3 \Big( \big\|\Pi_{\le N}u^n\big\|_{L^{14}_x}^7+\tau \big\|\Pi_{\le N}u^n\big\|_{L^{18}_x}^9\Big)\\
\lesssim 
&\tau^3 \Big(N\big\|\Pi_{\le N}u^n\big\|_{L^2_x}\big\|\Pi_{\le N}u^n\big\|_{L^{6}_x}^6+\tau N^2 \big\|\Pi_{\le N}u^n\big\|_{L^2_x}^3\big\|\Pi_{\le N}u^n\big\|_{L^{6}_x}^6\Big)\\
\lesssim 
&\tau^3 \big(N+\tau N^2 \big)\big\|\Pi_{\le N}u^n\big\|_{L^{6}_x}^6.
\end{align*}
This implies that 
\begin{align*}
\sum\limits_{n =0}^{L-1}\big\|R_n^4[\Pi_{\le N}u^n]\big\|_{L^2_x}
\lesssim &
\tau^2 \Big(N+\tau N^2\Big)\mathcal Y([0,T])^6.
\end{align*}
This finishes the proof of the lemma.  
\end{proof}

Third, we give the estimates on $R_n^5$.
\begin{lemma}\label{lem:R5}
Let $\gamma>0$,  then there exists  $C>0$ independent of $\tau, N$ and $u^n$, such that 
$$
\sum\limits_{n =0}^{L-1}\big\|R_n^5[\Pi_{\le N}u^n]\big\|_{L^2_x}
\le C \tau^2N^{\frac{7}{2}-\gamma+}  \mathcal Y([0,T])^5.
$$
\end{lemma}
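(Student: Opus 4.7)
The estimate for $R_n^5$ rests on a commutator structure already built into the definition \eqref{def-R5}. Each of the two brackets in $R_n^5[u]$ is a difference of the form
$$
\mathcal{C} \;=\; \fe^{\mp i\tau\partial_x^2/2}\Pi_{\le N}\big(|u|^2 u^{(*)}\big)\;-\;|\fe^{i\tau\partial_x^2/2}u|^2\,\fe^{\mp i\tau\partial_x^2/2}u^{(*)},
$$
which vanishes at $\tau=0$. The plan is to Taylor-expand $\fe^{\pm i\tau\partial_x^2/2}=I\pm i(\tau/2)\partial_x^2+\mathcal{T}^{(\pm)}_{\tau/2}$ with remainder controlled by $\|\mathcal{T}^{(\pm)}_s\Pi_{\le N}f\|_{L^p}\lesssim s^2\|J^4\Pi_{\le N}f\|_{L^p}$ (the bound \eqref{est:opr-Ts} from Lemma \ref{lem:R3}'s proof), and verify that the zeroth-order and first-order (in $\tau$) contributions cancel on both sides of $\mathcal{C}$ via the Leibniz rule for $\partial_x^2$. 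What survives is
$$
\mathcal{C}\;=\;-i\tau\bigl[u^{(*)}\,{}^{2}\,\partial_x^2 u+2u^{(*)}(\partial_x u)(\partial_x u^{(*)})+u(\partial_x u^{(*)})^{2}\bigr]+\mathcal{R}_\tau,
$$
with $\mathcal{R}_\tau$ of size $\tau^2 N^4 |u|^3$ from the $\mathcal{T}^{(\pm)}_{\tau/2}$ contributions. Multiplied by the explicit $\tau^2$ prefactor in \eqref{def-R5}, the leading term of $R_n^5[u]$ is therefore $O(\tau^3)$ times a quintic-in-$u$ expression with two extra derivatives.

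Next I would estimate the $L^2$ norm of $\tau^3\,\Pi_{\le N}\bigl(\partial_x^\alpha u\cdot\partial_x^\beta u\cdot u\cdot u\cdot u\bigr)$ with $\alpha+\beta=2$. Since $\fe^{-is\partial_x^2}$ is an $L^2$-isometry and $\Pi_{\le N}$ is bounded on $L^p$ by Lemma \ref{lem:multiplier}, one can drop the outer propagators and work directly with the five-fold product. I would apply H\"older's inequality to distribute the product across $L^6$ and $L^\infty$ slots (say three factors in $L^6_x$ and two in $L^\infty_x$, since $3/6+0=1/2$), then use Bernstein's inequality on the band-limited factors to trade derivatives and $L^p$ exponents for powers of $N$: on the factor carrying two derivatives, $\|\partial_x^2 \Pi_{\le N}u\|_{L^6}\lesssim N^{2-\gamma+}\|J^{\gamma-}\Pi_{\le N}u\|_{L^6}$, and on the two $L^\infty$ slots, $\|\Pi_{\le N}u\|_{L^\infty}\lesssim N^{1/2}\|u\|_{L^2}$, bounded by the mass conservation $\|u^n\|_{L^2}\le\|u^0\|_{L^2}$. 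The remaining $L^6$ factors can each be written as $\|\Pi_{\le N}u\|_{L^6}\lesssim\|J^{\gamma-}\Pi_{\le N}u\|_{L^6}$ using that $J^{-\gamma+}$ is an $L^6$-bounded multiplier for $\gamma>0$. When $\alpha=\beta=1$, Bernstein on both derivative factors produces the same power of $N$. Collecting factors gives the per-step bound
$$
\|R_n^5[\Pi_{\le N}u^n]\|_{L^2_x}\;\lesssim\;\tau^{3}\,N^{7/2-\gamma+}\,\|J^{\gamma-}\Pi_{\le N}u^n\|_{L^6_x}^{5}.
$$
The remainder $\mathcal{R}_\tau$ contributes an additional $\tau^4 N^{5-\gamma+}\|J^{\gamma-}u\|_{L^6}^{3}$, which is controlled using the smallness assumption $\tau N^2\le 1$ from \eqref{tauN2-small}.

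Finally, sum over $n$ using H\"older's inequality in the discrete time index:
$$
\sum_{n=0}^{L-1}\|J^{\gamma-}\Pi_{\le N}u^n\|_{L^6_x}^{5}\;\le\;L^{1/6}\,\|J^{\gamma-}\Pi_{\le N}u^n\|_{l^6_n L^6_x}^{5}\;=\;\tau^{-1/6}\cdot \tau^{-5/6}\mathcal{Y}([0,T])^5\;=\;\tau^{-1}\mathcal{Y}([0,T])^5,
$$
where I have used $L=T/\tau$ and the definition of $\mathcal{Y}$ from \eqref{def:XT}. Multiplying by the $\tau^3 N^{7/2-\gamma+}$ from the per-step bound produces the claimed total $\tau^{2}N^{7/2-\gamma+}\mathcal{Y}([0,T])^{5}$.

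The main obstacle is Step~1: correctly identifying the commutator and verifying the cancellation of both the zeroth- and first-order Taylor terms, since the Schr\"odinger propagator does not commute with pointwise multiplication, and the signs of $\pm i\tau/2$ differ across the three factors of the cubic nonlinearity. After that, the only care needed is the H\"older/Bernstein exponent bookkeeping in Step~2 to land exactly on $N^{7/2-\gamma+}$ with five factors of $\|J^{\gamma-}\Pi_{\le N}u\|_{L^6_x}$ on the right, matching the structure of $\mathcal{Y}^5$.
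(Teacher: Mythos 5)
Your overall route is the same as the paper's: exploit that the two brackets in \eqref{def-R5} are commutator-type differences that gain a factor $\tau$ at the cost of two derivatives (hence $\tau N^2$ on frequencies $\lesssim N$), estimate the resulting quintic product by H\"older and Bernstein, and convert the sum over $n$ into powers of $\mathcal Y([0,T])$ by H\"older in the discrete index; your step $\sum_n a_n^5\le L^{1/6}\|a\|_{l^6_n}^5=\tau^{-1}\mathcal Y([0,T])^5$ is exactly the paper's summation. The paper does the per-step bound more directly: H\"older $L^6\times L^6\times L^6\to L^2$, then the operator bound \eqref{est:opr-e}, $\|(\fe^{-is\partial_x^2}-1)\Pi_{\le N}f\|_{L^p}\lesssim s\|J^2\Pi_{\le N}f\|_{L^p}$, applied to the propagator differences, with no second-order Taylor expansion and no cancellation claims.

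Two points in your write-up need repair. First, the bracket does \emph{not} vanish at $\tau=0$: its first member carries an inner $\Pi_{\le N}$ that the second member lacks, so at $\tau=0$ the bracket equals $-\Pi_{>N}\big(|u|^2u\big)$ (or its conjugate version), which is nonzero since $|u|^2u$ has frequencies up to $3N$. Hence your asserted cancellation of the zeroth-order Taylor terms is false, and the expansion $\mathcal C=-i\tau[\cdots]+\mathcal R_\tau$ is missing this projection tail; you must either estimate it separately or set up the commutator between $\fe^{\mp i\frac\tau2\partial_x^2}\big(|u|^2u\big)$ (no inner projection) and the product of propagated factors, treating $\Pi_{>N}(|u|^2u)$ on its own. (The paper's proof is admittedly terse on this same point, invoking \eqref{est:opr-e} directly. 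Your claimed first-order cancellation is also not real---your own surviving term is first order in $\tau$---but that is harmless, since only the $O(\tau N^2)$ size is needed.) Second, the exponent bookkeeping as written does not produce the stated right-hand side: if the two $L^\infty$ slots are paid with $\|\Pi_{\le N}u\|_{L^\infty}\lesssim N^{1/2}\|u\|_{L^2}$ and mass conservation, you retain only three factors of $\|J^{\gamma-}\Pi_{\le N}u^n\|_{L^6_x}$, so the argument ends with $\mathcal Y([0,T])^3$ rather than the $\mathcal Y([0,T])^5$ demanded by the lemma (and a cubic bound does not imply the quintic one, since $\mathcal Y$ is small in the bootstrap). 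Use instead $\|\Pi_{\le N}u\|_{L^\infty}\lesssim N^{1/6}\|\Pi_{\le N}u\|_{L^6}\lesssim N^{1/6}\|J^{\gamma-}\Pi_{\le N}u\|_{L^6}$ for $\gamma>0$ (Lemma \ref{lem:multiplier}); this gives the per-step bound $\tau^3N^{\frac73-\gamma+}\|J^{\gamma-}\Pi_{\le N}u^n\|_{L^6_x}^5$, which is the paper's estimate and is even stronger than the stated $N^{\frac72-\gamma+}$ after summation.
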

\begin{proof} 
By H\"older and Bernstein's inequalities,   \eqref{est:opr-e} and $\tau N^2\le 1$, 
we have that 
\begin{align*}
\big\|R_n^5[\Pi_{\le N}u^n]\big\|_{L^2}
\lesssim 
&\tau^2 \big\|\fe^{\frac i2\tau\partial_x^2}\Pi_{\le N}u^n\big\|_{L^6_x}^2\\
&\quad\cdot \Big\|\fe^{\frac i2\tau\partial_x^2}\Pi_{\le N}\big(|\Pi_{\le N}u^n|^2 \Pi_{\le N}u^n\big)-\big|\fe^{\frac i2\tau\partial_x^2}\Pi_{\le N}u^n\big|^2\fe^{\frac i2\tau\partial_x^2}\Pi_{\le N}u^n\Big\|_{L^6_x}\\
\lesssim 
&\tau^3 N^{2-\gamma+} N^\frac13 \big\|J^{\gamma-}\Pi_{\le N}u^n\big\|_{L^6_x}^5.
\end{align*}
This  implies that 
\begin{align*}
\sum\limits_{n =0}^{L-1}\big\|R_n^5[\Pi_{\le N}u^n]\big\|_{L^2_x}
\lesssim &
\tau^2N^{\frac{7}{2}-\gamma+} \mathcal Y([0,T])^5.
\end{align*}
This finishes the proof of the lemma.  
\end{proof}

%
%

Fourth, we give the estimates on $R_n^6$.
\begin{lemma}\label{lem:R6}
Let $\gamma>0$,  then there exists  some $C>0$ independent of $\tau, N$ and $u^n$, such that 
$$
\sum\limits_{n =0}^{L-1}\big\|R^6_n[\Pi_{\le N}u^n]\big\|_{L^2(\T)}
\le C\tau^2 N \mathcal Y([0,T])^6.
$$
\end{lemma}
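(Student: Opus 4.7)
The plan is to mirror the derivation style of Lemmas \ref{lem:R3}--\ref{lem:R5}: reduce to a pointwise estimate on $\Psi_1$, invoke boundedness of the linear flow and projector in $L^2$, apply H\"older and Bernstein in a way tailored to the $\mathcal Y$-norm, and finally sum using the uniform mass bound $\|\Pi_{\le N}u^n\|_{L^2_x}\le \|u^0\|_{L^2_x}$.

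First I would record the elementary pointwise inequality
\[
|\Psi_1(-i\lambda \tau|w|^2)|\le \tfrac16(\lambda\tau|w|^2)^3=\tfrac16\tau^3|w|^6,
\]
which follows from $|\fe^{ix}-1-ix-\tfrac12(ix)^2|\le |x|^3/6$ for real $x$, applied with $w=\fe^{\frac{i}{2}\tau\partial_x^2}u$ and $u=\Pi_{\le N}u^n$. Because $\fe^{-it_{n+1}\partial_x^2}$ and $\fe^{\frac{i}{2}\tau\partial_x^2}$ are $L^2$-isometries and $\Pi_{\le N}$ is an $L^2$-contraction, this already yields
\[
\big\|R_n^6[\Pi_{\le N}u^n]\big\|_{L^2_x}\lesssim \tau^3\,\big\|w^7\big\|_{L^2_x}=\tau^3\|w\|_{L^{14}_x}^{7}.
\]

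Next I would estimate $\|w\|_{L^{14}_x}^7$ by splitting it as $\|w\|_{L^\infty_x}\|w\|_{L^{12}_x}^6$ via H\"older, and then using Bernstein's inequality (available since $w$ has Fourier support in $|k|\le N$): $\|w\|_{L^\infty_x}\lesssim N^{1/2}\|w\|_{L^2_x}\lesssim N^{1/2}\|u^n\|_{L^2_x}$ and $\|w\|_{L^{12}_x}\lesssim N^{1/12}\|w\|_{L^{6}_x}\lesssim N^{1/12}\|\Pi_{\le N}u^n\|_{L^{6}_x}$. Combining with the uniform bound $\|u^n\|_{L^2_x}\le\|u^0\|_{L^2_x}$ gives
\[
\big\|R_n^6[\Pi_{\le N}u^n]\big\|_{L^2_x}\lesssim \tau^3\, N^{1/2}\cdot N^{1/2}\|\Pi_{\le N}u^n\|_{L^{6}_x}^6=\tau^3 N\,\|\Pi_{\le N}u^n\|_{L^{6}_x}^6.
\]

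Finally I would sum in $n$ and absorb the $J^{\gamma-}$ factor trivially since $\gamma>0$ implies $\|\Pi_{\le N}u^n\|_{L^6_x}\le \|J^{\gamma-}\Pi_{\le N}u^n\|_{L^6_x}$. Then
\[
\sum_{n=0}^{L-1}\big\|R_n^6[\Pi_{\le N}u^n]\big\|_{L^2_x}
\lesssim \tau^3 N\sum_{n=0}^{L-1}\|J^{\gamma-}\Pi_{\le N}u^n\|_{L^{6}_x}^{6}
=\tau^2 N\cdot\Big(\tau^{\frac16}\|J^{\gamma-}\Pi_{\le N}u^n\|_{l^{6}_nL^{6}_x}\Big)^{6}=\tau^2 N\,\mathcal Y([0,T])^6,
\]
which is the desired bound. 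I do not anticipate a genuine obstacle here: the only conceptual choice is the H\"older split $L^{14}=L^\infty\cdot L^{12}$, chosen precisely so that Bernstein contributes $N^{1/2}\cdot N^{1/2}=N^{1}$ rather than the slightly worse $N^{2/3}$ one would get from the more naive $L^{14}$-interpolation between $L^6$ and $L^\infty$; the rest is a direct application of mass conservation and the definition of $\mathcal Y$.
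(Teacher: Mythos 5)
Your proof is correct and essentially identical to the paper's: the cubic pointwise bound on $\Psi_1$, the reduction to $\tau^3\big\|\fe^{\frac i2\tau\partial_x^2}\Pi_{\le N}u^n\big\|_{L^{14}_x}^7$, the H\"older/Bernstein split yielding the factor $N\|w\|_{L^2_x}\|w\|_{L^6_x}^6$ with $\|u^n\|_{L^2_x}\le\|u^0\|_{L^2_x}$, and the summation via the definition of $\mathcal Y([0,T])$. The only step you assert without justification is $\big\|\fe^{\frac i2\tau\partial_x^2}\Pi_{\le N}u^n\big\|_{L^6_x}\lesssim\big\|\Pi_{\le N}u^n\big\|_{L^6_x}$ (the free flow is not $L^6$-bounded in general); as in the paper, this follows from the frequency localization via \eqref{est:opr-e} (or Lemma \ref{lem:multiplier}) together with the standing condition $\tau N^2\le 1$ implied by \eqref{tauN2-small}, which you should cite.
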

\begin{proof} 
Note that for any $y\in \R$,
$$
|\Psi_1(iy)|\lesssim |y|^3.
$$
Therefore, by Bernstein's inequality, \eqref{est:opr-e},  $\tau N^2\le 1$ and $\|u^n\|_{L^2}\lesssim \|u^0\|_{L^2}$, 
we have that 
\begin{align*}
\big\|R_n^6[\Pi_{\le N}u^n]\big\|_{L^2_x}
\lesssim &
\tau^3\big\|\fe^{\frac i2\tau\partial_x^2}\Pi_{\le N}u^n\big\|_{L^{14}_x}^7\\
\lesssim &
\tau^3N \big\|\fe^{\frac i2\tau\partial_x^2}\Pi_{\le N}u^n\big\|_{L^2_x}\big\|\fe^{\frac i2\tau\partial_x^2}\Pi_{\le N}u^n\big\|_{L^6_x}^6\\
\lesssim &
\tau^3N \big\|\Pi_{\le N}u^n\big\|_{L^6_x}^6.
\end{align*}
Hence, we have that 
\begin{align*}
\sum\limits_{n =0}^{L-1}\big\|R^6_n[\Pi_{\le N}u^n]\big\|_{L^2(\T)}
\lesssim &
\tau^2N \mathcal Y([0,T])^6.
\end{align*}
 thus give the proof of the lemma.
\end{proof}

Collecting the results in Lemmas \ref{lem:R3}--\ref{lem:R6}, we complete the proof of Lemma \ref{lem:Rn1}. 
 

\subsection{Estimates on $\Upsilon_n^2$}

First, we give the estimate on $r^n$ in the following lemma. 
\begin{lemma} \label{lem:rn}
Let  $\gamma>0, T>0, N\ge 1$   and $u_0\in H^\gamma$, then there exist $K_0>0$ and  $C=C(T,\|u_0\|_{H^\gamma})>0$, such that 
\begin{align*}
\big\|r^n\big\|_{l^\frac32_n L^\infty_t L^2_x([t_n,t_{n+1}])}
\le &
C\tau^\frac43 N^{2-\gamma+} \mathcal Y([0,T])^4.
\end{align*}
\end{lemma}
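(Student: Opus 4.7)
The plan is to recognize $r^n(t)$ as a second-order Taylor remainder and then sum in $n$ via discrete H\"older. Using \eqref{V-ltN}--\eqref{def:rn} and writing $U^n := \Pi_{\le N}u^n$, I first rewrite
$$r^n(t) = -i\lambda \int_{t_n}^t \big[G(s) - G(t_n)\big]\,ds + \frac{t-t_n}{\tau}\Upsilon_n^1,$$
where $G(s) := \fe^{-is\partial_x^2}\Pi_{\le N}[|W(s)|^2 W(s)]$ and $W(s) := \fe^{i(s-t_n)\partial_x^2}\big(U^n - i\lambda(s-t_n)\Pi_{\le N}[|U^n|^2 U^n]\big)$. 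The auxiliary function $\tilde w$ was built precisely so that the constant-in-$s$ Taylor term of the integrand cancels the subtracted term; hence $G(s)-G(t_n)$ vanishes at $s=t_n$, and the time integration gains an additional factor of $s-t_n$.

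Next I split $G(s) - G(t_n) = I_1(s) + I_2(s)$ into a propagator-change piece $I_1(s) := (\fe^{-i(s-t_n)\partial_x^2}-1)\fe^{-it_n\partial_x^2}\Pi_{\le N} F(t_n)$ and a nonlinear-change piece $I_2(s) := \fe^{-is\partial_x^2}\Pi_{\le N}[F(s)-F(t_n)]$, with $F(s):=|W(s)|^2 W(s)$. For $I_1$, the operator estimate \eqref{est:opr-e} combined with Bernstein and the Kato--Ponce inequality (Lemma \ref{lem:kato-Ponce}) gives $\|I_1(s)\|_{L^2_x} \lesssim (s-t_n) N^{2-\gamma+}\|J^{\gamma-}U^n\|_{L^6_x}^3$. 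For $I_2$, I expand $F(s)-F(t_n) = \int_{t_n}^s \partial_\sigma F\,d\sigma$ via the product rule together with $\partial_\sigma W = i\partial_x^2 W - i\lambda\fe^{i(\sigma-t_n)\partial_x^2}\Pi_{\le N}[|U^n|^2 U^n]$, and bound the resulting trilinear integrand via H\"older in $L^6_x$; the condition $\tau N^2\le 1$, via \eqref{est:opr-e} and Lemma \ref{lem:multiplier}, makes $\fe^{i(\sigma-t_n)\partial_x^2}\Pi_{\le N}$ bounded on $L^6_x$, so that each factor reduces to a power of $\|J^{\gamma-}U^n\|_{L^6_x}$. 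Combining the two pieces and absorbing the linearly interpolated $\Upsilon_n^1$-term via the $l^1_n\subset l^{3/2}_n$ bound of Lemma \ref{lem:Rn1} yields the per-step estimate
$$\|r^n\|_{L^\infty_t L^2_x([t_n,t_{n+1}])} \lesssim \tau^2\,N^{2-\gamma+}\,\|J^{\gamma-}U^n\|_{L^6_x}^4.$$

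Finally, summing over $n$ via the discrete H\"older inequality gives
$$\big\|r^n\big\|_{l^{3/2}_n L^\infty_t L^2_x}^{3/2} \lesssim \tau^3 N^{3(2-\gamma+)/2}\sum_n\|J^{\gamma-}U^n\|_{L^6_x}^6 = \tau^3 N^{3(2-\gamma+)/2}\|J^{\gamma-}U^n\|_{l^6_n L^6_x}^6,$$
and raising this to the $2/3$-power and using $\mathcal Y^6 = \tau\|J^{\gamma-}U^n\|_{l^6_n L^6_x}^6$ yields $C\tau^{4/3}N^{2-\gamma+}\mathcal Y^4$, as claimed. The main obstacle I expect is the per-step bookkeeping in $I_2$: the exponent $4$ on $\|J^{\gamma-}U^n\|_{L^6_x}$ is the critical power that lets $l^{3/2}_n$ convert cleanly into the $l^6_n$-norm defining $\mathcal Y$, and extracting precisely this power (rather than a three or five) requires absorbing stray contributions from the $\partial_x^2 W$ and $\Pi_{\le N}[|U^n|^2 U^n]$ parts of $\partial_\sigma W$ using the a priori boundedness of $\|J^{\gamma-}U^n\|_{L^6_x}$ coming from Lemmas \ref{lem:global-theory}--\ref{lem:est-vt}, together with the standing constraint $\tau N^2\le 1$.
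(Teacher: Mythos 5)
Your outline follows essentially the same route as the paper: you recognize $r^n$ as the time integral of the integrand minus its value frozen at $s=t_n$ plus the interpolated defect $\frac{t-t_n}{\tau}\Upsilon_n^1$, estimate the propagator change through \eqref{est:opr-e} together with Bernstein and Kato--Ponce, estimate the change of the nonlinearity using the $O(s-t_n)$ smallness of the correction $-i\lambda(s-t_n)\Pi_{\le N}\big[|\Pi_{\le N}u^n|^2\Pi_{\le N}u^n\big]$ and the $L^6$-boundedness of $\fe^{i(\sigma-t_n)\partial_x^2}\Pi_{\le N}$ under $\tau N^2\le 1$, treat the $\Upsilon_n^1$ piece by $l^1_n\hookrightarrow l^{3/2}_n$ and Lemma \ref{lem:Rn1}, and close by discrete H\"older in $n$. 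The paper's grouping into \eqref{rn-1} (propagator freezing on the cubic in $v^n$), \eqref{rn-2} (terms containing the correction), and \eqref{rn-3} is an equivalent rearrangement of your $I_1$/$I_2$ split (you put the inner propagator change into $I_2$ via the fundamental theorem of calculus in $\sigma$ rather than into $I_1$); the estimates invoked are the same.

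The one step you should not rely on is the claim that every per-step contribution can be upgraded to the quartic power $\|J^{\gamma-}U^n\|_{L^6_x}^4$ by ``absorbing stray contributions using the a priori boundedness of $\|J^{\gamma-}U^n\|_{L^6_x}$ coming from Lemmas \ref{lem:global-theory}--\ref{lem:est-vt}.'' Those lemmas control the exact solution $u(t_n)$, not the numerical iterate $u^n$; an $L^6$ bound on $\Pi_{\le N}u^n$ is precisely what the quantity $\mathcal Y([0,T])$ encodes and is only obtained a posteriori through the bootstrap of Section \ref{Convergence}, so invoking it inside this lemma would be circular. The genuinely cubic pieces (your $I_1$, and the $\partial_x^2 W$ part of $\partial_\sigma F$ inside $I_2$) should simply be kept cubic; summing them in $l^{3/2}_n$ by H\"older over the $L\sim T/\tau$ steps still yields $\tau^{4/3}N^{2-\gamma+}\mathcal Y([0,T])^3$ up to a $T$-dependent constant, while the quartic piece gives the stated $\tau^{4/3}N^{2-\gamma+}\mathcal Y([0,T])^4$ and the $\Upsilon_n^1$ piece contributes $\tau^2N^{4-\gamma+}\big(\mathcal Y([0,T])^3+\mathcal Y([0,T])^6\big)$. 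This mixture is in fact what the paper's own proof produces (its clean $\mathcal Y^4$ statement is a compression), and it is all that is needed downstream in Lemmas \ref{lem:Rn2} and \ref{lem:YT}; with that adjustment your argument is sound.
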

\begin{proof}
We denote 
$$
N_n(t)=-i\lambda (t-t_n)\fe^{-it_n\partial_x^2} \big[|\Pi_{\le N} u^n|^2\Pi_{\le N} u^n\big]
$$
for short. Then by the definitions \eqref{def:rn} and \eqref{V-ltN}, we have that 
\begin{subequations}\label{rn-123}
\begin{align}
r^n(t)=&-i \lambda  \int_{t_n}^t \fe^{-is\partial_x^2}\Pi_{\le N}
    \Big[\Big|\fe^{is\partial_x^2}\Pi_{\le N}v^n\Big|^2\fe^{is\partial_x^2}\Pi_{\le N}v^n\Big]\,ds\notag\\
    &\qquad +i \lambda  \int_{t_n}^t \fe^{-it_n\partial_x^2}\Pi_{\le N}
    \Big[\Big|\fe^{it_n\partial_x^2}\Pi_{\le N}v^n\Big|^2\fe^{it_n\partial_x^2}\Pi_{\le N}v^n\Big]\,ds\label{rn-1}\\
    &-2i \lambda  \int_{t_n}^t \fe^{-is\partial_x^2}\Pi_{\le N}
    \Big[\Big|\fe^{is\partial_x^2}\Pi_{\le N}v^n\Big|^2\fe^{is\partial_x^2}\Pi_{\le N}N_n(s)\Big]\,ds\notag\\
    &\qquad -i \lambda  \int_{t_n}^t \fe^{-is\partial_x^2}\Pi_{\le N}
    \Big[\fe^{-is\partial_x^2}\Pi_{\le N}\overline{v^n}\cdot\Big(\fe^{is\partial_x^2}\Pi_{\le N}N_n(s)\Big)^2\,ds\notag\\
    &\qquad -i \lambda  \int_{t_n}^t \fe^{-is\partial_x^2}\Pi_{\le N}
    \Big[\fe^{-is\partial_x^2}\Pi_{\le N}\overline{N_n(s)}\cdot\Big(\fe^{is\partial_x^2}\Pi_{\le N}\big(v^n+N_n(s)\big)\Big)^2\Big]\,ds\label{rn-2}\\
    &+\frac{t-t_n}{\tau}\Upsilon_n^1.\label{rn-3}
\end{align}
\end{subequations}

For \eqref{rn-1}, similar as the proof of Lemma \ref{lem:R3}, by Lemma \ref{lem:multiplier}, we have that 
\begin{align*}
\big\|\eqref{rn-1}\big\|_{L^\infty_t L^2_x([t_n,t_{n+1}])}
\lesssim \tau^2 N^{2-\gamma+}\big\|J^{\gamma-}\Pi_{\le N}u^n\big\|_{L^6_x}^3.
\end{align*}
Therefore,
\begin{align*}
\big\|\eqref{rn-1}\big\|_{l^\frac32_n L^\infty_t L^2_x([t_n,t_{n+1}])}
\lesssim \tau^\frac43 N^{2-\gamma+}\mathcal Y([0,T])^3.
\end{align*}

For \eqref{rn-2}, note that by Bernstein's inequality and  \eqref{est:opr-e},   it infers that 
\begin{align}\label{N-1}
\big\|\fe^{it\partial_x^2}\Pi_{\le N}N_n(t)\big\|_{L^\infty_t L^6_x([t_n,t_{n+1}])}
\lesssim &
\big\|J^\frac13\fe^{it\partial_x^2}\Pi_{\le N}N_n(t)\big\|_{L^\infty_t L^2_x([t_n,t_{n+1}])}\notag\\
\lesssim &
\tau  \big\|\Pi_{\le N}u^n\big\|_{L^2_x}\big\|J^\frac16\Pi_{\le N}u^n\big\|_{L^6_x}\big\|J^\frac12\Pi_{\le N}u^n\big\|_{L^6_x}\notag\\
\lesssim &
\tau \big\|J^\frac16\Pi_{\le N}u^n\big\|_{L^6_x}\big\|J^\frac12\Pi_{\le N}u^n\big\|_{L^6_x}.
\end{align}
This combining with $\|u^n\|_{L^2}\le \|u^0\|_{L^2}$, further yields that 
\begin{align}\label{N-2}
\big\|\fe^{it\partial_x^2}\Pi_{\le N}N_n(t)\big\|_{L^\infty_t L^6_x([t_n,t_{n+1}])}
\lesssim &
\tau N\big\|\Pi_{\le N}u^n\big\|_{L^6_x}\big\|\Pi_{\le N}u^n\big\|_{L^2_x} 
\lesssim \tau N\big\|\Pi_{\le N}u^n\big\|_{L^6_x}.
\end{align}

Then using \eqref{N-1}, \eqref{N-2},    H\"older and Bernstein's inequalities,  and $2-\gamma\ge 0$, we get that 
\begin{align*}
\big\|\eqref{rn-2}\big\|_{L^\infty_t L^2_x([t_n,t_{n+1}])}
\lesssim &
\tau \big\|\fe^{is\partial_x^2}\Pi_{\le N}N_n(t)\big\|_{L^\infty_t L^6_x}\\
&\quad \cdot
 \Big(\big\|\fe^{is\partial_x^2}\Pi_{\le N}v^n\big\|_{L^\infty_t L^6_x}+\big\|\fe^{is\partial_x^2}\Pi_{\le N}N_n(t)\big\|_{L^\infty_t L^6_x}\Big)^2\\
\lesssim &
\tau^2 \big\|J^\frac16\Pi_{\le N}u^n\big\|_{L^6_x}\big\|J^\frac12\Pi_{\le N}u^n\big\|_{L^6_x}
\Big(\big\|\Pi_{\le N}u^n\big\|_{L^6_x}+ \tau N\big\|\Pi_{\le N}u^n\big\|_{L^6_x}\Big)^2\\
\lesssim &
\tau^2N^{2-\gamma+} \big\|J^{\gamma-}\Pi_{\le N}u^n\big\|_{L^6_x}^4.
\end{align*}
Therefore,
\begin{align*}
\big\|\eqref{rn-2}\big\|_{l^\frac32_n L^\infty_t L^2_x([t_n,t_{n+1}])}
\lesssim \tau^\frac43 N^{2-\gamma+} \mathcal Y([0,T])^4.
\end{align*}

For \eqref{rn-3},  by Lemma \ref{lem:Rn1} and $l^1\hookrightarrow l^\frac32$, we have that 
\begin{align*}
\big\|\eqref{rn-3}\big\|_{l^\frac32_n L^\infty_t L^2_x([t_n,t_{n+1}])}
\le& \big\|\Upsilon_n^1\big\|_{l^\frac32_n H^\gamma_x}\\
\le& \big\|\Upsilon_n^1\big\|_{l^1_n H^\gamma_x}
\lesssim   \tau^2 N^{4-\gamma+}\Big(\mathcal Y([0,T])^3+\mathcal Y([0,T])^6\Big).
\end{align*}

Together with the three estimates on \eqref{rn-123}, we  finish the proof of the lemma. 
\end{proof}

\begin{lemma} \label{lem:Rn2}
Let $\gamma\in (0,2]$,  then there exists $C>0$ independent of $\tau, N$ and $u^n$, such that 
\begin{align*}
\sum\limits_{n =0}^{L-1}\big\|\Upsilon_n^2\big\|_{L^\infty_t L^2_x([t_n,t_{n+1}])}
\lesssim &
\tau^2N^{2-\gamma+} \big(\mathcal Y([0,T])^4+\mathcal Y([0,T])^{12}\big).
\end{align*}
\end{lemma}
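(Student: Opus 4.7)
The plan is to decompose $\Upsilon_n^2$ as given in \eqref{Rnt} into its five constituent trilinear integrals, which I classify by the multiplicity of $r^n$ they contain: two are linear in $r^n$ and quadratic in $\mathcal U_{\le N}$, two are quadratic in $r^n$ and linear in $\mathcal U_{\le N}$, and one is cubic in $r^n$. I will estimate each piece in $L^\infty_tL^2_x$ on $[t_n,t_{n+1}]$, and then sum over $n$.

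For each piece, I apply H\"older's inequality in the spatial variable, most naturally using the split $L^6_x\cdot L^6_x\cdot L^6_x\to L^2_x$, combined with Bernstein's inequality to upgrade $\|r^n\|_{L^2_x}$ to $\|r^n\|_{L^6_x}$ at the cost of a factor $N^{1/3}$ (recall every factor is frequency-localized to $|k|\lesssim N$). The time integral over $[t_n,t_{n+1}]$ contributes a factor $\tau$. Schematically, for a representative linear-in-$r^n$ piece this yields
$$\|\Upsilon_{n,\mathrm{lin}}^2\|_{L^\infty_tL^2_x}\lesssim \tau\, N^{1/3}\|r^n\|_{L^\infty_tL^2_x}\|\mathcal U_{\le N}\|_{L^\infty_tL^6_x}^2,$$
and analogous bounds hold for the quadratic and cubic-in-$r^n$ pieces with additional $r^n$ factors.

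I then sum over $n$ using discrete H\"older, matching the $l^{3/2}_n$-summability of $\|r^n\|_{L^\infty_tL^2_x}$ given by Lemma \ref{lem:rn}. For the linear pieces I pair $(l^{3/2}_n,l^3_n)$, coupling the bound $\|r^n\|_{l^{3/2}_n}\lesssim \tau^{4/3}N^{2-\gamma+}\mathcal Y^4$ with $\|\mathcal U_{\le N}\|_{l^6_n L^\infty_tL^6_x}^2$; for the cubic piece I use $\sum_n\|r^n\|^3\le\|r^n\|_{l^{3/2}_n}^3\lesssim\tau^4 N^{6-3\gamma+}\mathcal Y^{12}$. To handle the $\mathcal U_{\le N}$ norms, I write $\mathcal U_{\le N}(t)=\fe^{i(t-t_n)\partial_x^2}\Pi_{\le N}u^n+\fe^{it\partial_x^2}(N_n(t)+r^n(t))$ (with $N_n$ the cubic term used in the proof of Lemma \ref{lem:rn}); under the hypothesis $\tau N^2\le 1$ the corrections are subdominant, and the $L^\infty_tL^6_x$ norm of the main term on $[t_n,t_{n+1}]$ is controlled by $\|\Pi_{\le N}u^n\|_{L^6_x}\lesssim\|J^{\gamma-}\Pi_{\le N}u^n\|_{L^6_x}$ (valid for $\gamma>0$ since $J^{-(\gamma-)}$ is bounded on $L^6$). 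Taking $l^6_n$ then gives $\|\mathcal U_{\le N}\|_{l^6_n L^\infty_tL^6_x}\lesssim\tau^{-1/6}\mathcal Y$.

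The main obstacle is absorbing the extra Bernstein factors so that the final sum lands precisely at $\tau^2 N^{2-\gamma+}(\mathcal Y^4+\mathcal Y^{12})$. The key tool is the smallness hypothesis $\tau N^2\le 1$, which allows us to trade positive powers of $N$ for positive powers of $\tau$ (since $N^k\lesssim\tau^{-k/2}$). For instance, the cubic-in-$r^n$ estimate gives $\tau^5 N^{7-3\gamma+}\mathcal Y^{12}$, and $\tau^5 N^{7-3\gamma+}\le \tau^2 N^{2-\gamma+}$ follows from $\tau^3 N^{5-2\gamma+}\lesssim\tau^{1/2+\gamma}\le 1$. For the linear and quadratic pieces, a parallel--but more delicate--absorption of the $N^{1/3}$ Bernstein factors and of the intermediate powers of $\mathcal Y$ coming from the $\mathcal U_{\le N}$ factors yields bounds controlled by $\mathcal Y^4+\mathcal Y^{12}$ (any intermediate power $\mathcal Y^p$ with $4\le p\le 12$ satisfying $\mathcal Y^p\le \mathcal Y^4+\mathcal Y^{12}$), completing the proof.
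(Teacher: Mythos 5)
Your proposal is correct and follows essentially the same route as the paper's proof: decompose $\Upsilon_n^2$ from \eqref{Rnt} according to the number of $r^n$ factors, apply H\"older into $L^6_x$ factors with Bernstein (a factor $N^{1/3}$) to pass $r^n$ from $L^2_x$ to $L^6_x$, and sum in $n$ by discrete H\"older ($l^{3/2}_n$ against $l^3_n$ or $l^6_n$) using Lemma \ref{lem:rn} together with $\tau N^2\le 1$. The only (harmless) deviations are that you control $\mathcal U_{\le N}$ through per-interval $L^\infty_tL^6_x$ norms tied back to $u^n$ and hence $\mathcal Y$ via $l^6_n$ (producing intermediate powers such as $\mathcal Y^6$, which are indeed dominated by $\mathcal Y^4+\mathcal Y^{12}$), whereas the paper keeps the global $L^6_{tx}([0,T])$ norm of $\mathcal U_{\le N}$ with a $\tau^{2/3}$ H\"older in time, and your absorption of the leftover Bernstein $N^{1/3}$ on the pieces linear in $r^n$ is no more (and no less) precise than the paper's own bookkeeping, since there is no surplus power of $\tau$ to trade there via $\tau N^2\le 1$; this discrepancy is immaterial for how the lemma is used downstream.
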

\begin{proof}
By \eqref{Rnt}, H\"older and Bernstein's inequalities,
\begin{align*}
\big\|\Upsilon_n^2\big\|_{L^\infty_t L^2_x([t_n,t_{n+1}])}
\lesssim &
\int_{t_n}^{t_{n+1}} \big\|\fe^{is\partial_x^2} r^n\big\|_{L^6_x} \Big(\big\|\fe^{is\partial_x^2} r^n\big\|_{L^6_x}^2+\big\|\mathcal U_{\le N}\big\|_{L^6_x}^2\Big)\,ds\\
\lesssim &
\tau \big\|\fe^{it\partial_x^2} r^n\big\|_{L^\infty_t L^6_x([t_n,t_{n+1}])}^3+\tau^\frac23\big\|\fe^{it\partial_x^2} r^n\big\|_{L^\infty_t L^6_x([t_n,t_{n+1}])}\big\|\mathcal U_{\le N}\big\|_{L^6_{tx}([t_n,t_{n+1}])}^2.
\end{align*}
Therefore,
\begin{align*}
\sum\limits_{n =0}^{L-1}\big\|\Upsilon_n^2\big\|_{L^\infty_t L^2_x([t_n,t_{n+1}])}
\lesssim &
\tau \big\|\fe^{it\partial_x^2} r^n\big\|_{l_n^3 L^\infty_t L^6_x([t_n,t_{n+1}])}^3+\tau^\frac23\big\|\fe^{it\partial_x^2} r^n\big\|_{l^\frac32_n L^\infty_t L^6_x([t_n,t_{n+1}])}\big\|\mathcal U_{\le N}\big\|_{L^6_{tx}([0,T])}^2\\
\lesssim &
\tau^5 N^{6-3\gamma+}  \mathcal Y([0,T])^{12}
+\tau^2 N^{2-\gamma+} \mathcal Y([0,T])^4\cdot \big[\mathcal Y([0,T])+1\big]^2\\
\lesssim &
\tau^2N^{2-\gamma+}  \big(\mathcal Y([0,T])^4+\mathcal Y([0,T])^{12}\big).
\end{align*}
This gives the proof of the lemma.
\end{proof}

\begin{proof}[Proof of Proposition \ref{prop:R}]
By \eqref{def:R}, 
\begin{align*}
\big\|\Upsilon(t)\big\|_{L^\infty_tH^\gamma_x([0,T])}
\lesssim 
\sum\limits_{n =0}^{L-1}\big\|\Upsilon_n^1\big\|_{L^2_x}
+\sum\limits_{n =0}^{L-1}\big\|\Upsilon_n^2\big\|_{L^\infty_t L^2_x([t_n,t_{n+1}])}.
\end{align*}
Then it is followed from Lemmas \ref{lem:Rn1} and \ref{lem:Rn2}.
\end{proof}

\subsection{Estimates on $\Upsilon_3$ and $\Upsilon_4$}
First, we give the estimates on $R_3$. 
\begin{proposition} \label{prop:R34} Let $T>0$ and $L=\lfloor \frac T\tau\rfloor$, there exists  some $C>0$ independent of $\tau, N$ and $u^n$, such that 
$$
\big\|\Upsilon_3\big\|_{L^\infty_tL^2_x([0,T])}+\big\|\Upsilon_4\big\|_{L^\infty_tL^2_x([0,T])}
\le C\Big( \tau^2 \sum\limits_{n =0}^{L-1}\big\|\Pi_{>N}v^n\big\|_{L^2}+N^{-\min\{2\gamma,\alpha(\gamma)\}}+\tau N^{-\gamma+}\Big). 
$$
\end{proposition}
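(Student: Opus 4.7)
The plan is to treat $\Upsilon_3$ and $\Upsilon_4$ separately, handling $\Upsilon_3$ by direct computation from the defining formula \eqref{def:V-btN} (so no nonlinear structure is involved, only a Taylor-type expansion of $\fe^{-4\pi i\lambda M_0\tau}$), and reducing $\Upsilon_4$ to quantities already estimated in Proposition \ref{prop:R0} together with Lemma \ref{lem:est-vt-2}.

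For $\Upsilon_3$, the key observation is that \eqref{def:V-btN} gives the exact identity
\[
\mathcal V_{> N}(s)-\Pi_{> N}v^n=-4\pi i\lambda M_0(s-t_n)\Pi_{>N}v^n+\frac{s-t_n}{\tau}\Psi_2(-4\pi i\lambda M_0\tau)\Pi_{>N}v^n,
\]
so that integrating from $t_n$ to $s\in[t_n,t_{n+1}]$ yields an expression of the form $\tau^2\,(\text{bounded constant})\,\Pi_{>N}v^n$, using $|\Psi_2(x)|\le |x|^2$ to bound the second term by $\tau^2$. Adding the outer factor $\frac{t-t_n}{\tau}\Psi_2(-4\pi i\lambda M_0\tau)\Pi_{>N}v^n$ in the definition of $\Upsilon_n^3$ gives a contribution of the same size. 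Summing over $n=0,\ldots,L-1$ and taking the supremum in $t$ then yields
\[
\big\|\Upsilon_3\big\|_{L^\infty_tL^2_x([0,T])}\lesssim \tau^2\sum_{n=0}^{L-1}\big\|\Pi_{>N}v^n\big\|_{L^2},
\]
which already matches one of the asserted terms.

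For $\Upsilon_4$, the definition splits into the $\Pi_{>N}\Upsilon_0$ piece and two integrals of $\Pi_{>N}(v(s)-v(t_j))$. The first piece is controlled by Proposition \ref{prop:R0}, giving the factor $N^{-\min\{2\gamma,\alpha(\gamma)\}}+\tau N^{-\gamma+}$. For the second type of term I would apply Bernstein's inequality to insert the projector $\Pi_{>N}$ at a cost of $N^{-\gamma+}$, pulling out $J^{\gamma-}$, and then estimate trivially $\int_{t_j}^{t_{j+1}}\le \tau \|\cdot\|_{L^\infty_s L^2_x([t_j,t_{j+1}])}$. Summing over $j$ and invoking Lemma \ref{lem:est-vt-2} gives
\[
\sum_{j=0}^{L-1}\Big\|\int_{t_j}^{t_{j+1}}\Pi_{>N}\big(v(s)-v(t_j)\big)\,ds\Big\|_{L^2}\lesssim \tau N^{-\gamma+}\sum_{j=0}^{L-1}\big\|J^{\gamma-}(v(t_j+\cdot)-v(t_j))\big\|_{L^\infty_sL^2_x([0,\tau])}\lesssim \tau N^{-\gamma+},
\]
and the remaining incomplete-interval term is handled identically. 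Combining the two estimates gives the claim.

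The main obstacle I foresee is purely bookkeeping: one must make sure that after summing telescopically in $n$ the sum $\sum_n\|\Upsilon_n^3\|_{L^2}$ does not blow up a factor of $L\sim\tau^{-1}$. The proof above works because each individual summand is of size $\tau^2\|\Pi_{>N}v^n\|_{L^2}$ rather than, say, $\tau\|\Pi_{>N}v^n\|_{L^2}$; losing one power of $\tau$ on the high-frequency part is exactly what the ``freezing the phase by $\fe^{-4\pi i\lambda M_0\tau}$'' construction in \eqref{u-n-btN} is designed to accomplish, so getting the $\tau^2$ rather than $\tau$ is really where the high-frequency scheme earns its accuracy. Apart from that, all estimates are straightforward applications of Bernstein, Proposition \ref{prop:R0} and Lemma \ref{lem:est-vt-2}.
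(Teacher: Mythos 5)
Your proposal is correct and follows essentially the same route as the paper: $\Upsilon_3$ is bounded term-by-term using $|\Psi_2(iy)|\lesssim|y|^2$ and the explicit formula \eqref{def:V-btN}, giving $\|\Upsilon_n^3\|_{L^2}\lesssim\tau^2\|\Pi_{>N}v^n\|_{L^2}$ before summing, while $\Upsilon_4$ is split into the $\Pi_{>N}\Upsilon_0$ part (Proposition \ref{prop:R0}) and the increments $\Pi_{>N}(v(s)-v(t_j))$, handled with Bernstein's inequality at cost $N^{-\gamma+}$ together with Lemma \ref{lem:est-vt-2}. No gaps to report.
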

\begin{proof}
For $\Upsilon_3$, note that for any $y\in \R$,
$$
|\Psi_2(iy)|\lesssim |y|^2.
$$
then by \eqref{def:V-btN}, we have that for any $s\in [t_n, t_{n+1}]$, 
$$
\big\|\mathcal V_{>N}(s)-\Pi_{>N}v^n\big\|\lesssim \tau  \big\|\Pi_{>N}v^n\big\|_{L^2}.
$$
Hence, 
\begin{align*}
\big\|\Upsilon^3_n\big\|_{L^\infty_tL^2_x([0,T])}
\lesssim 
\tau \big\|\mathcal V_{>N}(s)-\Pi_{>N}v^n\big\|_{L^\infty_tL^2_x([0,T])}+\tau^2 \big\|\Pi_{>N}v^n\big\|_{L^2}
\lesssim 
\tau^2 \big\|\Pi_{>N}v^n\big\|_{L^2}.
\end{align*}
Therefore,  
$$
\big\|\Upsilon_3\big\|_{L^\infty_tL^2_x([0,T])}
\lesssim \sum\limits_{n =0}^{L-1}\big\|\Upsilon^3_n  \big\|_{L^\infty_tL^2_x([0,T])}
\lesssim \tau^2 \sum\limits_{n =0}^{L-1}\big\|\Pi_{>N}v^n\big\|_{L^2}. 
$$

For $\Upsilon_4$, by Lemma \ref{lem:est-vt-2}, Proposition \ref{prop:R0}, and Bernstein's inequality, we have that 
\begin{align*}
\big\|\Upsilon_4\big\|_{L^\infty_tL^2_x([0,T])}
\lesssim &
\tau N^{-\gamma+}\sum\limits_{j=0}^{n-1}\big\| J^{\gamma-}\big(v(s)-v(t_j)\big)\big\|_{L^\infty_tL^2_x([t_j,t_{j+1}])}+\big\| \Upsilon_0(t)\big\|_{L^\infty_tL^2_x([0,T])}\\
\lesssim 
& N^{-\min\{2\gamma,\alpha(\gamma)\}}+\tau N^{-\gamma+}.
\end{align*} 
This combining the estimate on $\Upsilon_3$ gives the proof of the lemma. 
\end{proof}

\section{Convergence estimates in short time}\label{Convergence}

\subsection{Estimates on $\mathcal Y([0,T])$}
\begin{lemma}\label{lem:YT} Let $\gamma>0, T\in (0,T_0], N\ge 1$ and $u_0\in H^\gamma$, then there exist  $K_0>0$ and $C=C(T,\|u_0\|_{H^\gamma})>0$, such that 
\begin{align*}
\mathcal Y([0,T])\le C\delta_0\big(1+\mathcal X([0,T])+\mathcal X([0,T])^3\big)
+C \big(\mathcal Y([0,T])^3+\mathcal Y([0,T])^{12}\big).
\end{align*}
\end{lemma}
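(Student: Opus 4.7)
The plan is to port the proof of Lemma~\ref{lem:est-vt} from the exact flow $u$ to the approximate flow $\mathcal U_{\le N}$: compare the discrete norm $\mathcal Y([0,T])$ with the continuous Strichartz norm $\|J^{\gamma-}\mathcal U_{\le N}\|_{L^6_{tx}([0,T])}$, estimate the latter via Lemma~\ref{lem:JU}, and control the time-increment error by the integral equation~\eqref{U-eqs} together with the remainder bound from Proposition~\ref{prop:R}. Since $\Pi_{\le N}u^n=\mathcal U_{\le N}(t_n)$, for each $n$ we have $\tau^{1/6}\|J^{\gamma-}\mathcal U_{\le N}(t_n)\|_{L^6_x}=\|J^{\gamma-}\mathcal U_{\le N}(t_n)\|_{L^6_{tx}([t_n,t_{n+1}])}$; inserting $\mathcal U_{\le N}(t)$ and summing in $l^6_n$ gives
$$\mathcal Y([0,T])\le \|J^{\gamma-}\mathcal U_{\le N}\|_{L^6_{tx}([0,T])}+\mathcal E_{\mathrm{disc}},$$
with $\mathcal E_{\mathrm{disc}}$ the $l^6_n L^6_{tx}([t_n,t_{n+1}])$-size of the time jumps $\mathcal U_{\le N}(t)-\mathcal U_{\le N}(t_n)$. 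The first summand is controlled directly by Lemma~\ref{lem:JU}, delivering the baseline $C\delta_0(1+\mathcal X([0,T]))$.

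For the discretization error $\mathcal E_{\mathrm{disc}}$, I would expand $\mathcal U_{\le N}(t)-\mathcal U_{\le N}(t_n)$ using~\eqref{U-eqs} into three pieces: a linear Schr\"odinger increment $(\fe^{i(t-t_n)\partial_x^2}-I)\mathcal U_{\le N}(t_n)$, a cubic Duhamel increment on $[t_n,t]$, and the remainder jump coming from $(t-t_n)\tau^{-1}\Upsilon_n^1+\Upsilon_n^2(t)$. The linear piece acquires a factor $(t-t_n)N^2$ via Lemma~\ref{lem:multiplier} applied to the multiplier $\fe^{-i(t-t_n)k^2}-1$ restricted to $|k|\le N$, and since \eqref{tauN2-small} enforces $\tau N^2\le \delta_0^{1/2}$, this contribution is absorbable into the left-hand side. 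The cubic increment is estimated by Kato--Ponce combined with the Bernstein bound $\|\mathcal U_{\le N}\|_{L^\infty_x}\lesssim N^{1/6}\|\mathcal U_{\le N}\|_{L^6_x}$, reducing to Strichartz-type norms handled by Lemma~\ref{lem:JU}: the resulting bound $\|J^{\gamma-}\mathcal U_{\le N}\|_{L^4_{tx}}^3\lesssim \delta_0^3(1+\mathcal X)^3$ accounts for the $\mathcal X^3$ term in the conclusion after $\delta_0^3\le \delta_0$.

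For the remainder jumps, the contributions of $\Upsilon_n^1$ and $\Upsilon_n^2$ are directly the objects estimated in Lemmas~\ref{lem:Rn1} and~\ref{lem:Rn2}, whose outputs are aggregated in Proposition~\ref{prop:R} into $C\tau^2 N^{4-\gamma+}(\mathcal Y^3+\mathcal Y^{12})$. The assumption $\tau N^2\le 1$ then upgrades the prefactor to $\tau^2 N^{4-\gamma+}\lesssim N^{-\gamma+}\le 1$, so this collapses to the clean $C(\mathcal Y^3+\mathcal Y^{12})$ appearing on the right-hand side of the statement.

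The main obstacle is the careful bookkeeping of fractional $\tau$ and $N$ powers inside $\mathcal E_{\mathrm{disc}}$: the $N^{1/6}$ factor from Bernstein in the cubic discretization increment, combined with the intrinsic $\tau^{1/6}$ in $\mathcal Y$ and the short time-length $\tau$ of each subinterval, must recombine so that only the single smallness $\tau N^2\le \delta_0^{1/2}$ from \eqref{tauN2-small} is consumed, with no residual power of $N$. A secondary technicality is that Lemma~\ref{lem:Stri} is stated on $[0,1]$, so the Strichartz estimates on $[0,T\le T_0]$ (implicitly used inside Lemma~\ref{lem:JU}) are obtained by a standard subinterval iteration with constants depending only on $T_0$.
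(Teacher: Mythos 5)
Your proposal is correct and follows essentially the same route as the paper's proof: compare $\mathcal Y([0,T])$ with $\big\|J^{\gamma-}\mathcal U_{\le N}\big\|_{L^6_{tx}([0,T])}$ through the local integral equation for $\mathcal U_{\le N}$ on each $[t_n,t_{n+1}]$, control the free and cubic pieces by Lemma \ref{lem:JU} together with Strichartz, Kato--Ponce and Bernstein (yielding the $\delta_0(1+\mathcal X+\mathcal X^3)$ part), and control the remainder jumps by Lemmas \ref{lem:Rn1} and \ref{lem:Rn2}. The only bookkeeping you glossed over is that converting the $L^2_x$ bounds on $\Upsilon_n^1,\Upsilon_n^2$ into the $J^{\gamma-}L^6_{tx}$ setting costs an additional factor $\tau^{\frac16}N^{\frac13+\gamma-}$ (exactly as in the paper), but since $\tau^{\frac16}N^{\frac13+\gamma-}\cdot\tau^2N^{4-\gamma+}\lesssim(\tau N^2)^{\frac{13}{6}}\le 1$, the remainder still collapses to $C\big(\mathcal Y([0,T])^3+\mathcal Y([0,T])^{12}\big)$ as you claimed.
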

\begin{proof}
By \eqref{U-eqs}, we have that 
\begin{align*}
\Pi_{\le N}u^n
=&\fe^{i(t_n-t)\partial_x^2}\mathcal U_{\le N}(t)
 +i \lambda  \int_{t_n}^{t} \fe^{i(t_n-s)\partial_x^2}\Pi_{\le N}
    \Big[\big|\mathcal U_{\le N}(s)\big|^2\mathcal U_{\le N}(s)\Big]\,ds\\
    &\quad +\fe^{it_n\partial_x^2}\Big(\frac{t-t_n}{\tau} \Upsilon_n^1+\Upsilon_n^2(t)\Big).
\end{align*}
Therefore, by \eqref{est:opr-e}, Lemmas \ref{lem:Stri} and \ref{lem:kato-Ponce}, and Bernstein's inequality,  we have that 
\begin{align*}
\tau^\frac16\big\|J^{\gamma-}\Pi_{\le N}u^n\big\|_{L^6_{x}}
=&\big\|J^{\gamma-}\Pi_{\le N}u^n\big\|_{L^6_{tx}([t_n,t_{n+1}])}\\
\le 
&\big\|J^{\gamma-}\fe^{i(t_n-t)\partial_x^2}\mathcal U_{\le N}(t)\big\|_{L^6_{tx}([t_n,t_{n+1}])}\\
    &\quad
 +\Big\| \int_{t_n}^{t} \fe^{i(t_n-s)\partial_x^2}\Pi_{\le N}J^{\gamma-}
    \Big[\big|\mathcal U_{\le N}(s)\big|^2\mathcal U_{\le N}(s)\Big]\,ds\Big\|_{L^6_{tx}([t_n,t_{n+1}])}\\
    &\quad +\Big\|\fe^{it_n\partial_x^2}J^{\gamma-}\Big(\frac{t-t_n}{\tau} \Upsilon_n^1+\Upsilon_n^2(t)\Big)\Big\|_{L^6_{tx}([t_n,t_{n+1}])}\\
\lesssim &
\big\|J^{\gamma-}\mathcal U_{\le N}(t)\big\|_{L^6_{tx}([t_n,t_{n+1}])}
 +\big\|J^{\gamma}\mathcal U_{\le N}\big\|_{L^4_{tx}([t_n,t_{n+1}])}^3\\
&\quad+  \tau^\frac16 N^{\frac13+\gamma-} \Big(\big\|\Upsilon_n^1\big\|_{L^2_x}
+\big\|\Upsilon_n^2\big\|_{L^\infty_t L^2_x([t_n,t_{n+1}])}\Big).
\end{align*}
Therefore,  by  Lemmas  \ref{lem:JU}, \ref{lem:Rn1} and \ref{lem:Rn2}, we have that  
\begin{align*}
\tau^\frac16\big\|J^{\gamma-}\Pi_{\le N}u^n\big\|_{l^6_nL^6_{x}}
\lesssim &
\big\|J^{\gamma-}\mathcal U_{\le N}(t)\big\|_{L^6_{tx}([0,T])}
 +\big\|J^{\gamma}\mathcal U_{\le N}\big\|_{L^4_{tx}([0,T])}^3\\
&\quad+  \tau^\frac16 N^{\frac13+\gamma-} \sum\limits_{n =0}^{L-1}\Big(\big\|\Upsilon_n^1\big\|_{L^2_x}
+\big\|\Upsilon_n^2\big\|_{L^\infty_t L^2_x([t_n,t_{n+1}])}\Big)\\
\lesssim &
\delta_0\big(1+\mathcal X([0,T])+\delta_0^2\mathcal X([0,T])^3\big)
+\big(\mathcal Y([0,T])^3+\mathcal Y([0,T])^{12}\big).
\end{align*}
This finishes the proof of the lemma.
\end{proof}

\subsection{Estimates on $\mathcal X([0,T])$}
\begin{lemma} \label{lem:XT}
Let $\gamma>0, T\in (0,T_0], N\ge 1$ and $u_0\in H^\gamma$. Denote that 
$$
X_0\triangleq A(N,\tau)^{-1}\big\|\mathcal E_{\le N}(0)\big\|_{L^2},
$$
then there exists  some $C=C(T,\|u_0\|_{H^\gamma})>0$, such that 
$$
\mathcal X([0,T])\le C\Big[1+X_0+
  A(N,\tau)^2\mathcal X([0,T])^3+\Big(\mathcal Y([0,T])^3+\mathcal Y([0,T])^{12}\Big)\Big].
$$
Moreover, 
\begin{align}\label{Energy}
\big\|\fe^{-iT\partial_x^2}&\mathcal E_{\le N}(T)-\big(\mathcal E_{\le N}(0)-\Pi_{\le N}\Upsilon_0(T)\big)\big\|_{L^2_x}\notag\\
\le & C A(N,\tau)\Big[
  \mathcal X([0,T])\Big(A(N,\tau)^2\mathcal X([0,T])^2+\delta_0^2 \Big)+\Big(\mathcal Y([0,T])^3+\mathcal Y([0,T])^{12}\Big)\Big].
\end{align}
\end{lemma}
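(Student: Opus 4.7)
The plan is to apply the inhomogeneous Strichartz estimates of Lemma~\ref{lem:Stri} to the Duhamel identity \eqref{def-E} satisfied by $\mathcal E_{\le N}$, thereby controlling the three spacetime norms that make up $\mathcal X([0,T])$. The nonlinear difference $|\mathcal U_{\le N}|^2\mathcal U_{\le N}-|\Pi_{\le N}u|^2\Pi_{\le N}u$ factors with at least one copy of $\mathcal E_{\le N}$, and the $L^4_{tx}$-smallness of $\mathcal U_{\le N}$ and $\Pi_{\le N}u$ supplied by Lemma~\ref{lem:JU} and \eqref{solution-est-small} will allow the linear-in-$\mathcal E_{\le N}$ contribution to be absorbed, leaving the advertised cubic remainder in $\mathcal X$.

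Decomposing $\mathcal U_{\le N}=\Pi_{\le N}u+\mathcal E_{\le N}$, I expand
$$
|\mathcal U_{\le N}|^2\mathcal U_{\le N}-|\Pi_{\le N}u|^2\Pi_{\le N}u
=\sum_{\substack{a+b+c+d=3\\ a+b\ge 1}} c_{abcd}\,\mathcal E_{\le N}^{\,a}\,\overline{\mathcal E_{\le N}}^{\,b}\,(\Pi_{\le N}u)^c\,(\overline{\Pi_{\le N}u})^d.
$$
Applying Lemma~\ref{lem:Stri} with the forcing in $L^{4/3}_{tx}$ and H\"older with exponents $(4,4,4)$, each summand is bounded by
$$
\|\mathcal E_{\le N}\|_{L^4_{tx}}^{a+b}\|\Pi_{\le N}u\|_{L^4_{tx}}^{c+d}
\lesssim \bigl(A(N,\tau)\mathcal X([0,T])\bigr)^{a+b}\delta_0^{c+d}.
$$
Summing over $a+b\in\{1,2,3\}$ and dividing by $A(N,\tau)$ yields $\delta_0^2\mathcal X+\delta_0 A(N,\tau)\mathcal X^2+A(N,\tau)^2\mathcal X^3$. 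For $\delta_0$ sufficiently small the linear term is absorbed on the left, and the cross term is dominated via AM--GM by $\delta_0\mathcal X+\delta_0 A(N,\tau)^2\mathcal X^3$, leaving the cubic contribution. The inhomogeneous Strichartz estimate with the $J^{-\epsilon}$ endpoint in Lemma~\ref{lem:Stri}(2) handles the $\|J^{0-}\mathcal E_{\le N}\|_{L^6_{tx}}$-component of $\mathcal X$ from the same $L^{4/3}_{tx}$ bound, so all three norms defining $\mathcal X$ are controlled simultaneously.

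For the forcing, the homogeneous term contributes $\|\mathcal E_{\le N}(0)\|_{L^2}=A(N,\tau)X_0$. Proposition~\ref{prop:R0} gives
$$
\|\Pi_{\le N}\Upsilon_0\|_{L^\infty_tL^2_x}\le\|\Upsilon_0\|_{L^\infty_tH^\gamma_x}\lesssim N^{-\min\{2\gamma,\alpha(\gamma)\}}+\tau N^{-\gamma+}\le A(N,\tau),
$$
producing the constant $1$ after division. Proposition~\ref{prop:R} together with $H^\gamma\hookrightarrow L^2$ yields
$$
\|\Upsilon\|_{L^\infty_tL^2_x}\lesssim \tau^2 N^{4-\gamma+}\bigl(\mathcal Y^3+\mathcal Y^{12}\bigr)\lesssim A(N,\tau)\bigl(\mathcal Y^3+\mathcal Y^{12}\bigr),
$$
where the $\epsilon$-slack is absorbed by choosing the $+$ in $A(N,\tau)$ larger than the $+$ appearing in Proposition~\ref{prop:R}. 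Combining these contributions produces the first claimed inequality.

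The energy estimate is obtained by evaluating \eqref{def-E} at $t=T$ and multiplying by $\fe^{-iT\partial_x^2}$:
$$
\fe^{-iT\partial_x^2}\mathcal E_{\le N}(T)-\bigl(\mathcal E_{\le N}(0)-\Pi_{\le N}\Upsilon_0(T)\bigr)
=\Upsilon(T)-i\lambda\int_0^T\fe^{-is\partial_x^2}\Pi_{\le N}\bigl[|\mathcal U_{\le N}|^2\mathcal U_{\le N}-|\Pi_{\le N}u|^2\Pi_{\le N}u\bigr]\,ds.
$$
Bounding the integral in $L^2_x$ via the Strichartz estimate $\|\int_0^T\fe^{-is\partial_x^2}F\,ds\|_{L^2_x}\lesssim\|F\|_{L^{4/3}_{tx}}$ and reusing the trilinear expansion above gives an upper bound of the form $A(N,\tau)\cdot[\delta_0^2\mathcal X+A(N,\tau)^2\mathcal X^3]$, while $\|\Upsilon(T)\|_{L^2}$ is already $A(N,\tau)(\mathcal Y^3+\mathcal Y^{12})$ by the above. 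Collecting these yields \eqref{Energy}. The principal obstacle is the bookkeeping: keeping every $\epsilon$-loss encoded by the $\pm$ conventions strictly below the margin built into $A(N,\tau)$, and arranging the $\delta_0$-absorption of the linear-in-$\mathcal X$ term so that the implicit constant does not swallow the cubic structure needed for the bootstrap in the next section.
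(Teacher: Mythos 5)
Your proposal is correct and follows essentially the same route as the paper: Strichartz applied to the Duhamel identity \eqref{def-E}, the cubic difference estimated in $L^{4/3}_{tx}$ via H\"older using the $L^4_{tx}$-smallness from \eqref{solution-est-small}, the defect terms handled by Propositions \ref{prop:R0} and \ref{prop:R}, absorption of the linear-in-$\mathcal X$ term for $\delta_0$ small, and the energy bound \eqref{Energy} obtained by evaluating the same identity at $t=T$. The only cosmetic difference is that you expand the nonlinear difference into monomials and dispose of the quadratic cross term by AM--GM, where the paper bounds the difference directly by $\|\mathcal E_{\le N}\|_{L^4_{tx}}\big(\|\mathcal E_{\le N}\|_{L^4_{tx}}^2+\|u\|_{L^4_{tx}}^2\big)$.
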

\begin{proof}
By \eqref{def-E},  \eqref{solution-est-small}, Lemma \ref{lem:Stri},  Propositions \ref{prop:R0} and \ref{prop:R},  for any $T\in (0, T_0]$, we have that 
\begin{align*}
&\big\|\mathcal E_{\le N}\big\|_{L^\infty_tL^2_x\cap L^4_{tx}([0,T])}+\big\|J^{0-}\mathcal E_{\le N}\big\|_{L^6_{tx}([0,T])}\\
\lesssim 
&\big\| \mathcal E_{\le N}(0)\big\|_{L^2}+ \Big\|\Pi_{\le N}
    \Big[\big|\mathcal U_{\le N}(s)\big|^2\mathcal U_{\le N}(s)-\big|\Pi_{\le N}u(s)\big|^2\Pi_{\le N}u(s)\Big]\Big\|_{L^\frac43_{tx}([0,T])}\notag\\
    &+\big\|\Upsilon(t)\big\|_{L^\infty_tL^2_x([0,T])}+\big\|\Upsilon_0(t)\big\|_{L^\infty_tL^2_x([0,T])}\\
\lesssim &
\big\| \mathcal E_{\le N}(0)\big\|_{L^2}+ \big\|\mathcal E_{\le N}\big\|_{L^4_{tx}[0,T]}\Big(\big\|\mathcal E_{\le N}\big\|_{L^4_{tx}[0,T]}^2+\|u\|_{L^4_{tx}[0,T]}^2 \Big)\\
&\qquad+C  \tau^2 N^{4-\gamma+}\Big(\mathcal Y([0,T])^3+\mathcal Y([0,T])^{12}\Big)+C\Big(N^{-\min\{2\gamma,\alpha(\gamma)\}}+\tau N^{-\gamma+}\Big)\\
\lesssim &
 A(N,\tau)\Big[1+X_0+
  \mathcal X([0,T])\Big(A(N,\tau)^2\mathcal X([0,T])^2+\delta_0^2 \Big)+\Big(\mathcal Y([0,T])^3+\mathcal Y([0,T])^{12}\Big)\Big].
\end{align*} 
Therefore, we obtain that 
$$
\mathcal X([0,T])\lesssim 1+X_0+
  \mathcal X([0,T])\Big(A(N,\tau)^2\mathcal X([0,T])^2+\delta_0^2 \Big)+\Big(\mathcal Y([0,T])^3+\mathcal Y([0,T])^{12}\Big).
$$
Choosing $\delta_0$ suitably small, then we obtain that 
$$
\mathcal X([0,T])\le C\Big[1+X_0+
  A(N,\tau)^2\mathcal X([0,T])^3+\Big(\mathcal Y([0,T])^3+\mathcal Y([0,T])^{12}\Big)\Big].
$$
Moreover, note that  
\begin{align*}
\big\|\fe^{-iT\partial_x^2}&\mathcal E_{\le N}(T)-\big(\mathcal E_{\le N}(0)-\Pi_{\le N}\Upsilon_0(t)\big)\big\|_{L^2_x}\notag\\
\le &  \Big\|\Pi_{\le N}
    \Big[\big|\mathcal U_{\le N}(s)\big|^2\mathcal U_{\le N}(s)-\big|\Pi_{\le N}u(s)\big|^2\Pi_{\le N}u(s)\Big]\Big\|_{L^\frac43_{tx}([0,T])}+\big\|\Upsilon(t)\big\|_{L^\infty_tL^2_x([0,T])},
\end{align*}
then \eqref{Energy} is followed by the same estimates above. 
\end{proof}

Combining  the estimates in Lemmas  \ref{lem:YT} and \ref{lem:XT}, we have that for any $T\in (0,T_0]$, there exists $C_1,C_2>0$, such that
\begin{equation}\label{est:XY-T}
\begin{split}
\mathcal Y([0,T])\le  &C_1\Big[\delta_0\big(1+\mathcal X([0,T])+\delta_0^2\mathcal X([0,T])^3\big)
+\big(\mathcal Y([0,T])^3+\mathcal Y([0,T])^{12}\big)\Big];\\
\mathcal X([0,T])\le & C_2 \Big[1+X_0+
  A(N,\tau)^2\mathcal X([0,T])^3+\Big(\mathcal Y([0,T])^3+\mathcal Y([0,T])^{12}\Big)\Big].
  \end{split}
\end{equation}
Since $X_0=0, A(N,\tau)\le N^{-\gamma}$,  by bootstrap and choosing $\tau_0$ suitably small, we obtain that 
for any $\tau \in (0,\tau_0]$ and any $T\in (0,T_0]$, 
\begin{align}\label{XY-Tj}
\mathcal X([0,T])\le 2C_2;\mbox{ and } \mathcal Y([0,T])\le 4C_1C_2 \delta_0. 
\end{align}

\section{Iteration and proof of Proposition \ref{prop:main}}\label{proof-of-Theorem}
We split the convergence estimates into two parts:  the low-frequency and the high-frequency components. 

\subsection{Convergence estimate: low-frequency component}
Now we extend the estimates \eqref{XY-T} to any fixed $T^*<\infty$. From Lemma \ref{lem:global-theory}, we have that 
there exists some constant $C(T,\|u_0\|_{H^\gamma})> 0$, such that 
\begin{align*}
\big\|J^\gamma u\big\|_{L^4_{tx}([0,T^*]\times \T)}
+\big\|J^{\gamma-} u\big\|_{L^6_{tx}([0,T^*]\times \T)}
\le C(T^*,\|u_0\|_{H^\gamma}).
\end{align*}
Fixing $\delta_0>0$, then there exists $K\sim \frac{T^*}{\delta_0}$ such that 
$$
[0,T^*]=[0,T_1]\cup  [T_1,T_2]\cup\cdots  \cup [T_{K-1},T_K],
$$
with 
\begin{align}\label{solution-est-small-j}
\big\|J^\gamma u\big\|_{L^4_{tx}([T_k,T_{k+1}])}
+\big\|J^{\gamma-} u\big\|_{L^6_{tx}([T_k,T_{k+1}])}
\le \delta_0, \quad \mbox{for each}\quad k=0,\cdots, K-1.
\end{align}

Denote that 
$$
X_k\triangleq A(N,\tau)^{-1}\big\|\mathcal E_{\le N}(T_k)\big\|_{L^2},
$$
then replacing $0$ and $T$ by $T_k$ and $T_{k+1}$ in \eqref{est:XY-T}, we have that for any $T\in [T_k,T_{k+1}]$, 
\begin{equation}\label{est:XY-Tk}
\begin{split}
\mathcal Y([T_k,T])\le  &C_1\Big[\delta_0\big(1+\mathcal X([T_k,T])+\delta_0^2\mathcal X[T_k,T]^3\big)
+\big(\mathcal Y([T_k,T])^3+\mathcal Y([T_k,T])^{12}\big)\Big];\\
\mathcal X([T_k,T])\le & C_2 \Big[1+X_k+
  A(N,\tau)^2\mathcal X([T_k,T])^3+\Big(\mathcal Y([T_k,T])^3+\mathcal Y([T_k,T])^{12}\Big)\Big].
  \end{split}
\end{equation}
Moreover, 
\begin{align}
\big\|\fe^{-iT_{k+1}\partial_x^2}\mathcal E_{\le N}(T_{k+1})&-\fe^{-iT_k\partial_x^2}\big(\mathcal E_{\le N}(T_k)+\Upsilon_0(T_{k+1})\big)\big\|_{L^2}\notag\\
\le  C A(N,\tau)\Big[&
  \mathcal X([T_k,T_{k+1}])\Big(A(N,\tau)^2\mathcal X([T_k,T_{k+1}])^2+\delta_0^2 \Big)\notag\\
  &+\Big(\mathcal Y([T_k,T_{k+1}])^3+\mathcal Y([T_k,T_{k+1}])^{12}\Big)\Big].\label{Energy-k}
\end{align}
%

Suppose that  there exists some $K_0\in \Z: 0\le K_0\le K-1$, such that for any $k=0,1,\cdots, K_0$,
\begin{align}\label{ASS-k}
X_k\le C_0,
\end{align}
where $C_0$ can be decided by the inequality from Proposition \ref{prop:R0}: 
$$\big\|\Upsilon_0(t)\big\|_{L^\infty_tL^2_x([0,T^*])}
\le \frac12 C_0 A(N,\tau).
$$
Then by \eqref{est:XY-Tk} and the bootstrap,  and choosing $\tau_0$ suitably small, we obtain that  
for any $\tau \in (0,\tau_0]$ and any $T\in (T_k,T_{k+1}]$, 
\begin{align}\label{XY-T}
\mathcal X([T_k,T_{k+1}])\le 3C_2;\mbox{ and } \mathcal Y([T_k,T_{k+1}])\le 4C_1C_2 \delta_0. 
\end{align}
Therefore, by \eqref{Energy-k},  Proposition \ref{prop:R0}   and \eqref{XY-T}, we obtain that 
\begin{align*}
\big\|\mathcal E_{\le N}(T_{K_0+1})\big\|_{L^2}
=&
\big\|\fe^{-iT_{K_0+1}\partial_x^2}\mathcal E_{\le N}(T_{K_0+1})\big\|_{L^2}\\
\le &\big\|\Upsilon_0(T_{K_0}\big)\big\|_{L^2_x}+  C A(N,\tau)\sum\limits_{k=0}^{K_0}\Big[
  \mathcal X([T_k,T_{k+1}])\Big(A(N,\tau)^2\mathcal X([T_k,T_{k+1}])^2+\delta_0^2 \Big)\\
  &+\Big(\mathcal Y([T_k,T_{k+1}])^3+\mathcal Y([T_k,T_{k+1}])^{12}\Big)\Big]\\
   \le  &
     A(N,\tau) \Big[\frac12 C_0+ C_3K_0\big(A(N,\tau)^2+\delta_0^2+\delta_0^3+\delta_0^{12}\big)\Big]. 
\end{align*}
where $C_3=C_3(C_1,C_2)>0$.  
Since $K\delta_0\sim T^*$, then by \eqref{tauN2-small}, it further infers that 
\begin{align}
\big\|\mathcal E_{\le N}(T_{K_0+1})\big\|_{L^2}
   \le  &
     A(N,\tau) \Big[\frac12 C_0+ C_4\delta_0\Big],
\end{align}
where $C_4=C_4(T^*,C_3)>0$. Choosing $\delta_0$ suitably small, then we get that 
\begin{align*}
\big\|\mathcal E_{\le N}(T_{K_0+1})\big\|_{L^2}
\le C_0
     A(N,\tau). 
\end{align*}
Thus, \eqref{ASS-k} holds for $k=0,1,\cdots, K_0+1$. Then by iteration, we obtain that  \eqref{ASS-k} holds for $k=0,1,\cdots, K$. 
Hence, \eqref{XY-T} holds for $k=0,1,\cdots, K-1$. 
In particular, this implies that 
\begin{align}\label{Low-fre-conv}
\max_{1\le n\le L}  \|\Pi_{\le N}u(t_n)-\Pi_{\le N}u^{n}\|_{L^2}
  \le C\Big( N^{-\min\{2\gamma,\alpha(\gamma)\}}+\tau N^{-\gamma+}+\tau^2 N^{4-\gamma}\Big).
\end{align}

\subsection{Convergence estimate: high-frequency component}
 
 First, since $\|\Pi_{>N}v^n\|_{L^2}\le \|v^0\|_{L^2}$, by Proposition \ref{prop:R34}, we have that
 $$
\big\|\Upsilon_3\big\|_{L^\infty_tL^2_x([0,T])}+\big\|\Upsilon_4\big\|_{L^\infty_tL^2_x([0,T])}
\le C\Big( \tau +N^{-\min\{2\gamma,\alpha(\gamma)\}}\Big). 
$$
Hence, from \eqref{E-high}, we obtain that for any $T>0$, 
$$
\big\|\mathcal E_{> N}\big\|_{L^\infty_tL^2_x([0,T])}\le C\Big(\tau+N^{-\min\{2\gamma,\alpha(\gamma)\}}\Big). 
$$
In particular, this together with Lemma \ref{lem:global-theory} implies that 
\begin{align*}
\big\|\Pi_{>N}v^n\big\|_{L^2}\le& \big\|\Pi_{>N}\mathcal E_{> N}(t_n)\big\|_{L^2}+\big\|\Pi_{>N}v(t_n)\big\|_{L^2}\\
\lesssim &
 \tau +N^{-\min\{2\gamma,\alpha(\gamma)\}}
 +N^{-\gamma}.
\end{align*}
Inserting this estimate into Proposition \ref{prop:R34}, we further have that
 $$
\big\|\Upsilon_3\big\|_{L^\infty_tL^2_x([0,T])}+\big\|\Upsilon_4\big\|_{L^\infty_tL^2_x([0,T])}
\le C\Big( \tau^2 +N^{-\min\{2\gamma,\alpha(\gamma)\}}+\tau N^{-\gamma+}\Big). 
$$
Hence, by \eqref{E-high} again, we obtain that for any $T>0$, 
$$
\big\|\mathcal E_{> N}\big\|_{L^\infty_tL^2_x([0,T])}\le C\Big( \tau^2 +N^{-\min\{2\gamma,\alpha(\gamma)\}}+\tau N^{-\gamma+}\Big). 
$$
In particular, this implies that 
\begin{align}\label{High-fre-conv}
\max_{1\le n\le L}  \|\Pi_{> N}u(t_n)-\Pi_{> N}u^{n}\|_{L^2}
  \le C\Big( \tau^2 +N^{-\min\{2\gamma,\alpha(\gamma)\}}+\tau N^{-\gamma+}\Big).
\end{align}

Now combining with the estimates \eqref{Low-fre-conv} and \eqref{High-fre-conv}, we finish the proof of Proposition \ref{prop:main}
\subsection{From \eqref{NuSo-NLS-1} to \eqref{NuSo-NLS-2}}

Now we give the sketch (the details are omitted since it can be followed by the same manner) how we obtain \eqref{NuSo-NLS-2} from \eqref{NuSo-NLS-1}.
First, we use the following formulas:
\begin{align*}
\Pi_{>N}\big[\fe^{i\tau\partial_x^2-4\pi i\lambda M_0\tau}u\big]
&=\fe^{i\frac\tau2\partial_x^2}\fe^{-i\lambda \tau \big|\fe^{i\frac\tau2\partial_x^2}\Pi_{\le N}u\big|^2-2\pi i\lambda M_0\tau}\Pi_{>N}\fe^{i\frac\tau2\partial_x^2}u\\
&\quad -\fe^{i\frac\tau2\partial_x^2}\left[\fe^{-i\lambda \tau\big|\fe^{i\frac\tau2\partial_x^2}\Pi_{\le N}u\big|^2+2\pi i\lambda M_0\tau}-1\right]\fe^{-4\pi i\lambda M_0\tau}\Pi_{>N}\fe^{i\frac\tau2\partial_x^2}u.
\end{align*}
Note that 
\begin{align*}
 &-\fe^{i\frac\tau2\partial_x^2}\left[\fe^{-i\lambda \tau\big|\fe^{i\frac\tau2\partial_x^2}\Pi_{\le N}u\big|^2+2\pi i\lambda M_0\tau}-1\right]\fe^{-4\pi i\lambda M_0\tau}\Pi_{>N}\fe^{i\frac\tau2\partial_x^2}u\\
 \approx &i\lambda \tau \fe^{-4\pi i\lambda M_0\tau}\fe^{i\frac\tau2\partial_x^2}\left[\big|\fe^{i\frac\tau2\partial_x^2}\Pi_{\le N}u\big|^2-2\pi i\lambda M_0\right]\Pi_{>N}\fe^{i\frac\tau2\partial_x^2}u\\
  \approx &i\lambda  \fe^{-4\pi i\lambda M_0\tau}\int_0^\tau\fe^{is\partial_x^2}\left[\big|\fe^{is\partial_x^2}\Pi_{\le N}u\big|^2-2\pi i\lambda M_0\right]\Pi_{>N}\fe^{is\partial_x^2}u.
\end{align*}
Then from Lemma \ref{lem:tri-est}, it has a nice control as  $N^{-\min\{2\gamma,\alpha(\gamma)\}}$. Therefore, we can replace 
$$
\Pi_{>N}\big[\fe^{i\tau\partial_x^2-4\pi i\lambda M_0\tau}u\big]
$$
by 
\begin{align}\label{P1}
\fe^{i\frac\tau2\partial_x^2}\fe^{-i\lambda \tau \big|\fe^{i\frac\tau2\partial_x^2}\Pi_{\le N}u\big|^2-2\pi i\lambda M_0\tau}\Pi_{>N}\fe^{i\frac\tau2\partial_x^2}u,
\end{align}
Furthermore, we can drop the first filter $\Pi_\tau$ in the low frequency component, that is, replacing 
$$
\Pi_\tau\fe^{i\frac\tau2\partial_x^2}\mathcal N_\tau\big[\fe^{i\frac\tau2\partial_x^2}\Pi_\tau u^n\big]
$$
by 
\begin{align}\label{P2}
\fe^{i\frac\tau2\partial_x^2}\mathcal N_\tau\big[\fe^{i\frac\tau2\partial_x^2}\Pi_\tau u^n\big].
\end{align}
See \eqref{R1-a} and Proposition \ref{prop:R0} for its estimate. 
Therefore, combining with \eqref{P1} and \eqref{P2}, we  get \eqref{NuSo-NLS-1}. 

%

\vskip .4in
\section*{Acknowledgements}
The author was partially supported by the NSFC (No. 12171356).
The author would also like to express his deep gratitude to  Professor Enrique Zuazua for his very helpful private discussion, and many valuable suggestions.

\vskip 25pt

\bibliographystyle{model1-num-names}

\end{document}